\newtheorem{theorem}{Theorem}[section]
\newtheorem{corollary}[theorem]{Corollary}
\newtheorem{lemma}[theorem]{Lemma}
\newtheorem{claim}{Claim}[theorem]
\theoremstyle{definition}
\newtheorem{definition}[theorem]{Definition}
\newtheorem{question}[theorem]{Question}
\DeclareMathOperator{\Ult}{Ult}
\newcommand{\image}{\mathbin{\hbox{\tt\char'42}}}
\newcommand{\plus}{{+}}
\newcommand{\plusplus}{{{+}{+}}}
\newcommand{\Mbar}{{\overline{M}}}
\newcommand{\Union}{\bigcup}
\newcommand{\union}{\cup}
\newcommand{\of}{\subseteq}
\newcommand{\lt}[1]{{\smalllt}#1}
\newcommand{\lesseq}[1]{{\smallleq}#1}
\newcommand{\smallleq}{\mathrel{\mathchoice{\raise2pt\hbox{$\scriptstyle\leq$}}{\raise1pt\hbox{$\scriptstyle\leq$}}{\raise1pt\hbox{$\scriptscriptstyle\leq$}}{\scriptscriptstyle\leq}}}
\newcommand{\smalllt}{\mathrel{\mathchoice{\raise2pt\hbox{$\scriptstyle<$}}{\raise1pt\hbox{$\scriptstyle<$}}{\raise0pt\hbox{$\scriptscriptstyle<$}}{\scriptscriptstyle<}}}
\newcommand{\ltkappa}{{{\smalllt}\kappa}}
\newcommand{\leqkappa}{{{\smallleq}\kappa}}
\newcommand{\GCH}{{\rm GCH}}
\newcommand{\ORD}{\mathop{{\rm ORD}}}
\newcommand{\ZFC}{{\rm ZFC}}
\newcommand{\p}{\mathbb{P}}
\newcommand{\q}{\mathbb{Q}}
\newcommand{\la}{\langle}
\newcommand{\ra}{\rangle}
\newcommand{\her}[1]{H_{{#1}^+}}
\newcommand{\tail}{\text{tail}}
\newcommand{\forces}{\Vdash}
\newcommand{\pset}{\mathcal P}
\newcommand{\M}{\mathcal M}
\newcommand{\prop}{\mathscr P}
\newcommand{\restrict}{\upharpoonright}
\newcommand{\mless}{\mathrel{\vartriangleleft}}
\newcommand{\mgre}{\mathrel{\vartriangleright}}
\newcommand{\U}{\mathcal U}
\newcommand{\W}{\mathcal W}
\newcommand{\oR}{o_{\textup{Ram}}}
\newcommand{\oSR}{o_{\textup{stRam}}}
\newcommand{\oSuR}{o_{\textup{supRam}}}
\DeclareMathOperator{\dom}{dom}
\title{A Mitchell-like order for Ramsey and Ramsey-like cardinals}
\author{Erin Carmody}
\address[E. Carmody]{Nebraska Wesleyan University, Mathematics Department, 5000 Saint Paul Avenue Lincoln, NE 68504-2794}
\email{ecarmod2@NebrWesleyan.edu}
\author{Victoria Gitman}
\address[V. Gitman]{The City University of New York, CUNY Graduate Center, Mathematics Program, 365 Fifth Avenue, New York, NY 10016}
\email{vgitman@nylogic.org}
\urladdr{http://boolesrings.org/victoriagitman}
\author{Miha Habi\v c}
\address[M. Habi\v c] {The City University of New York, CUNY Graduate Center, Mathematics Program, 365 Fifth Avenue, New York, NY 10016}
\email{mhabic@gradcenter.cuny.edu}
\begin{document}
\begin{abstract}
Smallish large cardinals $\kappa$ are often
characterized by the existence of a collection of filters on $\kappa$, each of which is
an ultrafilter on the subsets of $\kappa$ of some transitive $\ZFC^-$-model of size $
\kappa$. We introduce a Mitchell-like order for
Ramsey and Ramsey-like cardinals, ordering such collections of small filters. We show
that the Mitchell-like order and the resulting notion of rank have all the desirable
properties of the Mitchell order on normal measures on a measurable cardinal. The
Mitchell-like order behaves robustly with respect to forcing constructions. We show that
extensions with cover and approximation properties cannot increase the rank of a Ramsey
or Ramsey-like cardinal. We use the results about extensions with cover and approximation
properties together with recently developed techniques about soft killing of
large-cardinal degrees by forcing to softly kill the ranks of Ramsey and Ramsey-like
cardinals.
\end{abstract}
\maketitle
\section{Introduction}
Mitchell introduced the \emph{Mitchell order} on normal measures on a measurable cardinal
$\kappa$ in~\cite{mitchell:mitchellRank}, where he defined that $U\mless W$ for two
normal measures $U$ and $W$ on $\kappa$ whenever $U\in\Ult(V,W)$, the ultrapower of the
universe $V$ by $W$. Since $\mless$ is easily seen to be well-founded, we can define the
ordinal rank $o(U)$ of a normal measure and define $o(\kappa)$, the Mitchell rank of
$\kappa$, to be the supremum of $o(U)$ over all normal measures $U$ on $\kappa$. The
Mitchell rank of $\kappa$ tells us to what extent measurability is reflected below
$\kappa$.
Mitchell used the Mitchell order to study coherent sequences of normal measures,
which allowed him to generalize Kunen's $L[U]$ construction to canonical inner models
with many measures (cf.~\cite{mitchell:mitchellRank}). The Mitchell rank of a measurable
cardinal has also proved instrumental in calibrating consistency strength of set
theoretic assertions. Gitik showed, for instance, that the consistency strength of a
measurable cardinal at which the $\GCH$ fails is a measurable cardinal $\kappa$ with
$o(\kappa)=\kappa^\plusplus$ (cf.~\cite{gitik:measurablenotCH}).
The notion of Mitchell order
generalizes to extenders, where it has played a role in constructions of core models.

In this article, we introduce a Mitchell-like order for Ramsey and Ramsey-like cardinals.
Although we tend to associate smaller large
cardinals $\kappa$ with combinatorial definitions, many of them have characterizations in
terms of existence of elementary embeddings. The domains of these embeddings are
\emph{weak $\kappa$-models}, transitive models of $\ZFC^-$ of size $\kappa$ and height
above $\kappa$, or
some stronger version of these such as \emph{$\kappa$-models}, which are additionally
closed under $\ltkappa$-sequences. Usually, the embeddings are ultrapower or extender
embeddings by mini-measures or mini-extenders that apply only to the $\kappa$-sized
domain of the embedding. If $M$ is a transitive model of $\ZFC^-$ and $\kappa$ is a
cardinal in $M$, then we call $U\subseteq \pset(\kappa)^M$ an
$M$-ultrafilter\footnote{The notation $\pset(\kappa)^M$ is meant to denote
\(\pset(\kappa)\cap M\), whether or not this is actually an
element of $M$.} if it is an ultrafilter on $\pset(\kappa)^M$ that is normal for
sequences in $M$. In most interesting cases, an $M$-ultrafilter is external to $M$, but
we can still form the ultrapower by using functions on $\kappa$ that are elements of  $M$.
A prototypical characterization of a smaller large cardinal $\kappa$ states that every $A
\subseteq\kappa$ is an element of a weak $\kappa$-model $M$ (with additional requirements) for
which there is an $M$-ultrafilter on $\kappa$ (with additional requirements). The
additional requirements on $M$ and the $M$-ultrafilter are dictated by the large cardinal
property. The simplest such characterization belongs to weakly compact cardinals, where
there is the minimal requirement on the $M$-ultrafilter, namely that the ultrapower of
$M$ is well-founded.

Given a large-cardinal property $\prop$ with an embedding characterization as discussed
above (such as weak compactness, Ramseyness, etc.), let us say that an $M$-ultrafilter is
a \emph{$\prop$-measure} if it, together with $M$, witnesses $\prop$ and that a
$\prop$-measure is \emph{$A$-good} for some $A\subseteq\kappa$ if $A\in M$.\footnote{For
technical reasons we also require that $V_\kappa\in M$. Note that if $M$ is a
$\kappa$-model, then $V_\kappa\in M$ follows.}
To avoid having to specify which model $M$ we associate to a given $\prop$-measure $U$,
we will always associate it with a unique minimal model $M_U$, namely the
$\her{\kappa}$ of any such model \(M\). Let us say that a collection $\U$ of $\prop$-
measures is a \emph{witness} for $\prop$ if for every $A\subseteq\kappa$, it contains
some $A$-good $\prop$-measure. So while a normal measure on $\kappa$ witnesses the
measurability of $\kappa$, a witness collection of $\prop$-measures is precisely what
witnesses $\prop$ for one of these smaller large cardinals. This suggests that a
reasonable Mitchell-like order should not be comparing the tiny $\prop$-measures, but
rather witness collections of $\prop$-measures in a way that ensures that the
corresponding rank $o_{\prop}(\kappa)$ of $\kappa$ measures the extent to which $\prop$
is reflected below $\kappa$. We will call this order the M-order in honor of Mitchell.

\begin{definition}[M-order]
Suppose that $\kappa$ has a large-cardinal property $\prop$ with an
embedding characterization. Given two witness collections $\U$ and $\W$ of
$\prop$-measures, we define that $\U\mless \W$ if
\begin{enumerate}
\item For every $W\in \W$ and $A\subseteq\kappa$ in the ultrapower $N_W$ of $M_W$ by $W$,
there is an $A$-good $U\in\U\cap N_W$ such that \(N_W\models\text{``\(U\) is an \(A\)-good
\(\prop\)-measure on \(\kappa\)''}\).

\item $\mathcal U\subseteq \Union_{W\in\W}N_W$.
\end{enumerate}
\end{definition}

The key part of the definition is clause (1). It states that the elements of \(\U\)
witness that \(\kappa\) retains the property \(\prop\)
in the ultrapowers by the elements of \(\W\). It is tempting to say that \(\U\) itself
should witness \(\prop\) in those ultrapowers, but note that \(\U\) is too large
to be an element of a weak \(\kappa\)-model. Clause (2) ensures that the collections
of under consideration do not contain superfluous \(\prop\)-measures.

Mitchell proved that Ramsey cardinals have an embedding characterization and Gitman used
generalizations of it to define the Ramsey-like cardinals: \emph{$\alpha$-iterable},
\emph{strongly Ramsey}, and \emph{super Ramsey cardinals} (cf.~\cite{mitchell:ramsey} and
\cite{gitman:ramsey}).
Thus, a \emph{Ramsey measure} $U$ is a weakly amenable $\omega_1$-intersecting $M_U$-
ultrafilter, a \emph{strongly Ramsey measure} $U$ is a weakly amenable $M_U$-ultrafilter,
where $M_U$ is a $\kappa$-model, and a \emph{super Ramsey measure} is a weakly amenable
$M_U$-ultrafilter where $M_U$ is a $\kappa$-model elementary in $\her{\kappa}$.

We will show that the M-order and the corresponding notion of M-rank share all the
desirable features of the Mitchell order on normal measures on a measurable cardinal.
For example, the order is transitive and well-founded. Note that since an ultrapower of
a weak $\kappa$-model has size at most $\kappa$, the M-rank of a large cardinal
\(\kappa\) can be at most $\kappa^\plus$, in contrast with the upper bound of
\((2^\kappa)^+\) in the case of the usual Mitchell rank for a measurable cardinal.

\begin{theorem}\label{thm:RankInUltrapower}
Suppose $\U$ is a witness collection of $\prop$-measures, where $\prop$ is Ramsey or Ramsey-like, such that $o_{\prop}(\U)\geq\alpha$. Then:
\begin{enumerate}
\item For every $U\in \U$, the ultrapower $N_U$ of $M_U$ by $U$ satisfies $o_{\prop}(\kappa)\geq\alpha$.
\item There is a good collection $\W$ with $o_{\prop}(\W)=\alpha$ such that
$N_W\models o_{\prop}(\kappa)=\alpha$ for all $W\in\W$.
\end{enumerate}
\end{theorem}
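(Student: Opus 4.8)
The plan is to prove both clauses by simultaneous induction on $\alpha$, since the exact-rank collections furnished by clause~(2) at stages $\beta<\alpha$ are precisely what is needed to verify clause~(1) at stage $\alpha$. Throughout, the one structural feature common to all cases ($\prop$ Ramsey, strongly Ramsey, or super Ramsey) that I would lean on is weak amenability of the measures: if $U$ is weakly amenable then $\pset(\kappa)^{M_U}=\pset(\kappa)^{N_U}$, so $V$ and $N_U$ agree on which $A\of\kappa$ can appear in the M-order, and clause~(2), $\U\of\Union_{W\in\W}N_W$, guarantees that the members of a lower collection genuinely live inside the relevant ultrapowers rather than being lost upon passing inside them. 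The basic unwinding is this: if $\mathcal V\mless\U$ and $U\in\U$, then clause~(1) of the M-order, read at $W=U$, says precisely that $\mathcal V_U:=\mathcal V\cap N_U$ contains, for every $A\in\pset(\kappa)^{N_U}$, an $A$-good measure that $N_U$ recognizes as a $\prop$-measure; thus $\mathcal V_U$ is a witness collection for $\prop$ at $\kappa$ from the standpoint of $N_U$.

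For clause~(1), fix $U\in\U$ and $\beta<\alpha$ and use $o_\prop(\U)\geq\alpha$ to choose $\mathcal V\mless\U$ with $o_\prop(\mathcal V)\geq\beta$. By the unwinding above, $\mathcal V_U$ is a witness collection inside $N_U$, and the crux is to see that $N_U\models o_\prop(\mathcal V_U)\geq\beta$. I would establish this by transporting the external rank of $\mathcal V$ to an internally computed one: given any $\mathcal V'\mless\mathcal V$ with $o_\prop(\mathcal V')\geq\gamma$ for some $\gamma<\beta$, the relation that $N_U$ computes as $\mathcal V'_U\mless\mathcal V_U$ reduces, via clause~(1) of $\mathcal V'\mless\mathcal V$ evaluated at the members of $\mathcal V_U$ together with the $\pset(\kappa)$-agreement for the sub-ultrapowers $N_V$ (for $V\in\mathcal V_U$), to data that $N_U$ can verify, while $N_U\models o_\prop(\mathcal V'_U)\geq\gamma$ follows from the induction hypothesis one rank down. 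Iterating the rank computation gives $N_U\models o_\prop(\mathcal V_U)\geq\beta$, hence $N_U\models o_\prop(\kappa)\geq\beta+1$; taking the supremum over $\beta<\alpha$ yields $N_U\models o_\prop(\kappa)\geq\alpha$.

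For clause~(2), the lower bound $N_W\models o_\prop(\kappa)\geq\alpha$ is immediate from clause~(1) as soon as $\W$ has rank exactly $\alpha$, so the real content is the matching upper bound $N_W\models o_\prop(\kappa)\leq\alpha$. Rather than take an arbitrary rank-$\alpha$ collection I would build $\W$ deliberately: using the inductive form of clause~(2), fix for each $\beta<\alpha$ a collection $\W_\beta$ of rank exactly $\beta$ whose ultrapowers compute $o_\prop(\kappa)=\beta$, and then assemble $\W$ out of $A$-good measures $W$ (one supplying each $A\of\kappa$) chosen so that inside every $N_W$ the only available $\prop$-measures are those occurring in the $\W_\beta$ for $\beta<\alpha$. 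Because weak amenability pins down $\pset(\kappa)^{N_W}$, and hence the entire supply of candidate $\prop$-measures in $N_W$, one can certify that every witness collection that is an element of $N_W$ is built from measures of rank $<\alpha$ and so has rank $<\alpha$; this caps $o_\prop(\kappa)^{N_W}$ at $\alpha$, and with the lower bound gives equality. One checks separately, using well-foundedness of the M-order, that the assembled $\W$ has rank exactly $\alpha$.

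The step I expect to be the genuine obstacle is the transfer of the M-order and its rank between $V$ and the ultrapowers $N_U$ in clause~(1), and the dual control of what appears in $N_W$ in clause~(2). Unlike the classical Mitchell order, where $V$ and $\Ult(V,U)$ agree on a tall initial segment of the cumulative hierarchy and the rank function is outright absolute, here the models $M_U$, the measures on them, and their own ultrapowers are all objects of size $\kappa$ that $N_U$ reflects only partially. Showing that $\mathcal V'\mless\mathcal V$ localizes correctly to the version computed in $N_U$, that the restricted collection $\mathcal V_U$ (or a suitable $N_U$-definable refinement of it) is recognized by $N_U$ as an element carrying the right rank, and that no spurious higher-rank witness collection materializes inside $N_W$, is exactly where weak amenability and clause~(2) must be pushed hardest. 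This is also why clause~(1) yields only the inequality $o_\prop(\kappa)\geq\alpha$: equality cannot be expected for an arbitrary collection, and the honestly separate construction in clause~(2) is what realizes it.
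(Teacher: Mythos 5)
There is a genuine gap, and it is concentrated in your clause~(2). You propose to assemble \(\W\) from measures \(W\) ``chosen so that inside every \(N_W\) the only available \(\prop\)-measures are those occurring in the \(\W_\beta\) for \(\beta<\alpha\),'' justified by the claim that weak amenability ``pins down \(\pset(\kappa)^{N_W}\), and hence the entire supply of candidate \(\prop\)-measures in \(N_W\).'' This does not work: weak amenability fixes \(\pset(\kappa)^{N_W}=\pset(\kappa)^{M_W}\), but a \(\prop\)-measure is a \emph{subset} of \(\pset(\kappa)\), i.e.\ an element of \(\pset(\pset(\kappa))^{N_W}\), and nothing in the choice of \(W\) lets you exclude unwanted (in particular, internally high-rank) measures from its ultrapower. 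The paper obtains the exact-rank measures by an entirely different device (Theorem~\ref{th:wellBehavedCollections}): supposing that for some \(A\of\kappa\) every \(A\)-good measure \(U\) with \(N_U\models\oR(\kappa)\geq\alpha\) in fact has \(N_U\models\oR(\kappa)>\alpha\), it assembles from the ultrapowers a witness collection \(\W_1\mless\W_0\) all of whose measures still overshoot \(\alpha\), repeats, and contradicts well-foundedness of \(\mless\). Your separate ``check by well-foundedness that \(\W\) has rank exactly \(\alpha\)'' is not a substitute: well-foundedness is what \emph{produces} the exact-rank measures, not a final sanity check on a collection built by a control mechanism that does not exist.

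The second missing idea is the non-absoluteness of being \(\omega_1\)-intersecting, which weak amenability (your announced main tool) cannot repair. For \(\prop\) Ramsey, an ultrapower \(N_W\) is a weak \(\kappa\)-model that may lack countable sequences, so it can believe a filter is a Ramsey measure when externally it is not. Every step of your induction that assembles a collection in the outer model out of measures found \emph{inside} ultrapowers --- which is exactly what your clause~(2) does, and what your clause~(1) needs at lower stages inside \(N_U\) --- therefore may fail to yield a witness collection of genuine Ramsey measures. The paper's solution is to introduce \emph{certified} Ramsey measures (generated by an unbounded \(I\of\kappa\), hence absolutely \(\kappa\)-intersecting), to prove the rank equivalence for these first (Lemma~\ref{lem:rankCertifiedMeasures}), and then to show, via good indiscernibles for \(L_\kappa[A^*]\), product ultrafilters, and the set \(X=\{\xi<\kappa\mid\oR(\xi)\geq g^E(\xi)\}\), that certified measures of any attained rank exist (Theorem~\ref{th:rankMeasures}(2)); this construction is the technical heart of the result and has no counterpart in your outline. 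Two further points you leave unaddressed also matter: the induction must be stated for arbitrary \emph{practical} models, not just \(V\), or else ``the induction hypothesis one rank down'' cannot legitimately be applied inside \(N_U\); and \(\mathcal{V}\cap N_U\) need not be an \emph{element} of \(N_U\) (the paper instead has \(N_U\) re-collect internally the measures with the required properties, and uses representing functions \(g^E\) even to make sense of \(N_W\models\oR(\kappa)\geq\beta\) when \(\kappa\leq\beta<\kappa^+\)). Your clause~(1), suitably relativized and with internal re-collection, does follow the paper's lines; but without the certification machinery and with the flawed route to exact-rank collections, the proposal does not go through.
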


\noindent We should not expect an analogue of Theorem~\ref{thm:RankInUltrapower}~(1) with
equality because we are now dealing with collections of measures instead of a single
measure and so Theorem~\ref{thm:RankInUltrapower}~(2) is the best possible result.

\begin{theorem}
Any strongly Ramsey cardinal $\kappa$ has the maximum Ramsey M-rank
$\oR(\kappa)=\kappa^\plus$, any super Ramsey cardinal $\kappa$ has the maximum strongly
Ramsey M-rank $\oSR(\kappa)=\kappa^\plus$, and any measurable cardinal $\kappa$ has the
maximum super Ramsey M-rank $\oSuR(\kappa)=\kappa^\plus$.
\end{theorem}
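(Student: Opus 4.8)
The plan is to prove all three statements by a common template. Write $\prop$ for the weaker and $\prop^+$ for the stronger property in each of the pairs $(\text{Ramsey},\text{strongly Ramsey})$, $(\text{strongly Ramsey},\text{super Ramsey})$, and $(\text{super Ramsey},\text{measurable})$. Since the M-rank of $\kappa$ is at most $\kappa^+$ (as observed before Theorem~\ref{thm:RankInUltrapower}), it suffices to prove the lower bound $o_{\prop}(\kappa)\ge\kappa^+$, that is, to produce for every $\alpha<\kappa^+$ a $\prop$-witness collection of M-rank at least $\alpha$. First I would record that each $\prop^+$-measure is, or canonically yields, a $\prop$-measure, so that $\kappa$ genuinely has $\prop$ and $\prop^+$-witness collections are $\prop$-witness collections: a weakly amenable ultrafilter on a $\kappa$-model is automatically $\omega_1$-intersecting because $\kappa$-models are closed under $\omega$-sequences; a model $M\prec\her{\kappa}$ is in particular a $\kappa$-model; and if $\kappa$ is measurable with normal measure $U_0$ then $U_0\cap M$ is a weakly amenable $M$-ultrafilter for every $\kappa$-model $M\prec\her{\kappa}$.

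The heart of the argument is a lifting step. Using the extra strength of $\prop^+$, I would construct a $\mless$-increasing sequence $\langle\W_\alpha:\alpha<\kappa^+\rangle$ of $\prop$-witness collections with $o_{\prop}(\W_\alpha)\ge\alpha$, built so that for every $W\in\W_{\alpha+1}$ the ultrapower $N_W$ both satisfies ``$\kappa$ has $\prop$'' and contains, for each of its (at most $\kappa$-many) subsets $A$ of $\kappa$, an $A$-good member of $\W_\alpha$ that $N_W$ recognizes as a $\prop$-measure. The $\prop^+$-strength is what makes the ultrapowers rich enough: in the measurable case I would take the top measures to be restrictions $U_0\cap M$ of a fixed normal measure to suitable $\kappa$-models $M\prec\her{\kappa}$ and exploit the factor embedding $k\colon N_{U_0\cap M}\to\Ult(V,U_0)$ together with the agreement $\her{\kappa}^{\Ult(V,U_0)}=\her{\kappa}^V$ to see that $\kappa$ remains super Ramsey in $N_{U_0\cap M}$; in the other two cases the weakly amenable ultrafilter on the (elementary) $\kappa$-model already has an ultrapower retaining the weaker property, by weak amenability together with the preservation of $\kappa$-modelhood (respectively of $\prec\her{\kappa}$) into the ultrapower. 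By a Skolem-hull construction I would close each top model under the map $A\mapsto(\text{an }A\text{-good member of }\W_\alpha)$ so that $\W_\alpha\cap N_W$ is an internal $\prop$-witness collection for $\kappa$ as computed in $N_W$. This arrangement forces clause~(1) of the M-order and hence $\W_\alpha\mless\W_{\alpha+1}$, giving $o_{\prop}(\W_{\alpha+1})>o_{\prop}(\W_\alpha)\ge\alpha$; limit stages are handled by the same construction applied to the chain built so far, whose internal reflection of ``$o_{\prop}(\kappa)\ge\lambda$'' is available from the lower stages and Theorem~\ref{thm:RankInUltrapower}. Taking the supremum over $\alpha<\kappa^+$ then yields $o_{\prop}(\kappa)=\kappa^+$.

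The main obstacle is precisely the coherent realization of clause~(1): the definition of $\mless$ requires that, inside each ultrapower $N_W$ with $W\in\W_{\alpha+1}$, the members of $\W_\alpha$ that happen to lie in $N_W$ already constitute a $\prop$-witness collection \emph{recognized by $N_W$}, so the ultrapowers must simultaneously (a) see $\kappa$ as having $\prop$ and (b) absorb an $A$-good member of $\W_\alpha$, with the correct internal recognition, for each of their $\kappa$-many subsets $A$ of $\kappa$. The recognition in (b) needs ``being a $\prop$-measure'' to be absolute between $N_W$ and $V$; this is unproblematic for Ramsey and strongly Ramsey, whose definitions quantify only over ($\kappa$-)models and ultrafilters, but for super Ramsey the defining clause $M_U\prec\her{\kappa}$ is not internally absolute, so there I expect to lean on $\her{\kappa}^{\Ult(V,U_0)}=\her{\kappa}^V$ and transfer elementarity along the factor embedding. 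Balancing the richness of $N_W$ needed to absorb $\kappa$-many lower witnesses against the size-$\kappa$ bound on the ultrapower, which is also what keeps clause~(2) satisfiable and the entire rank below $\kappa^+$, is the delicate bookkeeping that the construction must get right.
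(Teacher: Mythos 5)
Your template works for the third statement but breaks down precisely where the real content lies, namely at the Skolem-hull step for the strongly Ramsey and super Ramsey cases. When $\kappa$ is measurable you can indeed close a hull $M\prec\langle\her{\kappa},\in,U_0\rangle$ under a choice function $B\mapsto(\text{a }B\text{-good member of }\W_\alpha)$ and still have a measure on it, because $U_0\cap M$ is the restriction of a single global object that is available for \emph{every} suitable hull. But strong and super Ramseyness provide no such global object: they assert only that every $A\subseteq\kappa$ lies in \emph{some} $\kappa$-model carrying a weakly amenable ultrafilter, with no control over which models these are; indeed the paper recalls that demanding such an ultrafilter for \emph{every} weak $\kappa$-model is outright inconsistent. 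So you cannot ``close each top model under the map $A\mapsto(\text{an }A\text{-good member of }\W_\alpha)$'' and expect the closed model still to carry a measure. The only technique available for producing measure-carrying models with prescribed closure properties (Lemma~\ref{lem:omegaspecial}) is to take unions of $\omega$-chains of elementary submodels of a \emph{fixed} measure-carrying model $\overline{M}$ and restrict its fixed measure; but then the sets you want to absorb must already be elements of $\overline{M}$, so you need $\overline{M}$ to contain a $B$-good member of $\W_\alpha$ for each of \emph{its own} subsets $B$ of $\kappa$ --- exactly the fixed-point problem you started with, pushed up one level. Your proposal never breaks this circularity, so the successor step, and with it the whole recursion of length $\kappa^+$, does not go through for the first two statements.

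The paper's proof avoids the recursion entirely, using two ideas your outline is missing. First, the witnesses inside an ultrapower are not pre-built members of some $\W_\alpha$ absorbed from outside; they are generated \emph{internally from the top measure itself}: if $U$ is a strong Ramsey measure, then $N_U$ is a $\kappa$-model, so the restrictions $U\cap M$ to unions of $\omega$-chains of elementary submodels $M\prec M_U=\her{\kappa}^{N_U}$ (with the restricted filters folded into the chain) are elements of $N_U$, and they witness inside $N_U$ an intermediate property the paper calls \emph{weakly super Ramsey} (super Ramsey with the $\kappa$-model requirement dropped). Second, a reflection lemma replaces your induction: if $\kappa$ is weakly super Ramsey and one assumes $\oR(\kappa)=\alpha<\kappa^+$, then since the rank is computable in $\her{\kappa}$ (Corollary~\ref{cor:approxMitchellRank} and Corollary~\ref{cor:MitchellRank}) and the witnessing models are elementary in $\her{\kappa}$, every such model --- and hence every ultrapower, these having the same subsets of $\kappa$ by weak amenability --- satisfies ``for all $\beta<\alpha$ there are Ramsey measures of rank $\beta$,'' which forces $\oR(\kappa)>\alpha$, a contradiction. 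Thus every strong Ramsey measure's ultrapower satisfies $\oR(\kappa)=\kappa^+$ in one shot, and witness collections of every rank below $\kappa^+$ then follow from Theorem~\ref{th:rankMeasures}; the other two statements are proved by mimicking this argument one level up. Without this pair of ideas --- internal generation of witnesses plus reflection via elementarity in $\her{\kappa}$ --- your chain construction cannot be completed.
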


We will show that the new Mitchell order behaves robustly with respect to forcing
constructions. We show that extensions with cover and approximation properties cannot
create new Ramsey or Ramsey-like cardinals or increase their M-rank. Hamkins showed,
in~\cite{hamkins:coverandapproximations}, that most large cardinals cannot be created in
extensions with cover and approximation properties and we provide several modifications
of his techniques to the embeddings characterizing Ramsey and Ramsey-like cardinals. This
result is of independent interest since it was not previously known whether Ramsey
cardinals can be created in extensions with cover and approximation properties.

\begin{theorem}
If $V\subseteq V'$ has the $\delta$-cover and $\delta$-approximation properties for some
regular cardinal $\delta<\kappa$ of $V'$, then
$o_{\prop}^V(\kappa)\geq o_{\prop}^{V'}(\kappa)$,
where $\prop$ is strongly or super Ramsey, and if we additionally assume that
$V^\omega\subseteq V$ in $V'$, then $\oR^V(\kappa)\geq \oR^{V'}(\kappa)$.
\end{theorem}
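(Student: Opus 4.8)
The plan is to establish the stated inequality $o^{V'}_\prop(\kappa)\le o^V_\prop(\kappa)$ by isolating a single transfer lemma and then reassembling witness collections inside $V$ by induction on M-rank. The heart of the argument, adapting Hamkins's techniques from \cite{hamkins:coverandapproximations} to the small models at hand, is the following: \emph{every $\prop$-measure $U$ of $V'$ (for $\prop$ strongly or super Ramsey; for $\prop$ Ramsey assuming also $V^\omega\of V$ in $V'$) already lies in $V$, its model $M_U$ lies in $V$, and $U$ remains a $\prop$-measure in $V$ with the very same ultrapower $N_U$.} Note the converse may fail, since a model that is a $\kappa$-model in $V$ need not remain $<\kappa$-closed in $V'$; this asymmetry is exactly why one obtains the inequality rather than equality.

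First I would prove the transfer lemma. For strongly and super Ramsey, $M_U$ is a $\kappa$-model, hence closed under $<\kappa$-sequences in $V'$; thus any $z\in V'$ with $z\of\pset(\kappa)^{M_U}$ and $|z|<\delta$ already lies in $M_U$, and weak amenability gives $U\cap A\in M_U$ for a $\kappa$-sized (in $M_U$) family $A\in M_U$ containing $z$, so $U\cap z\in M_U\of V$. Combined with the $\delta$-cover property, which reduces an arbitrary $z\in V$ of size $<\delta$ to one sitting inside $M_U$, this shows every $\delta$-approximation of $U$ is in $V$, whence the $\delta$-approximation property yields $U\in V$; a parallel argument, coding $M_U$ by a relation on $\kappa$, places $M_U\in V$. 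Since $M_U$, $U$, and the witnessing functions then all lie in $V$, the ultrapower $N_U$ is computed identically in $V$ and $V'$ and the first-order and normality conditions defining a $\prop$-measure are absolute; for super Ramsey one checks in addition that $M_U\prec\her\kappa$ persists, using that the cover property forces $\her\kappa^V$ and $\her\kappa^{V'}$ to agree on the relevant reflection. For the Ramsey case $M_U$ is only a weak $\kappa$-model and the closure argument is unavailable, so I would instead transfer the $\omega_1$-intersection property and the well-foundedness of the $\omega$-step iterated ultrapowers; this is precisely where $V^\omega\of V$ enters, guaranteeing that the countable sequences witnessing intersection and any countable descending sequence that would witness ill-foundedness are absolute between $V$ and $V'$.

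Granting the lemma, I would run an induction on $\alpha$ showing $o^{V'}_\prop(\kappa)\ge\alpha$ implies $o^V_\prop(\kappa)\ge\alpha$. Because every $\prop$-measure of $V'$ is a $\prop$-measure of $V$ with identical $N_U$, the predicate ``$N_U\models o_\prop(\kappa)\ge\gamma$'' is absolute between the two models. For the inductive step I would reassemble collections inside $V$: given a $V'$-witness collection $\W$ of rank $\alpha$ and, for each $\beta<\alpha$, a collection $\U\mless\W$ with $o^{V'}(\U)\ge\beta$, I would replace them by the $V$-definable collections $\W^V_\gamma=\{U: U\text{ is a }\prop\text{-measure with }N_U\models o_\prop(\kappa)\ge\gamma\}$, which are genuine sets of $V$ and are witness collections there (every $A\of\kappa$ in $V$ is in $V'$, so $\W$ supplies an $A$-good member, which Theorem~\ref{thm:RankInUltrapower}~(1) and the transfer lemma place into the appropriate $\W^V_\gamma$). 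The relation $\mless$ among these reassembled collections, and hence their M-ranks, can then be computed in $V$ from the absoluteness of the ultrapowers and of the displayed rank predicate, with Theorem~\ref{thm:RankInUltrapower} matching the ``$N_U\models o_\prop(\kappa)\ge\gamma$'' bookkeeping to genuine M-rank.

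The main obstacle I expect is the transfer lemma, and within it two points. First, pushing the full model $M_U$ (not merely the ultrafilter $U$) into $V$ via the approximation property: the natural coding of $M_U$ as a subset of $\kappa$ uses a bijection that a priori lives only in $V'$, so one must arrange the approximations to be $V$-definable from data already shown to lie in $V$. Second, the Ramsey case, where the absence of $<\kappa$-closure forces the entire argument to be rerouted through the iterability and well-foundedness characterization, making the hypothesis $V^\omega\of V$ indispensable and the absoluteness bookkeeping markedly more delicate than in the strongly and super Ramsey cases.
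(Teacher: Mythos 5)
Your proposed transfer lemma---that every $\prop$-measure $U$ of $V'$, together with its model $M_U$, already lies in $V$ and remains a $\prop$-measure there with the same ultrapower---is false, and since both your base case and your inductive reassembly lean on it, the argument collapses. Take $V'=V[G]$, a forcing extension by a nontrivial poset of size less than $\delta$; by the paper's own remarks this is the prototypical pair with the $\delta$-cover and $\delta$-approximation properties. The paper's definition of a $\prop$-measure requires $V_\kappa\in M_U$, which in $V'$ means $V_\kappa^{V'}\in M_U$; since the forcing adds a new bounded subset of $\kappa$ (the generic itself, coded as a subset of a small ordinal), we have $V_\kappa^{V'}\notin V$, hence $M_U\notin V$ by transitivity of $V$. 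The measure $U$ itself also fails to transfer: $M_U$ contains a new subset of $\kappa$, and $U$ must contain either that set or its complement, so $U\not\subseteq V$. In particular, the hypothesis of the $\delta$-approximation property (that $X\subseteq V$) is not even satisfied when you try to apply it to $U$, and your step ``so $U\cap z\in M_U\subseteq V$'' silently assumes $M_U\subseteq V$, which is exactly what fails. So no measure of $V'$ transfers, the predicate ``$N_U\models o_{\prop}(\kappa)\geq\gamma$'' cannot be made absolute for \emph{the same} measures, and the asymmetry you describe (that $V$-measures need not survive into $V'$) is not the actual source of the inequality.

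What the cover and approximation machinery is actually for, and what the paper does, is extraction rather than transfer: given $A\in V$, pick in $V'$ an $\{A,V_\kappa^V\}$-good measure $W$ with ultrapower $j:M_W\to N_W$, and pass to the pair $N=j(V_\kappa^V)\subseteq N'=j(V_\kappa^{V'})$, which inherits the $\delta$-cover and $\delta$-approximation properties by elementarity. Hamkins-style arguments then show $N=V\cap N'$, $N\in V$, and $\overline U=W\cap N\in V$ (here the approximation property \emph{is} legitimately applicable, because the elements of $W\cap N$ lie in $N\subseteq V$), and finally one cuts down to a $\kappa$-model $M\prec\her{\kappa}^N$ in $V$ with measure $\overline U\cap M$. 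The witnessing measure in $V$ is thus a new one extracted from $W$, not $W$ itself, which is also why only an inequality of ranks can be expected. For the rank inequality, the paper formulates the induction for arbitrary pairs of practical models with the cover and approximation properties, precisely so that the induction hypothesis can be applied to the internal pair $N\subseteq N'$ living inside the ultrapower; your induction, phrased only for the fixed pair $V\subseteq V'$, has no way to make this move. Finally, the hypothesis $V^\omega\subseteq V$ in the Ramsey case is used quite differently from what you propose: not to transfer iterability or $\omega_1$-intersection, but to guarantee that the weak $\kappa$-model $M=\Union_{n<\omega}M_n$ and filter $U=\Union_{n<\omega}U_n$, assembled in $V'$ as $\omega$-unions of elements of $\her{\kappa}^N\subseteq V$, actually belong to $V$.
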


Using the results about extensions with approximation and cover properties together with
new techniques recently developed in Carmody's dissertation \cite{carmody:thesis} about
softly killing degrees of large cardinals with forcing, we show how to softly kill the
M-rank of a Ramsey or Ramsey-like cardinal by forcing.

\begin{theorem}
If $\kappa$ has $o_{\prop}(\kappa)=\alpha$, where $\prop$ is Ramsey or Ramsey-like and $
\beta<\alpha$, then there is a cofinality-preserving forcing extension in
which $o_{\prop}(\kappa)=\beta$.
\end{theorem}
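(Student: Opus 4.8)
The plan is to adapt the soft-killing machinery of \cite{carmody:thesis}: I would force with a reverse Easton iteration $\p$ whose generic adds a subset $A\of\kappa$ coding the target ordinal $\beta$, and then verify the two inequalities $o_\prop(\kappa)\ge\beta$ and $o_\prop(\kappa)\le\beta$ separately in the extension $V[G]$. First I would work under $\GCH$ (which may be assumed after a preliminary cofinality-preserving forcing, since the lifting argument below together with the cover and approximation theorem shows that forcing leaves $o_\prop(\kappa)$ unchanged), and then, using $o_\prop(\kappa)=\alpha>\beta$ together with the well-foundedness of the M-order, fix a witness collection $\W$ realizing rank $\beta$; by Theorem~\ref{thm:RankInUltrapower}~(2) I may moreover assume $N_W\models o_\prop(\kappa)=\beta$ for every $W\in\W$. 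The iteration $\p$ forces with $\Add(\gamma,1)$ at suitable stages $\gamma\le\kappa$ using Easton support, so that it factors below $\kappa$ as a forcing of size less than some regular $\delta<\kappa$ followed by a $\delta$-closed forcing, adds no new $\omega$-sequences (hence $V^\omega\of V$ holds in $V[G]$, as needed for the Ramsey case), is $\kappa^+$-cc, and under $\GCH$ preserves all cofinalities.

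For the lower bound $o_\prop(\kappa)\ge\beta$ in $V[G]$ I would lift the embeddings witnessing rank $\beta$. Fixing an $\mless$-increasing tower of witness collections cofinal in rank $\beta$ below $\W$, I would lift each associated ultrapower $j_W\colon M_W\to N_W$ through $\p$ in the standard way: $M_W$ and $N_W$ agree sufficiently on $\p$ below $\kappa$, and the tail of $j_W(\p)$ is closed enough in $N_W$ to build a master condition and a generic by diagonalization, after which the $\prop$-measures transfer to $V[G]$. The substantive point is that lifting preserves the M-order, so that the lifted collections again form an $\mless$-increasing tower of length $\beta$; here I would use that the coded value carried by $A$ at $\kappa$ survives into each lifted ultrapower, so that clause~(1) of the M-order keeps holding and the tower continues to witness $o_\prop(\kappa)\ge\beta$.

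The upper bound $o_\prop(\kappa)\le\beta$ is where I expect the real difficulty. Suppose toward a contradiction that some witness collection $\U$ in $V[G]$ has rank $\gamma>\beta$. Since $\U$ is a witness it contains an $A$-good $U$, so $A\in M_U$ and hence $A\in N_U$, while by Theorem~\ref{thm:RankInUltrapower}~(1) the ultrapower satisfies $N_U\models o_\prop(\kappa)\ge\gamma>\beta$. The design of the coding is meant to refute this: the set $A$ certifies that $V[G]$ arises over a ground configuration by a forcing with a closure point at the regular $\delta<\kappa$, so that the cover and approximation theorem applies \emph{inside} $N_U$ and bounds $o_\prop(\kappa)$ there by the rank of that configuration, which $A$ has pinned to $\beta$; thus $N_U\models o_\prop(\kappa)\le\beta$, a contradiction. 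Arranging $A$ so that this bound is genuinely transported into every such $N_U$ — so that the cover and approximation analysis is uniformly available and the reading-off of rank inside $N_U$ is well-founded — is the delicate heart of the proof. Finally I would note that the three large-cardinal cases are treated uniformly: for strongly and super Ramsey $\kappa$ the $\ltkappa$-closure of the $\kappa$-models furnishes the cover and approximation hypotheses, whereas for Ramsey $\kappa$ the $\ltkappa$-closure of $\p$ supplies the extra hypothesis $V^\omega\of V$ required to invoke the theorem, which, combined with the two inequalities, yields $o_\prop(\kappa)=\beta$ in the cofinality-preserving extension $V[G]$.
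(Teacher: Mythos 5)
Your lower-bound argument (lifting ultrapowers through the iteration) is sound in spirit and close to what the paper does, but your upper bound has a genuine gap, and it is fatal to the forcing you chose. An Easton-support iteration of posets like $\Add(\gamma,1)$ cannot decrease the M-rank at all: the very lifting argument you give for the lower bound applies verbatim to witness collections of \emph{every} rank below $\alpha$ (nothing in the lifting uses that the rank being lifted is at most $\beta$), so in your extension one still has $o_{\prop}(\kappa)=\alpha$. Symmetrically, the cover and approximation theorem only ever yields $o_{\prop}^{V[G]}(\kappa)\leq o_{\prop}^{V}(\kappa)=\alpha$: it compares the extension to the actual ground model, whose rank is $\alpha$, and there is no ``ground configuration of rank $\beta$'' anywhere for it to compare against --- producing such a thing is precisely the step you defer as ``the delicate heart,'' and Cohen-style coding supplies no mechanism that actively destroys the measures whose ultrapowers compute rank $\geq\beta$.

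The paper's mechanism is different and is the missing idea. Fix a well-order $E$ of $\kappa$ of type $\beta$ with representing function $g^E$, and force to shoot a club $C$, by closed initial segments, through $\{\delta<\kappa\mid \oR(\delta)<g^E(\delta)\}$, doing the same at every inaccessible $\gamma\leq\kappa$ along an Easton-support iteration (this is what preserves cofinalities and makes the tail forcing strategically closed). The upper bound is then immediate and purely combinatorial: any Ramsey measure $U$ in the extension whose model contains $C$ and $E$ has $C\in U$, since $C$ is a club, hence $\kappa\in j(C)$ by normality, hence $N_U\models\oR(\kappa)<j(g^E)(\kappa)=\beta$; by Corollary~\ref{cor:approxMitchellRank} this caps the rank at $\beta$. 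The cover and approximation machinery enters not in the upper bound, where you placed it, but inside the \emph{lower} bound: when lifting an ultrapower $h\colon M\to N$ of an $\omega$-special model with $N\models\oR(\kappa)=\eta<\beta$, one must verify that $C\cup\{\kappa\}$ is still a condition in $h(\q_\kappa)$, i.e.\ that $\oR(\kappa)^{N[h(G)]}<\beta$, and that is exactly what Theorem~\ref{th:rankDecreasesRamsey} gives when applied to the pair $N\subseteq N[h(G)]$, which has a closure point at the first inaccessible and adds no new $\omega$-sequences. (Your lifting sketch also glosses over the fact that weak $\kappa$-models do not admit generics outright; the paper needs $\omega$-special models, the diagonalization criterion of Lemma~\ref{le:diag2}, and Lemma~\ref{le:ramseycountablyclosed} to keep the lifted measure $\omega_1$-intersecting.) Without the club --- with Cohen coding in its place --- there is no master condition to verify and no reflection to exploit, so the upper bound cannot be completed.
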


Although the general framework of the M-order we have sketched here applies to many
smallish large cardinals, we focus in this paper on its application to Ramsey, strongly
Ramsey and super Ramsey cardinals. Other instances of it definitely warrant further
research.

\section{Preliminaries}\label{sec:prelim}
\subsection{Properties of Ramsey and Ramsey-like cardinals}
As discussed in the introduction, many large cardinals $\kappa$ below a measurable
cardinal have the prototypical characterization, where for every $A\subseteq\kappa$,
there is a weak $\kappa$-model $M$, with some additional properties, containing $A$ for
which there is an $M$-ultrafilter on $\kappa$, with some additional properties, where the
additional properties are what distinguishes the different large cardinal properties.
Formally, if $M$ is a transitive model of $\ZFC^-$, then an $M$-ultrafilter is a
collection $U\subseteq \pset(\kappa)^M$ such that the structure $\la M,\in, U\ra$, with a
predicate for $U$, satisfies that $U$ is a normal ultrafilter on $\kappa$.\footnote{We
will always assume that an ultrafilter on a cardinal $\kappa$ contains the tail sets and
therefore is non-principal. It also follows from this assumption that an $M$-ultrafilter
is $\kappa$-complete for sequences in $M$.} We can form the ultrapower of a model $M$ by
an $M$-ultrafilter using functions on $\kappa$ that are elements of $M$, but the
ultrapower may not be well-founded. Let us call \(U\), an $M$-ultrafilter,
\emph{$\delta$-intersecting} for a cardinal $\delta$ if every collection of fewer than
\(\delta\) many sets from it has non-empty intersection.\footnote{In the literature
such $M$-ultrafilters are often called $\delta$-complete which we find confusing because $\delta$-complete ultrafilters are supposed to have the property that the intersection of fewer than $\delta$-many sets in the ultrafilter is itself in the ultrafilter. But in the situation of $M$-ultrafilters, the intersection may not even be an element of $M$.}
Standard arguments show that the ultrapower of a model $M$ by an $\omega_1$-intersecting
$M$-ultrafilter is well-founded, but, in contrast with the case of actual ultrafiters,
this condition is not necessary.

Many set theoretic constructions use iterated ultrapowers by a measure on a measurable
cardinal. If $U$ is an ultrafilter on some set, then the ultrapower (of $V$) construction
with it can be iterated along the ordinals by taking the ultrapower by the image of the
previous stage's ultrafilter at successor stages and direct limits at limit stages.
Gaifman showed that if an ultrafilter is countably complete, which is equivalent to
having a well-founded ultrapower, then all its iterated ultrapowers are well-founded
(cf.~\cite{gaifman:ultrapowers}). For an $M$-ultrafilter, to be able to define the
successor stage ultrafilters in the iterated ultrapower construction, it must be at least
partially internal to $M$, a property that is captured by the notion of weak amenability.
An $M$-ultrafilter is \emph{weakly amenable} if for every $X\in M$ of size at most $
\kappa$ in $M$, the intersection $X\cap U$ is in $M$.\footnote{The property is a
weakening of the usual definition of amenability because we restrict to $X$ of size at
most $\kappa$ in $M$.} Although weak amenability allows us to define all the iterated
ultrapowers, it does not have any bearing on their well-foundedness. Kunen showed that
being $\omega_1$-intersecting is sufficient for well-foundedness
(cf.~\cite{kunen:ultrapowers}), but it is not necessary.
Unlike measures on $\kappa$, where
either all the iterated ultrapowers are well-founded or none are, we will see below that
it is consistent to have $M$-ultrafilters with exactly $\alpha$-many well-founded
iterated ultrapowers for any countable ordinal $\alpha$.\footnote{It follows from
Gaifman's arguments in \cite{gaifman:ultrapowers} for ultrapowers by a measure that an
$M$-ultrafilter with $\omega_1$-many well-founded iterated ultrapowers, already has all
well-founded iterated ultrapowers.}

Recall that $\kappa$ is weakly compact if and only if $\kappa^{\ltkappa}=\kappa$ and
every $A\subseteq\kappa$ is contained in a weak $\kappa$-model for which there is an
$M$-ultrafilter on $\kappa$ with a well-founded ultrapower. This characterization can be
strengthened in a number of significant ways. For instance, we can assume that $M$ is a
$\kappa$-model that is elementary in $\her{\kappa}$ and hence reflects $V$ to a certain
extent. In fact, we can assume that \emph{every} weak $\kappa$-model $M$ has an
$M$-ultrafilter with a well-founded ultrapower. Surprisingly, we cannot strengthen the
characterization of Ramsey cardinals in the same fashion. Recall now that $\kappa$ is
Ramsey if and only if every $A\subseteq\kappa$ is contained in a weak $\kappa$-model $M$
for which there is a weakly amenable $\omega_1$-intersecting $M$-ultrafilter on $\kappa$.
If we strengthen the characterization to say that $M$ is a $\kappa$-model, then we get what
Gitman calls a \emph{strongly Ramsey} cardinal, which she showed is a stationary limit of
Ramsey cardinals.\footnote{Note that a weakly amenable $M$-ultrafilter for a $\kappa$-model
$M$ is automatically $\omega_1$-intersecting.}
Indeed, even if we just assume that $M$ is closed under countable sequences, then we already
get a large cardinal, call it $\omega$-\emph{closed Ramsey}, that is a stationary limit of
Ramsey cardinals and, which, as we will see in Section~\ref{sec:MitchellOrder}, has the
maximum Ramsey M-rank. Strongly Ramsey cardinals can also be viewed as quite strong because
they are limits of the completely Ramsey cardinals defined by Feng
in~\cite{feng:completelyRamsey}. If we further strengthen the characterization to say that
$M$ is elementary in $\her{\kappa}$, we get a \emph{super Ramsey} cardinal that is in turn a
stationary limit of strongly Ramsey cardinals (but weaker than a measurable cardinal).
Assuming that \emph{every} weak $\kappa$-model $M$ has a weakly amenable
$\omega_1$-intersecting $M$-ultrafilter turns out to be inconsistent. For details,
see~\cite{gitman:ramsey}.

The requirement that the $M$-ultrafilters are weakly amenable already takes us well beyond
weak compactness. If every $A\subseteq\kappa$ is contained in a weak $\kappa$-model $M$ for
which there is a weakly amenable $M$-ultrafilter with a well-founded ultrapower, then
$\kappa$ is a stationary limit of completely ineffable cardinals, which sit atop a hierarchy
of ineffability. The following is a very useful characterization of weak amenability. If $U$
is an $M$-ultrafilter on $\kappa$ and $j:M\to N$ is the ultrapower by $U$, then $U$ is
weakly amenable if and only if $\pset(\kappa)^M=\pset(\kappa)^N$. Moreover, if $j:M\to N$ is
any embedding with critical point $\kappa$ and $\pset(\kappa)^M=\pset(\kappa)^N$, then the
$M$-ultrafilter $U$ obtained from $j$ in the usual way, is weakly amenable. We can stratify
weakly amenable $M$-ultrafilters by degrees of iterability. Let us say that an
$M$-ultrafilter is $\alpha$-\emph{iterable} if it has $\alpha$-many well-founded iterated
ultrapowers and that it is \emph{iterable} if it is $\omega_1$-iterable. Gitman defined that
a cardinal $\kappa$ is $\alpha$-iterable (for $1\leq\alpha\leq\omega_1$) if every
$A\subseteq\kappa$ is contained in a weak $\kappa$-model $M$ for which there is a weakly
amenable $\alpha$-iterable $M$-ultrafilter. Gitman and Welch showed that the
$\alpha$-iterable cardinals form a hierarchy of strength (cf.~\cite{gitman:welch}) and
Sharpe and Welch showed that an $\omega_1$-Erd\H os cardinal is a limit of
$\omega_1$-iterable cardinals (cf.~\cite{welch:ramsey}).

Suppose that $M$ is a weak $\kappa$-model and $U$ is an $M$-ultrafilter. Consider the
submodel $\Mbar=\her{\kappa}^M$ consisting of all sets that have hereditary size
at most $\kappa$ in \(M\). Clearly $\Mbar$ is itself a weak $\kappa$-model and if $M$ was a
$\kappa$-model then $\Mbar$ is as well. Also, $U$ is an $\Mbar$-ultrafilter and it
retains all other relevant properties with respect to $\Mbar$ that it had with respect to
$M$, such as being weakly amenable, $\alpha$-iterable, or $\omega_1$-intersecting. The model
$\Mbar$ is the unique minimal model $M_U$ for $U$ we discussed in the introduction. Note
that the sets in $M_U$ are precisely the Mostowski collapses of well-founded binary
relations on $\kappa$ coded by sets in $U$ together with their complements, so that $M_U$
can be recovered from $U$ in any model of a sufficient fragment of set theory. In future
arguments, we will only consider $M_U$-ultrafilters $U$, where $V_\kappa\in M_U$ so that the
ultrapower $N_U$ thinks that $V_{j(\kappa)}$ exists. For such a weakly amenable \(U\),
$M_U$ is precisely $\her{\kappa}^{N_U}$ and so must be an element of $N_U$.

It turns out that if $U$ is an iterable $M_U$-ultrafilter, then $U$ also codes a weak
$\kappa$-model $M^*_U$ of full $\ZFC$ so that it is also an iterable $M^*_U$-ultrafilter.
Specifically, we can take $M_U^*=V_{j(\kappa)}^{N_U}$, where $j:M_U\to N_U$ is the
ultrapower map of $M_U$ by $U$. The ultrapower map of $M_U^*$ by $U$ is the restriction
$j^*:M_U^*\to N_U^*$ of the ultrapower of $N_U$ by $U$. If $M_U$ was a $\kappa$-model, then
so is $M^*_U$. The embedding $j^*$ has several useful properties, such as
$M_U=\her{\kappa}^{M_U^*}$, that
$M^*_U=V_{j^*(\kappa)}^{N_U^*}$ is in $N^*_U$, and $M_U^*\prec N_U^*$. The same construction
cannot be carried out with a partially iterable $M_U$-ultrafilter because the iterability of
$U$ decreases when you pass to the model $M_U^*$. Indeed, assuming that there are
$\alpha$-iterable ultrafilters for models of $\ZFC$ produces a stronger notion than an
$\alpha$-iterable cardinal.

For forcing constructions with Ramsey cardinals, which we discuss below, we will need to
make some additional assumptions on the weak $\kappa$-model $M$. We define that a weak
$\kappa$-model $M$ is $\omega$-\emph{special} if it is the union of an elementary chain of
(not necessarily transitive) substructures
\[
\kappa\in M_0\prec M_1\prec \cdots\prec M_n\prec\cdots
\]
for $n<\omega$ such that each $M_n\in M$ and $|M_n|^M=\kappa$.\footnote{In fact we will not
require the elementary chain requirement in the definition.}
The ultrapower $N$ of an
$\omega$-special weak $\kappa$-model $M$ by a weakly amenable $M$-ultrafilter on $\kappa$ is
$\omega$-special as witnessed by the sequence $\la X_n\mid n<\omega\ra$, where
\begin{displaymath}
X_n=\{j(f)(\kappa)\mid f:\kappa\to M_n, f\in M_n\},
\end{displaymath}
and if $M=M_U$ then $M_U^*$ is $\omega$-special as well. (See Lemmas~2.7 and 2.9 of
\cite{gitmancody:eastonramsey}.)

\begin{lemma}\label{lem:omegaspecial}
Suppose $\kappa$ is Ramsey. Then every $A\subseteq\kappa$ is contained in an
$\omega$-special weak $\kappa$-model $M$ for which there is a weakly amenable
$\omega_1$-intersecting $M$-ultrafilter on $\kappa$.
\end{lemma}

\begin{proof}
Fix $A\subseteq\kappa$ and choose some weak $\kappa$-model $\overline M$ containing $A$ for
which there is a weakly amenable $\omega_1$-intersecting $\overline M$-ultrafilter $\overline U$ on
$\kappa$. Let $\overline N$ be the ultrapower of $\overline M$ by $\overline U$. We can
assume that $\kappa$ is the largest cardinal of $\overline M$ and therefore
$\overline M=\her{\kappa}^{\overline N}$ is an element of $\overline N$. Working in
$\overline N$, let $M_0$ be any transitive elementary submodel of $\her{\kappa}$ of size
$\kappa$ with $A\in M_0$. Since $\overline N$ and $\overline M$ have the same subsets of
$\kappa$, $M_0$ and hence $U_0=M_0\cap \overline U$ are in $\overline M$. So working in
$\overline N$, we can choose a transitive $M_1\prec \her{\kappa}$ of size $\kappa$ with
$M_0, U_0\in M_1$. Continuing in this fashion, we obtain a sequence
$\la (M_n,U_n)\mid n<\omega\ra$. Let $M=\Union_{n<\omega}M_n$ and $U=\Union_{n<\omega}U_n$.
The model $M$ is $\omega$-special as witnessed by the sequence $\la M_n\mid n<\omega\ra$
(the $M_n$ are even transitive) and $U=M\cap \overline U$ is $\omega_1$-intersecting and
weakly amenable by construction.
\end{proof}

Since \(M_U^*\) is \(\omega\)-special whenever $M_U$ is, it follows that if $\kappa$ is
Ramsey, then every $A\subseteq\kappa$ is contained in an $\omega$-special weak
$\kappa$-model $M\models\ZFC$ for which there is a weakly amenable $\omega_1$-intersecting
$M$-ultrafilter on $\kappa$.

\subsection{Forcing constructions}
Suppose $\p\subseteq V_\kappa$ is a poset and we would like to verify that $\kappa$ is
Ramsey in a forcing extension $V[G]$ by $\p$. Since every $A\subseteq\kappa$ in $V[G]$ has a
$\p$-name $\dot A$ of hereditary size $\kappa$ in $V$, $\dot A$ together with $\p$ can be
put into a weak $\kappa$-model $M$, which ensures that $A$ is in the weak $\kappa$-model
$M[G]$. Thus, it suffices to show that every ultrapower $j:M\to N$ of a weak $\kappa$-model
$M$ by a weakly amenable $\omega_1$-intersecting $M$-ultrafilter can be lifted to
$j:M[G]\to N[j(G)]$ so that the lift is the ultrapower by a weakly amenable
$\omega_1$-intersecting $M[G]$-ultrafilter in $V[G]$. The \emph{lifting criterion} states
that $j$ lifts to $j:M[G]\to N[H]$ with $H=j(G)$ if and only if $j\image G\subseteq H$.
In this setting, when constructing a generic filter, we usually work with a \(\kappa\)-model
\(M\) and a poset \(\p\) that is $\lt\kappa$-closed in $M$. This suffices for the existence
of an \(M\)-generic filter for \(\p\). Instead of this approach, which does not apply to
weak \(\kappa\)-models, we will use the following
\emph{diagonalization criterion}, introduced in \cite{gitman:ramseyindes}.

\begin{lemma}[Diagonalization criterion, \cite{gitman:ramseyindes}]\label{le:diag2}
If $M$ is an $\omega$-special weak $\kappa$-model and $\p$ is a $\leqkappa$-distributive
poset in $M$, then there is an $M$-generic filter for $\p$.
\end{lemma}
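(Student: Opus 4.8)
The plan is to exploit the $\omega$-special decomposition $M=\Union_{n<\omega}M_n$ in order to replace the naive transfinite construction of a descending $\kappa$-sequence of conditions (which would stall at limit stages, where no lower bound need exist) by a construction of length only $\omega$, using $\leqkappa$-distributivity to collapse the up-to-$\kappa$ many dense sets living inside each $M_n$ into a single dense set. First I would record two routine reductions. Since $\p\of V_\kappa$ and $\p\in M$ with $M$ transitive, every condition of $\p$ already lies in $M$. And since, for a filter, meeting a dense set $D$ is equivalent to meeting its downward closure (which $M$ can form, being a model of $\ZFC^-$, and which is dense open), it suffices to produce a filter $G$ meeting every dense \emph{open} subset of $\p$ that is an element of $M$.

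The key step is, for each $n<\omega$, to set
\[
E_n=\bigcap\{D\in M_n \mid D \text{ is a dense open subset of } \p\}.
\]
Because $M_n\in M$ is a set of size $\kappa$ in $M$, the collection of dense open subsets of $\p$ lying in $M_n$ is a $\leqkappa$-sized family definable in $M$ from $M_n$ and $\p$; hence $E_n$ is computed in $M$, and so $E_n\in M$. Applying the $\leqkappa$-distributivity of $\p$ \emph{inside} $M$ to this family shows that $E_n$ is dense (and it is open, being an intersection of open sets). Since the chain is increasing, $M_n\of M_{n+1}$, so $E_{n+1}\of E_n$; in particular each $E_n$ is a dense open subset of $\p$ belonging to $M$.

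With the $E_n$ in hand I would build a descending sequence $p_0\geq p_1\geq\cdots$ of conditions with $p_n\in E_n$: pick $p_0\in E_0$, and given $p_n$ use the density of $E_{n+1}$ (which holds for genuine conditions, all of which lie in $M$, so $M$'s verdict is absolute here) to find $p_{n+1}\leq p_n$ with $p_{n+1}\in E_{n+1}$. Let $G$ be the upward closure of $\{p_n\mid n<\omega\}$; this is a filter, since the $p_n$ are downward directed. To see that $G$ is $M$-generic, let $D\in M$ be any dense open subset of $\p$. As $M=\Union_{n<\omega}M_n$, we have $D\in M_n$ for some $n$, whence $E_n\of D$ and therefore $p_n\in E_n\of D$, so $p_n\in G\cap D$.

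The main obstacle, and the whole reason both hypotheses appear, is concentrated in the definition of $E_n$: the $\omega$-special structure is precisely what allows the construction to terminate after $\omega$ steps, bypassing the dangerous limit stages of cofinality up to $\kappa$, while $\leqkappa$-distributivity is exactly what is needed to bundle the up-to-$\kappa$ many dense open sets inside a single $M_n$ into one dense set $E_n\in M$. Verifying that $E_n$ is simultaneously dense and an element of $M$ is therefore the crux; everything else is bookkeeping. Consistent with the footnote to the definition of $\omega$-special, note that the elementarity of the chain is never used: only that the $M_n$ are increasing with union $M$, that each $M_n\in M$, and that $|M_n|^M=\kappa$.
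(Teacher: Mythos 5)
Your proof is correct, and since the paper states this lemma without proof (citing \cite{gitman:ramseyindes}), the right comparison is with the standard argument from that reference, which is exactly what you have reproduced: intersect the $\leqkappa$-many dense open sets inside each $M_n$ using distributivity in $M$ to get a single dense open $E_n\in M$, thread a descending $\omega$-sequence through the $E_n$, and take the upward closure. One cosmetic point: your opening claim $\p\of V_\kappa$ is neither a hypothesis nor needed, since $\p\in M$ together with the transitivity of $M$ already puts every condition of $\p$ inside $M$.
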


\noindent The lift of an ultrapower embedding is always an ultrapower embedding by the
ultrafilter $W$ obtained from the lift and we will usually use a direct argument to verify
that $M[G]$ and $N[j(G)]$ have the same subsets of $\kappa$ (which demonstrates weak
amenability). To show that $W$ is $\omega_1$-intersecting, we have the following lemma.

\begin{lemma}[Gitman, Johnstone \cite{gitman:ramseyindes}]\label{le:ramseycountablyclosed}
Suppose that $M$ is a weak $\kappa$-model and $j:M\to N$ is the ultrapower by an
$\omega_1$-intersecting $M$-ultrafilter on $\kappa$. Suppose further that $\p\in M$ is a
countably closed forcing notion and $G\of\p$ is $M$-generic. If the ultrapower map $j$ lifts
to $j:M[G]\to N[j(G)]$, then the lift $j$ is the ultrapower map by an
$\omega_1$-intersecting $M[G]$-ultrafilter in $V[G]$.
\end{lemma}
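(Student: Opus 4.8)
The plan is to first identify the $M[G]$-ultrafilter that the lifted embedding is an ultrapower by, and then to verify that it is $\omega_1$-intersecting; only the latter is substantive, since the general fact recorded just before the lemma already tells us that the lift, which I continue to denote by $j$, is the ultrapower of $M[G]$ by the derived ultrafilter
\[
W=\{X\in\pset(\kappa)^{M[G]}\mid\kappa\in j(X)\}.
\]
The lift still has critical point $\kappa$, and since $j\restrict M$ is the original ultrapower by $U$, the ultrafilters $W$ and $U$ agree on $\pset(\kappa)^M$: for $Y\in M$ we have $Y\in W$ iff $\kappa\in j(Y)$ iff $Y\in U$. This agreement is what will let me feed the $\omega_1$-intersecting property of $U$ into an argument about $W$.

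So I would fix a sequence $\la X_n\mid n<\omega\ra\in V[G]$ with each $X_n\in W$, pick $\p$-names $\dot X_n\in M$, and aim to produce an ordinal in $\bigcap_{n<\omega}X_n$. Here I make the first use of countable closure of $\p$: such forcing adds no new $\omega$-sequences of ground-model sets, so the externally chosen sequence $\la\dot X_n\mid n<\omega\ra$, together with every countable family of $M$-sets I extract from it below, already lies in $V$. This is exactly what is needed to invoke the $\omega_1$-intersecting property of $U$, which is a statement about arbitrary countable subfamilies of $U$ lying in $V$, not merely those lying in $M$. I would then reduce the whole problem to finding a single $U$-\emph{master condition}, that is, a condition $p\in G$ with $\{\xi<\kappa\mid p\forces\check\xi\in\dot X_n\}\in U$ for every $n<\omega$. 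Granting such a $p$, the family $\la\{\xi\mid p\forces\check\xi\in\dot X_n\}\mid n<\omega\ra$ is a countable subfamily of $U$ lying in $V$, so by $\omega_1$-intersecting of $U$ there is a single $\xi^*$ in its intersection; since $p\in G$ and $p\forces\check\xi^*\in\dot X_n$ for each $n$, we get $\xi^*\in X_n$ for all $n$ and hence $\xi^*\in\bigcap_{n<\omega}X_n$.

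The construction of the master condition is where countable closure does its essential work, and securing it is the step I expect to be the main obstacle. Starting from $\kappa\in j(X_n)=j(\dot X_n)_{j(G)}$, each $X_n\in W$ yields a witness in $j(G)$ forcing $\check\kappa\in j(\dot X_n)$, and pulling this back through $j$ produces conditions in $G$ that positively $U$-decide $\dot X_n$ on larger and larger coordinate sets. The plan is to thread these through the filter $G$ into a single descending sequence of conditions deciding $\dot X_n$ for more and more $n$, and then to take a lower bound. Countable closure of $\p$ guarantees that such a descending $V$-sequence of conditions has a lower bound in $\p$; the delicate point is to argue that the lower bound can be found \emph{inside} the generic filter $G$, so that it actually forces the desired memberships. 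This coordination of the external $\omega_1$-intersecting property of $U$ with membership in the generic filter $G$, made possible precisely by the countable closure of $\p$, is the crux on which the argument turns.
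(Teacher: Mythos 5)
You should know at the outset that the paper itself contains no proof of this lemma: it is quoted verbatim from Gitman--Johnstone \cite{gitman:ramseyindes}, so I am measuring your attempt against the argument that is actually needed, not against anything printed above. Your preliminary steps are fine: the lift is the ultrapower by the derived ultrafilter $W$, $W\cap\pset(\kappa)^M=U$, and by countable closure the sequence of names $\la \dot X_n\mid n<\omega\ra$ may be taken in $V$. The fatal problem is your reduction to a ``$U$-master condition,'' i.e.\ a single $p\in G$ with $Y_n^p=\{\xi<\kappa\mid p\forces\check\xi\in\dot X_n\}\in U$ for every $n$. If such a $p\in G$ existed, then each $X_n$ would contain the set $Y_n^p\in U$; so your method can only ever succeed when every member of $W$ contains a member of $U$, that is, when $W$ is merely the filter generated by $U$ on $\pset(\kappa)^{M[G]}$. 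This fails in general, and it fails in the one place the present paper applies the lemma: there $\p=\p_\kappa*\dot\q_\kappa$, and the club $C=\bigcup g$ added by the last factor is an element of $M[G][g]$ lying in $W$ (the master condition $\overline C=C\cup\{\kappa\}$ lies in the lifted generic and forces $\check\kappa\in h(\dot C)$), while a routine density argument shows that $C$ contains no unbounded ground-model subset of $\kappa$, in particular no member of $U$. So for any countable sequence with $X_0=C$ there is \emph{no} condition of the kind you seek, and no amount of threading conditions through $G$ can produce one. Relatedly, your phrase ``pulling this back through $j$'' is not a real operation: a witness $q\in j(G)$ with $q\forces\check\kappa\in j(\dot X_n)$ is typically not of the form $j(p)$ (in the application it is essentially $\overline C$), and the fact that $j(G)$ is a filter only makes $q$ compatible with each $j(p)$, which yields nothing. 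Ironically, the point you flag as the crux --- finding a lower bound of a countable descending sequence \emph{inside} $G$ --- is the unproblematic part: for countably closed $\p$ and $V$-generic $G$ this follows from a standard density argument.

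A correct proof must handle sets in $W$ that contain no set in $U$, and it does so with two ideas absent from your attempt, both exploiting that $G$ is generic over $V$ (your argument never uses more than $M$-genericity, which is a warning sign). First, the largeness is \emph{diagonal} rather than uniform: using the structure of the lifted generic (in these constructions $j(G)$ is closed under countable descending sequences lying in $V[G]$, being built from a $V$-generic filter for a countably closed poset together with filters generated by descending sequences via the diagonalization criterion), one finds a single $m\in j(G)$ forcing $\check\kappa\in j(\dot X_n)$ for all $n$, and writes $m=j(h)(\kappa)$ for some $h\colon\kappa\to\p$ in $M$. By the ultrapower computation, each $Y_n=\{\xi\mid h(\xi)\forces\check\xi\in\dot X_n\}$ lies in $U$, with $\la Y_n\mid n<\omega\ra\in V$; here $U$-many $\xi$ each have their \emph{own} condition $h(\xi)$ doing the forcing, which is exactly what survives in examples like the club $C$, where no single condition can serve. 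Second, one lands in $G$ by genericity, not by lower bounds: no $p\in G$ can satisfy $j(p)\perp m$, since $j(p)$ and $m$ both lie in the filter $j(G)$; hence there is $p^*\in G$ none of whose extensions $p_0$ has $j(p_0)\perp m$, which translates to $\{\xi\mid h(\xi)\text{ is compatible with }p_0\}\in U$ for every $p_0\leq p^*$. Consequently $D=\{q\mid \exists\xi\in\bigcap_n Y_n\ (q\leq h(\xi))\}$ is dense below $p^*$ --- given $p_0\leq p^*$, intersect the countably many $U$-sets $\{\xi\mid h(\xi)\parallel p_0\}$ and $Y_n$, all coded in $V$, using the $\omega_1$-intersecting property of $U$ --- and $D\in V$, so $G$ meets $D$. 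Then $h(\xi^*)\in G$ for the corresponding $\xi^*$, and since $h(\xi^*)\forces\check\xi^*\in\dot X_n$ for every $n$, we get $\xi^*\in\bigcap_n X_n$. Your write-up would need to be rebuilt around these two points; as it stands, the central reduction is to a statement that is false.
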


\section{The M-order for Ramsey and Ramsey-like cardinals}\label{sec:MitchellOrder}
Let's start by recalling and making more precise the definitions we made in the
introduction.
Let us say that $U\subseteq \pset(\kappa)$ of size $\kappa$ is a \emph{small universe
measure} if the collection of Mostowski collapses of well-founded binary relations on
$\kappa$ coded by sets in $U$ and their complements is a weak $\kappa$-model $M_U$ such that
$V_\kappa\in M_U$ and $U$ is an $M_U$-ultrafilter. We
will write $N_U$ for the Mostowski collapse of the ultrapower of $M_U$ by $U$, provided that
the ultrapower is well-founded. We say that a small universe measure $U$ is a \emph{Ramsey
measure} if it is weakly amenable (to $M_U$) and $\omega_1$-intersecting, we say that $U$ is
a \emph{strong Ramsey measure} if it is weakly amenable and $M_U$ is a $\kappa$-model, and
we say that $U$ is a \emph{super Ramsey measure} if it is a strong Ramsey measure and
$M_U\prec\her{\kappa}$. We will carry out all the arguments below for Ramsey cardinals
since they are the most complicated, pointing out at the end that analogous or simpler
arguments work for strongly Ramsey and super Ramsey cardinals. The interested reader can
note along the way where the arguments adapt to other smallish large cardinals, which we do
not discuss here.

A first approach to defining a Mitchell-like order for Ramsey cardinals might be to consider
ordering Ramsey measures on a fixed Ramsey $\kappa$ analogously to the Mitchell order on
normal measures on a measurable cardinal.

\begin{definition}
Given two Ramsey measures $U$ and $W$ on a cardinal $\kappa$, define that $U\mless W$ if $U\in N_W$.
\end{definition}

\begin{lemma}\label{lem:measureOrder}
The relation \(\mless\) on Ramsey measures on a cardinal $\kappa$ is transitive and
well-founded.
\end{lemma}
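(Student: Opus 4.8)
The plan is to establish the two properties separately, with well-foundedness being the genuinely interesting part.

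\medskip

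\noindent\textbf{Transitivity.} Suppose $U\mless W$ and $W\mless X$, so that $U\in N_W$ and $W\in N_X$; I want $U\in N_X$. The key observation is that a Ramsey measure can be recovered from the data $(M_U,U)$ in any model of a sufficient fragment of set theory, as emphasized in the discussion of $M_U$ in the preliminaries: the model $M_U$ is precisely the collection of Mostowski collapses of well-founded binary relations on $\kappa$ coded by sets in $U$ and their complements. First I would note that, since $W\in N_X$ and $U\in N_W$, the ultrapower $N_W$ (equivalently the pair $(M_W,W)$ together with its well-founded ultrapower) is itself definable from $W$ inside $N_X$, because $N_X$ sees $W$ as a small universe measure and can internally form its ultrapower. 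The only subtlety is that forming the ultrapower $N_W$ and recognizing $U$ as an element of it must be absolute between $V$ and $N_X$. This follows because $N_X$ correctly computes $\pset(\kappa)$-sized objects and well-foundedness of the relevant ultrapowers is witnessed by $\omega_1$-intersecting-ness, a property that transfers down. Hence $U\in N_W\subseteq N_X$.

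\medskip

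\noindent\textbf{Well-foundedness.} This is where I expect the main work. The standard trick for the Mitchell order on normal measures is to assign to each measure $W$ an ordinal (e.g. via $[\mathrm{id}]_W$ type arguments or a rank function) that strictly decreases along $\mless$; I would look for the analogous decreasing ordinal invariant. The natural candidate exploits that if $U\in N_W$, then $U$ (being a small universe measure on $\kappa$ as computed in $N_W$) is an element of $N_W$, and $N_W$ is an ultrapower of a weak $\kappa$-model, hence has size at most $\kappa$ and sits at some bounded height. I would try to show that $\mless$ cannot have an infinite descending chain $\cdots\mless U_2\mless U_1\mless U_0$ by deriving a contradiction from such a chain: each $U_{n+1}\in N_{U_n}$, and I would aim to extract a strictly decreasing sequence of ordinals, for instance the ordinal heights of the models $N_{U_n}$ or the ranks of the $U_n$ as subsets of $\pset(\kappa)$, using that passing into an ultrapower of a $\kappa$-sized model cannot raise the relevant rank.

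\medskip

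The hard part will be pinning down exactly which ordinal invariant strictly decreases, since for a single small universe measure $U$ the ultrapower $N_U$ has height only somewhat above $\kappa$ rather than far above it, so a crude ordinal-height argument may not give strict decrease. I would therefore most likely mimic Mitchell's original argument directly: given a putative infinite descending $\mless$-chain, use that $U_1\in N_{U_0}$ means $N_{U_0}$ can see the entire tail $U_1\mgre U_2\mgre\cdots$ of the chain reflected into it (since each subsequent measure lives inside the previous ultrapower), and then run a reflection/minimal-counterexample argument to contradict the existence of such an internally visible infinite descending sequence. Concretely, I would take $U_0$ to be $\mless$-minimal among the members of a hypothetical infinite descending chain with respect to some auxiliary well-ordering, show the chain below it is an element of $N_{U_0}$, and obtain a shorter chain there, contradicting minimality. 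The delicate point throughout is the absoluteness of ``$U$ is a Ramsey measure'' and of the formation of $N_U$ between $V$ and the ultrapowers, which the recoverability of $M_U$ from $U$ is designed to guarantee.
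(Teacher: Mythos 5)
Your transitivity argument is essentially the paper's (the paper dismisses transitivity as straightforward), and the absoluteness point you raise --- that $N_X$ can reconstruct $M_W$ from $W$, correctly re-form its well-founded ultrapower, and hence contains $N_W$ as a transitive subset --- is the right justification. The genuine gap is in well-foundedness, where you abandon the correct idea and commit to one that fails. In your Mitchell-style reflection argument, given a descending chain $U_0\mgre U_1\mgre\cdots$, each individual $U_n$ (for $n\geq 1$) is indeed an element of $N_{U_0}$, but the \emph{sequence} $\langle U_n\mid n\geq 1\rangle$ need not be: $M_{U_0}$ is only a weak $\kappa$-model, not closed under countable sequences, and neither is its ultrapower $N_{U_0}$. (This is exactly the difference between Ramsey and strongly Ramsey measures, and the paper is elsewhere at pains to deal with the fact that these ultrapowers are missing countable sequences.) So $N_{U_0}$ cannot ``see the entire tail'' of the chain, and ill-foundedness does not reflect into it. Moreover, the minimal-counterexample step has no footing here: Mitchell's argument reflects the statement ``the order on measures on $\kappa$ is ill-founded'' through an elementary embedding of $V$, whereas your embeddings are elementary only on the size-$\kappa$ models $M_{U}$, which do not even contain $\pset(\kappa)$, let alone the Ramsey measures on $\kappa$; and an infinite descending chain has no ``shorter'' tail, so it is unclear what minimality would be contradicted.

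The irony is that the invariant you floated and then discarded in your first well-foundedness paragraph does work, and it is the paper's proof. If $U\mless W$ then, by exactly the absoluteness considerations you describe, $j_U,N_U\in N_W$; the paper then observes that $N_W\models|N_U|=|U|\leq 2^\kappa<j_W(\kappa)$ (since $j_W(\kappa)$ is inaccessible in $N_W$), and since $N_U$ is a transitive set of size less than $j_W(\kappa)$ in $N_W$ containing the ordinal $j_U(\kappa)$, it follows that $j_U(\kappa)<j_W(\kappa)$. Even more crudely, since $N_U\in N_W$ and both are transitive, one gets $\ORD\cap N_U<\ORD\cap N_W$ directly. Either way, $U\mapsto j_U(\kappa)$ (or $U\mapsto\ORD\cap N_U$) strictly decreases along $\mless$, so an infinite descending chain of measures yields an infinite descending chain of ordinals. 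Your worry that these ordinals all live ``only somewhat above $\kappa$'' is immaterial: strict decrease is all that is needed, and no reflection or minimal-counterexample machinery is necessary.
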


\begin{proof}
Transitivity is straightforward. To see that the relation is also well-founded, notice
that \(U\mless W\) implies \(j_U(\kappa)<j_W(\kappa)\), where \(j_U\) and \(j_W\) are
the ultrapower maps with respect to \(U\) and \(W\). This is so because if \(U\in N_W\)
then also \(j_U,N_U\in N_W\) and \(N_W\models |U|=|N_U|\). Now,
\(|U|^{N_W}\leq (2^\kappa)^{N_W}<j_W(\kappa)\) since \(j_W(\kappa)\) is inaccessible in
\(N_W\). Since \(j_U(\kappa)\) is an element of \(N_U\), a transitive substructure of
\(N_W\), we must have \(j_U(\kappa)<j_W(\kappa)\). The well-foundedness of \(\mless\)
now follows since an infinite decreasing chain of Ramsey measures would yield
an infinite decreasing chain of ordinals.
\end{proof}

We should also notice that no Ramsey measure \(U\) can have more than \(\kappa\)
predecessors in the order \(\mless\), since the ultrapower \(N_U\) has cardinality
\(\kappa\).

The order $\mless$ on Ramsey measures is an interesting object in its own right, but it is
not useful for defining degrees of Ramsey cardinals with the intention to capture the extent
to which Ramseyness is reflected below $\kappa$ because if $\kappa$ is Ramsey then there
are already Ramsey measures of all possible ranks $\alpha<\kappa^+$.

\begin{lemma}
If $\kappa$ is Ramsey and $\alpha<\kappa^+$, then there is a Ramsey measure on $\kappa$ of
rank at least $\alpha$ in the $\mless$-order.
\end{lemma}

\begin{proof}
Suppose inductively that for all $\beta<\alpha$, there is a Ramsey measure $U_\beta$ on
$\kappa$ whose rank in the $\mless$-order is at least $\beta$. Let $U$ be some Ramsey
measure on $\kappa$ such that $\{U_\beta\mid\beta<\alpha\}\subseteq M_U$,
which is possible since this set has hereditary size $\kappa$. Clearly the rank of $U$
in $\mless$ is at least $\alpha$.
\end{proof}

\noindent The same analysis holds for strongly Ramsey and super Ramsey measures. It is to be
expected that ordering the small universe measures is not the right analogue of the Mitchell
order for smaller large cardinals because such a cardinal is characterized by the existence
of many and not just one such measure. This brings us back to the definition of the M-order
on collections of small universe measures, which we restate here again in full generality
before going to back concrete arguments for Ramsey cardinals. Recall that a $\prop$-measure
(for some large cardinal property $\prop$ such as Ramsey, strongly Ramsey, etc.) is
$A$-\emph{good} for $A\subseteq\kappa$ if
$A\in M_U$. We call a collection of $\prop$-measures a \emph{witness collection} if it
contains at least one $A$-good $\prop$-measure for every $A\subseteq\kappa$. Then
$\kappa$ has property $\prop$ if and only if there is a witness collection of
$\prop$-measures.

\begin{definition}(M-order)
Suppose that $\kappa$ is a large cardinal with property $\prop$ having a suitable embedding
characterization. Given two witness collections $\U$ and $\W$ of $\prop$-measures, we define
that $\U\mless \W$ whenever
\begin{enumerate}
\item For every $W\in \W$ and $A\subseteq\kappa$ in the ultrapower $N_W$ of $M_W$ by $W$,
there is an $A$-good $U\in\U\cap N_W$ such that \(N_W\models\text{``\(U\) is an \(A\)-good
\(\prop\)-measure on \(\kappa\)''}\).

\item $\U\subseteq \Union_{W\in\W}N_W$.
\end{enumerate}
\end{definition}

\begin{lemma}
The $\mless$-order on witness collections of Ramsey measures on a cardinal $\kappa$ is
transitive and well-founded.
\end{lemma}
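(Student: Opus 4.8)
The plan is to treat transitivity and well-foundedness separately, in both cases reducing matters to the single-measure analysis of Lemma~\ref{lem:measureOrder} together with the absoluteness fact recorded in Section~\ref{sec:prelim}: the model $M_U$ and the ultrapower $N_U$ can be recovered from $U$ in any transitive model of a sufficient fragment of set theory, so that ``being an $A$-good Ramsey measure'' is largely absolute between transitive models containing the measure.

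For transitivity I would assume $\U\mless\W$ and $\W\mless\mathcal X$ and verify the two clauses for $\U\mless\mathcal X$. For clause~(1), fix $X\in\mathcal X$ and $A\of\kappa$ with $A\in N_X$. Clause~(1) of $\W\mless\mathcal X$ produces an $A$-good $W\in\W\cap N_X$ which $N_X$ regards as an $A$-good Ramsey measure, so in particular $A\in M_W\of N_W$. Since $W\in\W$ and $A\in N_W$, clause~(1) of $\U\mless\W$ yields an $A$-good $U\in\U\cap N_W$ with $N_W\models$``$U$ is an $A$-good Ramsey measure''. The work is then to push this up to $N_X$: as $W\in N_X$ is a genuine Ramsey measure, its ultrapower is really well-founded, hence well-founded in $N_X$ by downward absoluteness, so by absoluteness of the recovery of $N_W$ the model $N_X$ computes the true $N_W$ as an element; being transitive, $N_W\of N_X$, whence $U\in\U\cap N_X$. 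Clause~(2) is handled identically: given $U\in\U$, the hypotheses place $U\in N_W$ and $W\in N_X$ for suitable $W,X$, and the inclusion $N_W\of N_X$ just obtained gives $U\in N_X$.

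The remaining point, and the only real delicacy, is that $N_X\models$``$U$ is an $A$-good Ramsey measure''. Every defining clause except $\omega_1$-intersecting refers only to the recoverable objects $M_U,N_U$ and the fixed structure $\la M_U,\in,U\ra$, so these are absolute between transitive models containing $U$ and transfer from $N_W$ to $N_X$. The property of being $\omega_1$-intersecting is upward-problematic, since $N_X$ may have more countable subfamilies of $U$ than $N_W$; here I would exploit that $U\in\U$ is a \emph{genuine} Ramsey measure in $V$. Any subfamily of $U$ that $N_X$ deems of size ${<}\omega_1$ injects, inside $N_X$, into an ordinal below $\omega_1^{N_X}\le\omega_1^V$ and is therefore genuinely countable, so it has nonempty intersection by the real $\omega_1$-intersecting property of $U$; as nonemptiness is absolute, $N_X$ sees $U$ as $\omega_1$-intersecting. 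The same genuineness shows $U$ really is $A$-good, completing clause~(1).

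For well-foundedness I would transport the ordinal-decrease argument of Lemma~\ref{lem:measureOrder} to collections. Suppose toward a contradiction that $\cdots\mless\W_2\mless\W_1\mless\W_0$ is an infinite descending chain. Choosing any $W_0\in\W_0$ and applying clause~(1) of $\W_{n+1}\mless\W_n$ to the pair $(W_n,\emptyset)$ at each stage, I obtain measures $W_{n+1}\in\W_{n+1}\cap N_{W_n}$ that $N_{W_n}$ regards as Ramsey measures on $\kappa$. Running the computation of Lemma~\ref{lem:measureOrder} inside $N_{W_n}$ --- where $j_{W_n}(\kappa)$ is inaccessible and bounds $(2^\kappa)^{N_{W_n}}$, which in turn bounds $|N_{W_{n+1}}|$ --- yields $j_{W_{n+1}}(\kappa)<j_{W_n}(\kappa)$, so that $\la j_{W_n}(\kappa)\mid n<\omega\ra$ is an infinite descending sequence of ordinals, a contradiction. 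The main obstacle in the whole proof is the upward absoluteness in transitivity discussed above; once the recovery of $N_W$ and the genuineness of the measures in $\U$ are in hand, the rest is bookkeeping.
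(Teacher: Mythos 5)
Your proof is correct and follows essentially the same route as the paper's: transitivity is established by composing the two clauses through the inclusion $N_W\subseteq N_Z$ (using that $N_Z$ recovers $M_W$ and hence $N_W$ from $W$ by absoluteness), and well-foundedness by extracting from a descending chain of collections a descending chain of individual measures, which is impossible by Lemma~\ref{lem:measureOrder}. Your extra verification that the genuine measures in $\U$ remain Ramsey measures from the point of view of the larger ultrapower---in particular the upward transfer of the $\omega_1$-intersecting property via $\omega_1^{N_X}\leq\omega_1^V$---is a point the paper's proof leaves implicit, and you handle it correctly.
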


\begin{proof}
First, we show transitivity. Suppose that $\U,\,\W,\,\mathcal Z$ are witness collections of
Ramsey measures on $\kappa$ such that $\U\mless \W$ and $\W\mless\mathcal Z$. We have
$\U\subseteq\Union_{W\in \W} N_W\subseteq\Union_{Z\in\mathcal Z}N_Z$, where the first
inclusion follows by definition and second inclusion follows because
$\W\subseteq \Union_{Z\in\mathcal Z}N_Z$ and if $W\in \W$ is in some $N_Z$, then $N_Z$ has
$N_W$ as well. This verifies requirement (2) in showing that $\U\mless \mathcal Z$, and now
we verify (1). If $Z\in \mathcal Z$ and $A\in N_Z$, then since $\W\mless \mathcal Z$, there
is $W\in N_Z\cap \W$ with $A\in N_W$, but then, since $\U\mless \W$, there is
$U\in N_W\cap \U\subseteq N_Z\cap \U$ with $A\in N_U$. Thus, $\U\mless \mathcal Z$.

Next, suppose towards a contradiction that $\mless$ is ill-founded for witness collections of
Ramsey measures on $\kappa$ and fix a $\mless$-descending sequence
\begin{displaymath}
\U_0\mgre\U_1\mgre\cdots\mgre\U_n\mgre\cdots
\end{displaymath}
of witness collections. Let $U_0$ be any element of $\U_0$. Since $\U_1\mless \U_0$, then
$N_{U_0}$ has some element $U_1$ of $\U_1$, and so $U_1\mless U_0$ in the ordering on Ramsey
measures. Continuing in the same manner, we obtain a descending sequence
\begin{displaymath}
U_0\mgre U_1\mgre\cdots\mgre U_n\mgre\cdots
\end{displaymath}
in the $\mless$-order on Ramsey measures, which is impossible by
Lemma~\ref{lem:measureOrder}.
\end{proof}

The lemma implies that we can assign to each witness collection \(\U\) of Ramsey measures
on \(\kappa\) its rank \(\oR(\U)\) in the order \(\mless\). We can then let
\[
\oR(\kappa)=\{\oR(\U)\mid\text{\(\U\) is  a witness collection of Ramsey measures on
\(\kappa\)}\}.
\]
We define ranks for strongly and super Ramsey cardinals in similar fashion.

The defining property of the Mitchell order is that a normal measure $U$ on $\kappa$ has
rank $\alpha$ if and only if $\Ult(V,U)$ satisfies $o(\kappa)=\alpha$. The analogous result
for the M-order on witness collections of Ramsey measures on $\kappa$ will be that
$\oR(\U)\geq\alpha$ if and only if for every $U\in\U$, $N_U\models \oR(\kappa)\geq\alpha$.
It is not feasible to obtain equality because a witness collection $\U$ of rank $\alpha$ can
easily have elements $U$ with $N_U\models \oR(\kappa)>\alpha$. Still we will be able to
show that ``well-behaved" collections always exist: if $\alpha<\oR(\kappa)$, then there is
some witness collection $\W$ with $\oR(\W)=\alpha$ such that
$N_W\models \oR(\kappa)=\alpha$ for every $W\in \W$.

A subtle issue that arises when trying to prove that the rank of a witness collection
corresponds to the rank of $\kappa$ in the ultrapowers of its measures in the case of Ramsey
cardinals (but does not arise for strongly Ramsey or super Ramsey cardinals) is that the
ultrapower of a weak $\kappa$-model can already be wrong about whether something is a Ramsey
measure or not since in most cases it is missing countable sequences. To prove the result we
will temporarily use a stronger notion of Ramsey measure, which will be absolute for
transitive $\ZFC^-$-models, and show that the two notions give the same M-rank.

Let us say that an $M_U$-ultrafilter $U$ on $\kappa$ is a \emph{certified Ramsey measure} if
it is weakly amenable and there is some unbounded $I\subseteq\kappa$ such that $X\in U$ if
and only if \(X\) contains a tail of $I$, in which case we say that $I$ \emph{certifies}
$U$. Clearly, every certified Ramsey measure is a Ramsey measure because it is even
$\kappa$-intersecting (every sequence of $\lt\kappa$-many sets in $U$ has a non-empty
intersection) and certified
Ramsey measures have the advantage of being absolute between transitive models of set
theory. In fact, a standard proof that Ramsey cardinals have Mitchell's characterization
(see \cite{dodd:coremodel} or a more detailed exposition in \cite{gitman:dissertation})
actually produces for every $A\subseteq\kappa$, an $A$-good certified Ramsey measure.
Briefly, the proof uses the notion of a \emph{good set of indiscernibles} for a structure
$L_\kappa[A]$ with $A\subseteq\kappa$ and shows that if $\kappa$ is Ramsey, then for every
$A\subseteq\kappa$, there is a good set $I$ of indiscernibles for $L_\kappa[A]$ of size
$\kappa$. The indiscernibles in $I$ are then used to construct a weak $\kappa$-model $M$
(with the largest cardinal $\kappa$) and a weakly amenable $\omega_1$-intersecting
$M$-ultrafilter that is certified by $I$.
Given a witness collection $\U$ of certified
Ramsey measures, let $\oR^*(\U)$ denote the rank of $\U$ in the $\mless$-order restricted to
witness collections of certified Ramsey measures and let $\oR^*(\kappa)$ be the supremum of
the ranks of all such $\U$.

For inductive arguments about the M-rank of Ramsey or Ramsey-like cardinals, we will often
need to know that the results hold not just in $V$, but more generally in transitive set
models of $\ZFC^-$ that know enough about the cardinal. If $\M$ is a transitive model of
$\ZFC^-$ and $\kappa$ is a cardinal in $\M$, we will say that \(\M\) is
\emph{practical} for \(\kappa\) if $V_{\kappa+3}^{\M}$ exists (this ensures that the model
can put together all witness collections in order to rank them).
We formulate the following several results about Ramsey and Ramsey-like cardinals for
practical models and note that the results also hold for \(V\), since we may always work
with a sufficiently large \(H_\lambda\) for which all of the notions are absolute.

\begin{lemma}\label{lem:rankCertifiedMeasures}
If \(\alpha\) is an ordinal and $\M$ is practical for $\kappa$, then, in $\M$,
a witness collection $\U$ of certified Ramsey measures on $\kappa$ has $\oR^*(\U)\geq\alpha$
if and only if $N_U\models\oR^*(\kappa)\geq\alpha$ for all $U\in\U$.
\end{lemma}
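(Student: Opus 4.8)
The plan is to prove the equivalence by induction on $\alpha$, simultaneously in both directions and uniformly for all models that are practical for $\kappa$, so that the inductive hypothesis can be invoked both in $\M$ itself and inside the various ultrapowers. Two reformulations drive the argument. First, since $\mless$ is well-founded, $\oR^*(\W)\geq\alpha$ holds exactly when for every $\beta<\alpha$ there is a witness collection $\W'\mless\W$ with $\oR^*(\W')\geq\beta$. Second, because the set of ranks realized by witness collections is downward closed, $N_U\models\oR^*(\kappa)\geq\alpha$ holds exactly when, for every $\beta<\alpha$, $N_U$ contains a witness collection of certified Ramsey measures of rank at least $\beta$; this avoids any awkward successor/limit split. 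The engine of the proof is the absoluteness bought by certifiedness: being a certified Ramsey measure is absolute between transitive models, and since each $N_W$ is a fixed transitive set recoverable from $W$ (and is itself practical, as $V_\kappa\in M_W$ guarantees that $V_{j(\kappa)}^{N_W}$, hence $V_{\kappa+3}^{N_W}$, exists), the statement ``$N_W\models\oR^*(\kappa)\geq\beta$'' is absolute between any two practical models containing $W$. Note that each $N_U$ is itself practical for the same reason, so the inductive hypothesis applies to it.

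For the forward direction I would fix $U\in\U$ and $\beta<\alpha$. Using $\oR^*(\U)\geq\alpha$, choose $\W\mless\U$ with $\oR^*(\W)\geq\beta$; by the inductive hypothesis in $\M$ every $W\in\W$ then satisfies $N_W\models\oR^*(\kappa)\geq\beta$. The naive candidate $\W\cap N_U$ need not be an element of $N_U$, and this is the central obstacle. I would circumvent it by replacing external membership in $\W$ with the internal, absolute property just isolated: working in $N_U$, set $\W^U=\{W : N_U\models \text{``$W$ is a certified Ramsey measure on $\kappa$ and $N_W\models\oR^*(\kappa)\geq\beta$''}\}$, which is an element of $N_U$ by separation. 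Clause (1) of $\W\mless\U$ supplies, for each $A\in\pset(\kappa)^{N_U}$, an $A$-good $W\in\W\cap N_U$ that $N_U$ certifies as a certified Ramsey measure; since such a $W$ lies in $\W$ it inherits $N_W\models\oR^*(\kappa)\geq\beta$ absolutely, so $W\in\W^U$, showing $\W^U$ is a witness collection in $N_U$. Applying the inductive hypothesis now inside $N_U$ to $\W^U$ gives $N_U\models\oR^*(\W^U)\geq\beta$, since by construction every member $W$ of $\W^U$ has $N_W\models\oR^*(\kappa)\geq\beta$. As $\beta<\alpha$ was arbitrary, $N_U\models\oR^*(\kappa)\geq\alpha$.

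For the converse I would fix $\beta<\alpha$. For each $U\in\U$ the hypothesis gives $N_U\models\oR^*(\kappa)>\beta$, and the same definable collection $\W_U=\{W : N_U\models \text{``$W$ is a certified Ramsey measure and $N_W\models\oR^*(\kappa)\geq\beta$''}\}\in N_U$ contains, by the inductive hypothesis inside $N_U$, any witnessing collection of rank $\geq\beta$ present in $N_U$, and is therefore itself a witness collection there of rank $\geq\beta$. I would then assemble $\W=\bigcup_{U\in\U}\W_U$, a set in $\M$ by replacement. Clause (2) holds since each $\W_U\subseteq N_U$, and clause (1) holds because each $\W_U$ is already a witness collection inside the corresponding $N_U$; hence $\W\mless\U$. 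Finally every $W\in\W$ lies in some $\W_U$ and so satisfies $N_W\models\oR^*(\kappa)\geq\beta$ absolutely, whence the inductive hypothesis in $\M$ yields $\oR^*(\W)\geq\beta$. Letting $\beta$ range below $\alpha$ gives $\oR^*(\U)\geq\alpha$. The main difficulty throughout is exactly the elementhood problem for the transferred collections, and the device that resolves it—and the reason certified measures were introduced—is the absoluteness of certifiedness together with that of the predicate ``$N_W\models\oR^*(\kappa)\geq\beta$'', which lets external membership in a witness collection be swapped for an internally definable property.
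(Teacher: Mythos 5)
Your proposal is correct and takes essentially the same route as the paper's proof: induction on \(\alpha\) carried out uniformly over practical models, using the absoluteness of certifiedness and of the statement ``\(N_W\models\oR^*(\kappa)\geq\beta\)'' (since \(N_W\) is recoverable from \(W\)) to collect a witness collection inside \(N_U\) for the forward direction, and assembling \(\W=\bigcup_{U\in\U}\W_U\) from internal collections for the converse. Your explicit formation of the internal collections by separation, and your check that each \(N_U\) is itself practical, just make precise steps the paper leaves implicit.
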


\begin{proof}
We will argue by induction on $\alpha$. The case $\alpha=0$ is trivial. So suppose that the
statement is true for all $0\leq \beta<\alpha$.  Fix an \(\M\) practical for \(\kappa\)
and work in $\M$.

In one direction, fix a witness collection $\U$ of certified measures on $\kappa$ with
$\oR^*(\U)\geq\alpha$. Let $U\in\U$. We must show that $N_U$ has witness collections of
certified Ramsey measures of all ranks $\beta<\alpha$. Fix $\beta<\alpha$. Since
$\oR^*(\U)\geq\alpha$, there must be some witness collection $\W\mless \U$ of certified
Ramsey measures with $\oR^*(\W)=\beta$. By the inductive hypothesis applied to
$\M$, for all $W\in\W$, we have $N_W\models\oR^*(\kappa)\geq\beta$. Therefore,
since \(\W\mless\U\), there is for every
$A\subseteq\kappa$ in $N_U$ some \(A\)-good certified Ramsey measure $W\in N_U$
with $N_W\models\oR^*(\kappa)\geq\beta$ and so $N_U$, by collecting these together, has a
witness collection $\overline{\W}$ of certified Ramsey measures such that
$N_W\models\oR^*(\kappa)\geq\beta$ holds for all
$W\in\overline{\W}$. But then, by applying our inductive
hypothesis to $N_U$, we have that $\oR^*(\overline{\W})\geq\beta$ in $N_U$. This completes
the proof in one direction.

In the other direction, suppose that $\U$ is a witness collection of certified Ramsey
measures such that $N_U\models\oR^*(\kappa)\geq\alpha$ for all $U\in \U$. We must show that
for all $\beta<\alpha$, there is a witness collection $\W$ of certified Ramsey measures with
\(\W\mless\U\) and $\oR^*(\W)\geq\beta$. Fix $\beta<\alpha$. For each $U\in\U$, we can fix
some $\W_U$, which
$N_U$ thinks is a witness collection of certified Ramsey measures of rank at least $\beta$,
and let $\W=\Union_{U\in\U}\W_U$. Since all our measures are certified, $\W$ is a witness
collection of certified Ramsey measures and we have arranged that \(\W\mless\U\).
If $W\in \W$, then $W\in\W_U$ for some
$U\in \U$ and therefore, since $N_U\models\oR^*(\W_U)\geq\beta$, we get
$N_W\models\oR^*(\kappa)\geq\beta$ by applying the inductive
hypothesis to $N_U$. Thus, by the inductive hypothesis,
$\oR^*(\W)\geq\beta$, which establishes this direction.
\end{proof}

Note that the fact that the Ramsey measures we are working with are certified
only came into play in the second part of the proof. Essentially, being a Ramsey measure
is downward absolute. On the other hand, we would have run into trouble in the second
part if we had built the collection \(\W\) using only ordinary Ramsey measures, since
\(N_U\) and \(\M\) might disagree on whether a given filter is \(\omega_1\)-intersecting.
This observation will be important when we revisit this proof in
Theorem~\ref{th:rankMeasures}.

The desired result, which is the same lemma for witness collections of Ramsey
measures, will follow once we establish that $\oR^*(\kappa)=\oR(\kappa)$. First, we have to
review a few basic facts which we will use now and in later sections.

Observe that if $U$ is a Ramsey measure, then the intersection of any countably many sets in
$U$ has size $\kappa$ because if the intersection was bounded by $\alpha<\kappa$, we could
add $\kappa\setminus\alpha$ to the sets being intersected (recall that all tails of
\(\kappa\) are in $U$
by assumption) and violate the $\omega_1$-intersecting property. Another useful fact is that
for every ordinal $\kappa\leq \alpha<\kappa^+$, if $E$ is a well-ordering of $\kappa$ in
order-type $\alpha$, then there is a single function $g^E:\kappa\to\kappa$ such that
whenever $U$ is an $M$-ultrafilter with a well-founded ultrapower and $E\in M$, then
$g^E\in M$ and $[g^E]_U=\alpha$ in the ultrapower. We call such $g^E$ a
\emph{representing function} for $\alpha$ and can define it by simply letting $g^E(\xi)$
be the order-type of
$E\restrict\xi\times\xi$.

\begin{theorem}\label{th:rankMeasures}
Let \(\alpha\) be an ordinal and \(\M\) practical for \(\kappa\). Then the following
hold in \(\M\):
\begin{enumerate}
\item a witness collection \(\U\) of Ramsey measures on \(\kappa\) has \(\oR(\U)\geq\alpha\)
if and only if \(N_U\models\oR(\kappa)\geq\alpha\) for all \(U\in\U\);

\item if there is a witness collection \(\U\) of Ramsey measures on \(\kappa\) with
\(\oR(\U)\geq\alpha\), then there is also a witness collection \(\U^*\) of certified
Ramsey measures with \(\oR(\U^*)\geq\alpha\).
\end{enumerate}
\end{theorem}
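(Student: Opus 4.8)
The plan is to prove the two statements together by a simultaneous induction on $\alpha$, leveraging the already-established Lemma~\ref{lem:rankCertifiedMeasures} for certified measures as the backbone. The crucial conceptual point, flagged in the remark after that lemma, is that ordinary Ramsey measures are downward absolute but \emph{not} upward absolute between $N_U$ and $\M$: a filter that $N_U$ sees as an $\omega_1$-intersecting $M$-ultrafilter really is one in $\M$ (being $\omega_1$-intersecting is a $\Pi_1$-type assertion about countable subcollections, and $N_U$ has no \emph{fewer} countable sequences than the truth requires for a counterexample to survive upward), whereas $\M$ might know of a countable subcollection witnessing failure that $N_U$ cannot see. This asymmetry is exactly what forces us to pass through certified measures in one direction.

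First I would observe that part~(2) essentially reduces to Lemma~\ref{lem:rankCertifiedMeasures} combined with the standard indiscernibles construction described in the preliminaries: since $\kappa$ is Ramsey in $\M$ (as witnessed by $\U$), for every $A\subseteq\kappa$ there is an $A$-good certified Ramsey measure, so certified measures are plentiful enough to form witness collections of every rank. The real content is showing $\oR(\kappa)=\oR^*(\kappa)$, i.e.\ that restricting to certified measures does not lower the achievable ranks. One inequality is immediate: every certified Ramsey measure \emph{is} a Ramsey measure, and the $\mless$-order restricted to certified collections is a suborder, so $\oR^*(\U)\leq\oR(\U)$ always. For the reverse I would show by induction that any witness collection of Ramsey measures of rank $\geq\alpha$ can be ``certified,'' namely replaced by a witness collection of certified measures of rank $\geq\alpha$. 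Given $\U$ with $\oR(\U)\geq\alpha$, by the inductive content of part~(1) each $N_U$ satisfies $\oR(\kappa)\geq\alpha$; applying the inductive form of the equality $\oR=\oR^*$ inside each $N_U$ yields certified witness collections there, which one collects to build a certified $\W^*\mless\U^*$ realizing rank $\beta$ for each $\beta<\alpha$.

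For part~(1), the forward direction (from $\oR(\U)\geq\alpha$ to $N_U\models\oR(\kappa)\geq\alpha$) runs exactly as in the first half of Lemma~\ref{lem:rankCertifiedMeasures}, where certification was not used, so that half transfers verbatim. The delicate direction is the converse: assuming $N_U\models\oR(\kappa)\geq\alpha$ for all $U\in\U$, I must manufacture a genuine witness collection $\W\mless\U$ of Ramsey measures with $\oR(\W)\geq\beta$ for each $\beta<\alpha$. Here I would \emph{not} work directly with the Ramsey measures $N_U$ hands me, because $N_U$ might be wrong about $\omega_1$-intersectingness from the standpoint of $\M$. Instead I would route through certified measures: using $\oR(\kappa)=\oR^*(\kappa)$ inside each $N_U$ (available by the simultaneous induction), pass to \emph{certified} witness collections $\W_U^*$ of rank $\geq\beta$ in $N_U$, take $\W^*=\bigcup_{U\in\U}\W_U^*$, and invoke Lemma~\ref{lem:rankCertifiedMeasures} in $\M$ to conclude $\oR^*(\W^*)\geq\beta$, hence $\oR(\W^*)\geq\beta$. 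Since certification is absolute, the $\W_U^*$ that $N_U$ certifies really are certified in $\M$, so $\W^*$ is an honest witness collection in $\M$ and $\W^*\mless\U$ holds by the same collecting argument as before.

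The main obstacle I anticipate is bookkeeping the simultaneous induction cleanly: part~(1) at level $\alpha$ needs the equality $\oR=\oR^*$ at levels $\beta<\alpha$ \emph{inside the ultrapowers} $N_U$, while that equality is itself what part~(2) is building toward, so I must verify that the induction hypotheses are set up to be applicable in the practical models $N_U$ and not merely in $\M$. This is precisely why the lemmas are stated for arbitrary practical models rather than just for $V$: each $N_U$ is practical for $\kappa$ (it thinks $V_{j(\kappa)}$ exists, giving $V_{\kappa+3}^{N_U}$), so the inductive hypotheses apply internally. The one point requiring genuine care is confirming that the certified measures produced inside $N_U$ retain certification when viewed in $\M$ and that their certifying sets $I\subseteq\kappa$ transfer, but this follows from the absoluteness of certification between transitive $\ZFC^-$-models noted when certified measures were introduced.
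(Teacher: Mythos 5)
Your part (1) is essentially the paper's argument and is fine: the forward direction transfers verbatim from Lemma~\ref{lem:rankCertifiedMeasures}, and the converse correctly routes through certified collections supplied by part (2) of the induction hypothesis inside each \(N_U\) (each \(N_U\) is indeed practical, as you note). One caution: your opening absoluteness discussion contradicts itself. You assert that a filter which \(N_U\) sees as \(\omega_1\)-intersecting ``really is one in \(\M\),'' and then, correctly, that \(\M\) may see a countable counterexample invisible to \(N_U\). Only the second statement is true --- being a Ramsey measure passes \emph{down} from \(\M\) to \(N_U\) (the smaller model has fewer countable sequences to test) and not up --- and this failure of upward absoluteness is exactly why the converse must pass through certified measures, as you in fact arrange.

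The genuine gap is in part (2). Your certification scheme takes \(\U\) with \(\oR(\U)\geq\alpha\), gets \(N_U\models\oR(\kappa)\geq\alpha\), and applies the inductive equality inside each \(N_U\); but the induction hypothesis is available only at levels \(\beta<\alpha\), so this produces, for each \(\beta<\alpha\), a certified collection \(\W^*_\beta\mless\U\) with \(\oR(\W^*_\beta)\geq\beta\). (Your phrase ``a certified \(\W^*\mless\U^*\)'' gives this away: \(\U^*\) is never constructed.) That is just a re-proof of the converse of part (1); it never yields a \emph{single} certified collection of rank \(\geq\alpha\), which is what part (2) asserts and what the induction at higher levels consumes. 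By part (1), what is actually required are certified measures \(U^*\) with \(N_{U^*}\models\oR(\kappa)\geq\alpha\), and nothing in your construction arranges this --- your measures only satisfy \(N_W\models\oR(\kappa)\geq\beta\) for varying \(\beta<\alpha\). The missing idea is the paper's reflection step: fix a well-order \(E\) of \(\kappa\) of order-type \(\alpha\) with representing function \(g^E\) and choose an \(\{A,E\}\)-good \(U\in\U\); since \([g^E]_U=\alpha\) and \(N_U\models\oR(\kappa)\geq\alpha\), the set \(X=\{\xi<\kappa\mid\oR(\xi)\geq g^E(\xi)\}\) lies in \(U\); then run the good-indiscernibles construction for \(L_\kappa[A^*]\) (with \(A^*\) coding \(A\), \(E\), \(V_\kappa\)) while intersecting with \(X\), so that the certifying set \(I_{A^*}\) is contained in \(X\). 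The resulting certified \(U^*\) then has \(X\in U^*\) and \(E\in M_{U^*}\), hence \(N_{U^*}\models\oR(\kappa)\geq[g^E]=\alpha\), and collecting these \(U^*\) and applying part (1) gives \(\oR(\U^*)\geq\alpha\). Without this step, your claim that ``certified measures are plentiful enough to form witness collections of every rank'' is unsupported: plentifulness gives witness collections, not rank.
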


\noindent Note that we used the simple M-rank in both cases in part (2) of the lemma
and not the M-rank restricted to witness collections of certified measures.

\begin{proof}
We will prove both parts of the statement simultaneously by induction on \(\alpha\).
For the base case \(\alpha=0\) part (1) is trivial and part (2) follows because, as we
already noted, if $\kappa$ is Ramsey, then for every $A\subseteq\kappa$, there is an $A$-good certified Ramsey measure. So suppose inductively that the statement holds for all
$0\leq\beta<\alpha$. Fix an \(\M\) practical for \(\kappa\) and work in \(\M\).

Let us first show that part (1) holds for \(\alpha\) by mirroring the proof of
Lemma~\ref{lem:rankCertifiedMeasures}.
The forward direction goes through exactly as in the proof of
Lemma~\ref{lem:rankCertifiedMeasures}, since, as we noted after that proof, the fact
that the Ramsey measures were certified played no part in this particular argument.
For the converse, suppose that \(\U\) is a witness collection of Ramsey measures
such that \(N_U\models\oR(\kappa)\geq\alpha\) for all \(U\in\U\). Fix a \(\beta<\alpha\).
For each \(U\in\U\) we can fix \(\W_U\) which \(N_U\) thinks is a collection of
Ramsey measures of rank at least \(\beta\) and let \(\W=\bigcup_{U\in\U}\W_U\). We would
like to say that \(\W\) is a witness collection of Ramsey measures, but this need not be
the case if we are working with arbitrary (noncertified) Ramsey measures. Instead, we
apply part (2) of the induction hypothesis to each \(N_U\) to replace each \(\W_U\)
with a witness collection \(\W_U^*\) of \emph{certified} Ramsey measures satisfying
\(\oR(\W_U^*)\geq\beta\) in \(N_U\). If we now let \(\W^*=\bigcup_{U\in\U}\W_U^*\),
this actually is a witness collection of Ramsey measures and, again, \(\W^*\mless\U\).
The rest of the argument
proceeds as before: if \(W\in\W^*\), then \(W\in \W_U^*\) for some \(U\in\U\) and
therefore, since \(N_U\models \oR(\W_U^*)\geq\beta\), we get
\(N_W\models\oR(\kappa)\geq\beta\) by applying part (1) of the induction hypothesis to
\(N_U\). Thus, by part (1) of the induction hypothesis again, \(\oR(\W^*)\geq\beta\).
Altogether, this shows that \(\oR(\U)\geq\alpha\).

Now we move on to show that part (2) holds for \(\alpha\).
Suppose that there is a witness collection $\U$ of Ramsey measures on $\kappa$ with
$\oR(\U)\geq\alpha$. We need to show that there is a witness collection $\U^*$ of certified
Ramsey measures on $\kappa$ with $\oR(\U^*)\geq\alpha$. By what we just argued it follows
that $N_U\models\oR(\kappa)\geq\alpha$ for all $U\in\U$. The next step is to replace each
$U$ with some $U^*$, where $U^*$ is certified and $N_{U^*}$ also satisfies that
$\oR(\kappa)\geq\alpha$. For this, we need to look more closely at how a good set $I$ of
indiscernibles for $L_\kappa[A]$ is constructed.

For every $A\subseteq\kappa$, there is an associated club $C_A$ in $\kappa$ and a regressive
function $f_A:[C_A]^{\lt\omega}\to\kappa$ such that any homogeneous set for $f_A$ is a good
set of indiscernibles for $L_\kappa[A]$. The club $C_A$ and function $f_A$ are defined
simply enough from $A$ that any transitive model of $\ZFC^-$ containing $A$ also contains
$C_A$ and $f_A$ (for details, see \cite{gitman:dissertation}, chapter 2). Given an $A$-good
Ramsey measure $U$, we will find  a homogeneous set $I$ of size $\kappa$ for $f_A$ by
showing that for each $n<\omega$, the restriction $f_n:[C_A]^n\to\kappa$ of $f_A$ has a
homogeneous set in $U$ and using the $\omega_1$-intersecting property of $U$. Since $U$ is
weakly amenable, we can define the finite product $M_U$-ultrafilters $U^n$ for $n<\omega$
(where $U^1=U$) and since all iterated ultrapowers of $U$ are well-founded and the
ultrapower by $U^n$ is isomorphic to the $n^{\text{th}}$-iterated ultrapower of $U$, it
follows that all ultrapower maps $j_{U^n}:M_U\to N_{U^n}$ are embeddings into transitive
models. Standard facts about product ultrafilters also tell us that a set
$B\subseteq\kappa^n$ is in $U^n$ if and only if
$\la \kappa,j_U(\kappa),j_{U^2}(\kappa),\ldots,j_{U^{n-1}}(\kappa)\ra\in j_{U^n}(B)$.
Now fix $n<\omega$ and consider $f_n$. The set $[C_A]^n$ is in $U^n$, since every club is in
$U$, and so we can let
$j_{U^n}(f_n)(\kappa,j_U(\kappa),j_{U^2}(\kappa),\ldots,j_{U^{n-1}}(\kappa))=\xi$,
where we must have $\xi<\kappa$ since $j_{U^n}(f_n)$ is regressive by elementarity. It
follows that the set
\begin{displaymath}
X_n=\{\la\xi_1,\xi_2,\ldots,\xi_n\ra\in [C]^n\mid f_n(\xi_1,\xi_2,\ldots,\xi_n)=\xi\}
\end{displaymath}
is in \(U\).
By properties of product ultrafilters, there is a set $\overline X_n\in U$ such that every
sequence $\la \xi_1,\xi_2,\ldots,\xi_n\ra\in [C]^n$ with $\xi_i\in \overline X_n$ is in
$X_n$. Clearly each $\overline X_n$ is homogeneous for $f_n$ and so we can intersect all the
$\overline X_n$ to obtain a homogeneous set $I$ of size $\kappa$ for $f_A$. Note that we can
further refine $I$ by adding some other sets in $U$ to the intersection.

Now fix some $A\subseteq\kappa$ and find an $\{A,E\}$-good Ramsey measure $U$ in $\U$,
where \(E\) is some well-order of \(\kappa\) of order-type $\alpha$, so that
we have the representing function \(g^E\) in \(M_U\) (if \(\alpha<\kappa\) we can use
a constant function instead of \(g^E\) and omit \(E\) from the following discussion).
Since $N_U\models\oR(\kappa)\geq\alpha$, the set
\begin{displaymath}
X=\{\xi<\kappa\mid \oR(\xi)\geq g^E(\xi)\}
\end{displaymath}
is in \(U\).
This is crucial to the ensuing construction. Let $A^*\subseteq\kappa$ code the triple
$\{A,E,V_\kappa\}$. Now we consider the regressive function
$f_{A^*}:[C_{A*}]^{\lt\omega}\to\kappa$ and construct a good set $I_{A^*}$ of indiscernibles
for $L_\kappa[A^*]$ by intersecting the sets $\overline X_n$, homogeneous for $f_n$,
together with $X$. This ensures that $I_{A^*}\subseteq X$. Using $I_{A*}$, we construct a
certified Ramsey measure $U^*$ with $A,E,V_\kappa\in M_{U^*}$, which is certified by
$I_{A^*}$. Note that $X$ is an element of $M_{U^*}$ because it is definable over $V_\kappa$
from $E$ and so it must be the case that $X\in U^*$. But since $E\in M_{U^*}$, it follows
that $[g^E]=\alpha$ in the ultrapower $N_{U^*}$ and so
$N_{U^*}\models\oR(\kappa)\geq\alpha$. Thus, we have succeeded in finding for every
$A\subseteq\kappa$, an $A$-good certified Ramsey measure $U^*$ such that
$N_{U^*}\models\oR(\kappa)\geq\alpha$. Let $\U^*$ be the witness collection consisting of
these $U^*$. By part (1) above, $\oR(U^*)\geq\alpha$, which completes the argument.
\end{proof}

The proofs of analogous results for witness collections of strong Ramsey or super Ramsey
measures are even easier, in that we do not even need to introduce certified measures.
A $\kappa$-model is always correct about a set being a strong
Ramsey measure and a $\kappa$-model that is elementary in $\her{\kappa}$ is always correct
about a set being a super Ramsey measure: if $M\prec\her{\kappa}$ is a $\kappa$-model and
$U$ is a weakly amenable $M$-ultrafilter, then $M$ is the $\her{\kappa}$ of the ultrapower
$N$ and therefore if $\overline M\prec\her{\kappa}^N$, then $\overline M\prec\her{\kappa}$.

A more direct approach to defining the rank of a Ramsey (or Ramsey-like) cardinal, without
introducing the order on the witness collections, would be as follows. Define that the
Ramsey rank of $\kappa$ is $0$ if $\kappa$ is not Ramsey, that the Ramsey rank of
$\kappa$ is $\geq 1$ if $\kappa$ is Ramsey, and now inductively that the Ramsey rank
of $\kappa$ is $\geq\alpha$ if for every $A\subseteq\kappa$ and $\beta<\alpha$, there is an
$A$-good Ramsey measure $U$ such that the Ramsey rank of $\kappa$ in $N_U$ is $\geq\beta$.
Finally, define that the rank of $\kappa$ is exactly $\alpha$ if it is $\geq\alpha$, but it
is not $\geq\alpha+1$. As a corollary of Theorem~\ref{th:rankMeasures}, we get that the
M-rank is precisely the Ramsey rank we just described.

\begin{corollary}\label{cor:approxMitchellRank}
If \(\alpha\) is an ordinal and \(\M\) is practical for \(\kappa\), then, in \(\M\),
we have $\oR(\kappa)\geq\alpha$ if and only if for every $A\subseteq\kappa$ and every
$\beta<\alpha$, there is an $A$-good Ramsey measure $W$ on $\kappa$ with
$N_W\models \oR(\kappa)\geq\beta$.
The same result holds for strongly Ramsey and super Ramsey measures.
\end{corollary}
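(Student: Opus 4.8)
The plan is to derive the corollary directly from Theorem~\ref{th:rankMeasures}(1), which already equates the rank of a witness collection with a pointwise statement about the ultrapowers of its members; the corollary is essentially a repackaging of that equivalence into a condition quantifying over individual \(A\)-good measures rather than over whole collections. Throughout I would work inside the practical model \(\M\), exactly as in the theorem, so that all witness collections can be formed and ranked (\(V_{\kappa+3}^{\M}\) exists). The two facts I would lean on are: first, a witness collection is by definition a family containing an \(A\)-good Ramsey measure for every \(A\subseteq\kappa\); and second, unwinding the definition of \(\oR(\kappa)\) as the supremum of collection-ranks, the relation \(\beta<\oR(\kappa)\) holds exactly when there is a witness collection \(\U\) with \(\oR(\U)\geq\beta\).

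For the forward direction, suppose \(\oR(\kappa)\geq\alpha\) and fix \(A\subseteq\kappa\) and \(\beta<\alpha\). Then \(\beta<\oR(\kappa)\), so there is a witness collection \(\U\) with \(\oR(\U)\geq\beta\). By Theorem~\ref{th:rankMeasures}(1), every \(U\in\U\) satisfies \(N_U\models\oR(\kappa)\geq\beta\). Since \(\U\) is a witness collection, it contains an \(A\)-good measure \(W\), and this \(W\) is exactly the desired measure.

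For the converse, assume that for every \(A\subseteq\kappa\) and every \(\beta<\alpha\) there is an \(A\)-good Ramsey measure with the stated ultrapower property. Fixing \(\beta<\alpha\), I would for each \(A\subseteq\kappa\) choose such an \(A\)-good measure \(W_A\) and assemble \(\U_\beta=\{W_A\mid A\subseteq\kappa\}\). By construction \(\U_\beta\) is a witness collection and each member \(W\) satisfies \(N_W\models\oR(\kappa)\geq\beta\), so the other direction of Theorem~\ref{th:rankMeasures}(1) yields \(\oR(\U_\beta)\geq\beta\). Hence \(\oR(\kappa)>\beta\) for every \(\beta<\alpha\), and therefore \(\oR(\kappa)\geq\alpha\); this handles the limit and successor cases uniformly.

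I do not expect any real obstacle beyond careful bookkeeping: the whole argument is a direct translation across Theorem~\ref{th:rankMeasures}(1), and the only point to watch is the ordinal arithmetic relating ``\(\oR(\kappa)\geq\alpha\)'' to the existence of collections of rank \(\geq\beta\) for all \(\beta<\alpha\). The statements for strongly Ramsey and super Ramsey measures follow by the identical argument, invoking the analogues of Theorem~\ref{th:rankMeasures}(1) for those measures, which, as remarked after that theorem, hold and are in fact easier to establish since no certified measures are needed.
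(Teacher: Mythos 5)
Your proposal is correct and takes essentially the same route as the paper: the paper presents this statement as an immediate consequence of Theorem~\ref{th:rankMeasures}(1), and your two directions (extracting an $A$-good measure from a witness collection of rank $\geq\beta$, and conversely assembling chosen $A$-good measures into a witness collection $\U_\beta$ to which Theorem~\ref{th:rankMeasures}(1) applies) are exactly the intended unwinding, including the appeal to the easier strongly/super Ramsey analogues noted after that theorem. The ordinal bookkeeping you flag --- that $\oR(\kappa)\geq\alpha$ amounts to the existence, for each $\beta<\alpha$, of a witness collection of rank at least $\beta$ --- is handled correctly and is all that needed checking.
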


\noindent Corollary~\ref{cor:approxMitchellRank} allows us to calculate $\oR(\kappa)$ inside
$\her{\kappa}$ and confirms the intuition that objects in $\her{\kappa}$ should suffice to
compute the rank of a Ramsey or Ramsey-like cardinal.
An important advantage of this alternative description of the \(M\)-rank is that it is
meaningful even in models of set theory which are not practical for \(\kappa\),
e.g.\ in \(\kappa\)-models where
\(\mathcal{P}(\kappa)\) does not exist. Such models might contain many Ramsey measures
on \(\kappa\) but cannot collect them into a witnessing collection. Consequently, the
M-rank, as originally defined, of \(\kappa\) in such a model would be 0, but computing it
in this alternative way might give nontrivial values.

Next, as promised, we show that there are always ``well-behaved" witness collections of
Ramsey (strongly Ramsey, super Ramsey) measures.

\begin{theorem}\label{th:wellBehavedCollections}
If \(\alpha\) is an ordinal and \(\M\) is practical for \(\kappa\), then, in \(\M\), whenever
$\oR(\kappa)>\alpha$, there is a witness collection $\U$ of Ramsey measures on $\kappa$
with $\oR(\U)=\alpha$ such that $N_U\models\oR(\kappa)=\alpha$ for all $U\in\U$. The same
result holds for strongly Ramsey and super Ramsey measures.
\end{theorem}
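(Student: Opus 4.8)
The plan is to reduce the entire statement to a single-measure existence claim and then read off the rank of the assembled collection from Theorem~\ref{th:rankMeasures}. Precisely, it suffices to prove the following Subclaim: if $\oR(\kappa)>\alpha$, then for every $A\of\kappa$ there is an $A$-good (certified) Ramsey measure $U$ with $N_U\models\oR(\kappa)=\alpha$. Granting this, I would let $\U$ consist of one such $U$ for each $A\of\kappa$. This $\U$ is a witness collection, and since every $U\in\U$ has $N_U\models\oR(\kappa)\geq\alpha$, Theorem~\ref{th:rankMeasures}(1) gives $\oR(\U)\geq\alpha$. Conversely, $\oR(\U)\geq\alpha+1$ would force, again by Theorem~\ref{th:rankMeasures}(1), that $N_U\models\oR(\kappa)\geq\alpha+1$ for \emph{all} $U\in\U$, contradicting $N_U\models\oR(\kappa)=\alpha$. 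Hence $\oR(\U)=\alpha$, exactly as required, and the same bookkeeping works for strongly and super Ramsey measures.

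To attack the Subclaim I would reformulate ``reflecting exactly $\alpha$'' in terms of a single set. Fix $A$, let $E$ be a well-order of $\kappa$ of order type $\alpha$ (a constant function if $\alpha<\kappa$) with representing function $g^E$, and let $A^*$ code $\{A,E,V_\kappa\}$. The set $Y=\{\xi<\kappa\mid \oR(\xi)=g^E(\xi)\}$ is definable over $V_\kappa$ from $E$, so $Y\in M_U$ for any measure with $E,V_\kappa\in M_U$; and for such a measure $N_U\models\oR(\kappa)=\alpha$ holds precisely when $Y\in U$, since then $[\,\xi\mapsto\oR(\xi)\,]_U=[g^E]_U=\alpha$ by {\L}o\'s. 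Thus the goal is an $A^*$-good measure containing $Y$. Here I would invoke the good-indiscernibles construction from the proof of Theorem~\ref{th:rankMeasures}(2): starting from a measure reflecting $\oR(\kappa)\geq\alpha$ (available because $\oR(\kappa)>\alpha$, via Corollary~\ref{cor:approxMitchellRank}), the set $X=\{\xi<\kappa\mid \oR(\xi)\geq g^E(\xi)\}$ is measure-one, and one builds a good set of indiscernibles $I\of X$ for $L_\kappa[A^*]$. Because $A^*$ codes $V_\kappa$, the structure sees $Y$, so the comparison of $\oR(\xi)$ with $g^E(\xi)$ lies in the common $1$-type of the elements of $I$; as $I\of X$, this forces either $I\of Y$ or $I\of\{\xi\mid\oR(\xi)>g^E(\xi)\}$. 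In the first case the certified measure certified by $I$ contains $Y$ and we are done.

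The main obstacle is therefore to rule out the second alternative, i.e.\ to secure the exact \emph{upper} bound $N_U\models\oR(\kappa)\leq\alpha$; equivalently, that the least ordinal $\geq\alpha$ reflected by an $A^*$-good certified measure is exactly $\alpha$. I would handle this by a minimality argument running alongside an induction on $\alpha$, leaning on two facts. First, an ultrapower cannot over-reflect: if $B_0$ witnesses the failure of reflection at $\oR(\kappa)$ (as in Corollary~\ref{cor:approxMitchellRank}) and is folded into $A^*$, then pulling back, through the absoluteness of certified measures, a would-be $B_0$-good certified measure from $N_U$ reflecting $\geq\oR(\kappa)$ would contradict the choice of $B_0$; hence $N_U\models\oR(\kappa)\leq\oR(\kappa)$ for these measures. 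Second, inside any $N_U$ reflecting some $\gamma$ with $\alpha\leq\gamma$ we have $\oR(\kappa)>\beta$ for all $\beta<\alpha$, so the induction hypothesis yields, in $N_U$, certified measures reflecting each $\beta<\alpha$ exactly; by absoluteness these descend to genuine $A^*$-good certified measures in $\M$ reflecting each $\beta<\alpha$ exactly. With every $\beta<\alpha$ thus exactly reflected in $\M$, I would then pin the value at $\alpha$: the representing function $g^E$ forces the ultrapower value to be at least $\alpha$, while the non-over-reflection bound prevents it from exceeding $\alpha$, so the minimal reflected value $\geq\alpha$ collapses to $\alpha$ and the ``$>$'' color above is excluded. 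I expect the delicate point to be exactly this boundary analysis (organizing the induction so that passage into $N_U$ is well-founded, and treating the successor and limit forms of $\alpha$ separately), whereas the assembly in the first paragraph and the dichotomy in the second are routine.

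For strongly Ramsey and super Ramsey measures the argument simplifies considerably. As observed after the proof of Theorem~\ref{th:rankMeasures}, a $\kappa$-model is always correct about a set being a strong Ramsey measure, and a $\kappa$-model elementary in $\her{\kappa}$ is always correct about a set being a super Ramsey measure, so there is no need to pass to certified measures and the absoluteness steps above become automatic. I would therefore run the same induction using the representing function to pin the reflected rank, building the required witness collection directly from one exactly-$\alpha$-reflecting $A$-good measure per $A$, and read off $\oR(\U)=\alpha$ from Theorem~\ref{th:rankMeasures}(1) exactly as in the Ramsey case.
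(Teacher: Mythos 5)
Your first paragraph and the reduction to the single-measure Subclaim match the paper's opening move exactly, and your second paragraph (the set $Y$, the representing function, the indiscernibles built inside $X$) is a sound elaboration of the machinery from the proof of Theorem~\ref{th:rankMeasures}. The problem is the third paragraph, which is where the entire content of the theorem lives, and the argument there has a genuine gap. Your ``non-over-reflection'' Fact 1 is correct but proves the wrong bound: choosing $B_0$ to witness the failure of reflection just above the top level $\oR(\kappa)^{\M}$ (via Corollary~\ref{cor:approxMitchellRank}) only shows that $B_0$-good measures reflect some value below $\oR(\kappa)^{\M}$, which may be far above $\alpha$; no failure-witness at level $\alpha+1$ can exist in $\M$, precisely because $\oR(\kappa)>\alpha$. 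Your Fact 2 is also true but does not help: an induction on $\alpha$ applied inside $N_U$ can only ever produce measures reflecting ordinals $<\alpha$ exactly, never $\alpha$ itself. So when you assert that ``the non-over-reflection bound prevents it from exceeding $\alpha$,'' you are asserting exactly the statement to be proved; nothing in Facts 1 and 2, with or without minimality, excludes the scenario in which every $A$-good measure reflecting $\geq\alpha$ in fact reflects, say, $\alpha+1$, while $\oR(\kappa)^{\M}$ is much larger.

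The missing idea is that the over-reflection hypothesis must be exploited \emph{inside the ultrapowers of the over-reflecting measures themselves}, where the internal rank is smaller and hence failure-witnesses (or, in the paper's version, descending chains) become available. The paper argues by contradiction: if some $A$ is bad (every $A$-good measure reflecting $\geq\alpha$ reflects $>\alpha$), then each such measure's ultrapower thinks $\oR(\kappa)>\alpha$, so by Theorem~\ref{th:rankMeasures} applied inside it, it contains a witness collection of \emph{certified} measures of rank $>\alpha$; by absoluteness these are genuine $A$-good measures in $\M$ that again over-reflect, and assembling them over all the ultrapowers gives a new witness collection strictly below the previous one in $\mless$. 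Iterating produces an infinite descending sequence $\W_0\mgre\W_1\mgre\cdots$, contradicting well-foundedness of the M-order. Your minimality idea can be repaired along the same lines---take $U$ achieving the minimal reflected value $\gamma_0\geq\alpha$; since $N_U$ computes its own rank to be exactly $\gamma_0$, a failure-witness $B_1$ exists \emph{in $N_U$} at level $\gamma_0+1$, so certified measures in $N_U$ that are $\{A^*,B_1\}$-good and reflect $\geq\alpha$ must reflect strictly below $\gamma_0$, and pulling one back to $\M$ contradicts minimality---but this amounts to an induction on the rank of the model (equivalently, a well-foundedness argument), not an induction on $\alpha$ as you propose, and it is precisely the step your sketch leaves out.
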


\begin{proof}
As usual, we prove the result for Ramsey measures. Fix an \(\M\) practical for $\kappa$ and
work in $\M$. Suppose that $\W$ is a witness collection of Ramsey measures on $\kappa$ with
$\oR(\W)=\alpha$. If for every $A\subseteq\kappa$, there is some $A$-good Ramsey measure $U$
such that $N_U\models\oR(\kappa)=\alpha$, then we can let $\U$ be the witness collection of
such Ramsey measures, one for every $A$, and by Theorem~\ref{th:rankMeasures}, we would have
$\oR(\U)=\alpha$. Thus, we can suppose towards a contradiction that there is some
$A\subseteq\kappa$ such that for every $A$-good Ramsey measure $U$, if
$N_U\models\oR(\kappa)\geq\alpha$, then $N_U\models\oR(\kappa)>\alpha$.
It follows from this assumption and Theorem~\ref{th:rankMeasures} that there is for
every \(B\subseteq\kappa\) a \(\{A,B\}\)-good Ramsey measure \(W\in\W\) with
\(N_W\models\oR(\kappa)>\alpha\). Let \(\W_0\) be a witness collection consisting
of one such Ramsey measure for every \(B\).
Thus, if
$W\in \W_0$, then $N_W$ has what it thinks is a witness collection of certified Ramsey
measures of rank greater than $\alpha$ by Theorem~\ref{th:rankMeasures} and so, in
particular, $N_W$ has, for any \(B\subseteq\kappa\) in \(N_W\), an $\{A,B\}$-good certified
measure $\overline W$ with the property that $N_{\overline W}\models\oR(\kappa)>\alpha$.
Let $\W_1$ be the witness
collection formed by putting together all such certified measures from all $N_W$ for
$W\in \W_0$. By construction $\W_1\mless \W_0$. But $\W_1$ has the same property as $\W_0$,
namely that for every $W\in \W_1$, $N_W\models\oR(\kappa)>\alpha$. Thus, we can repeat the
process to construct $W_2\mless W_1$ with the same property and in this way obtain a
descending infinite sequence in $\mless$, which is impossible.
\end{proof}

The theorem allows us to obtain the following sharpened version of
Corollary~\ref{cor:approxMitchellRank}.

\begin{corollary}\label{cor:MitchellRank}
If \(\alpha\) is an ordinal and $\M$ is practical for $\kappa$, then, in $\M$, we have
$\oR(\kappa)\geq\alpha$ if and only if every $A\subseteq\kappa$ and every $\beta<\alpha$, there
is an $A$-good Ramsey measure $W$ on $\kappa$ with $N_W\models \oR(\kappa)=\beta$. The same
result holds for strongly Ramsey and super Ramsey measures.
\end{corollary}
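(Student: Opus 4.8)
The plan is to derive the equivalence directly from Corollary~\ref{cor:approxMitchellRank} and Theorem~\ref{th:wellBehavedCollections}, since between them these two results already contain everything needed; the corollary is essentially just a repackaging that upgrades the inequality \(N_W\models\oR(\kappa)\geq\beta\) to the equality \(N_W\models\oR(\kappa)=\beta\). First I would dispose of the backward direction, which is essentially immediate: if every \(A\subseteq\kappa\) and every \(\beta<\alpha\) admit an \(A\)-good Ramsey measure \(W\) with \(N_W\models\oR(\kappa)=\beta\), then in particular \(N_W\models\oR(\kappa)\geq\beta\) for all such \(A,\beta\), and Corollary~\ref{cor:approxMitchellRank} then yields \(\oR(\kappa)\geq\alpha\) at once.

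For the forward direction, I would fix \(A\subseteq\kappa\) and \(\beta<\alpha\) and work under the assumption \(\oR(\kappa)\geq\alpha\). Since \(\oR(\kappa)\geq\alpha>\beta\), we have \(\oR(\kappa)>\beta\), so Theorem~\ref{th:wellBehavedCollections}, applied with \(\beta\) in place of \(\alpha\), supplies a witness collection \(\U\) of Ramsey measures with \(\oR(\U)=\beta\) such that \(N_U\models\oR(\kappa)=\beta\) for every \(U\in\U\). Because \(\U\) is a \emph{witness} collection, it contains an \(A\)-good Ramsey measure \(W\), and for this \(W\) we already have \(N_W\models\oR(\kappa)=\beta\), which is exactly what is required. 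This completes both directions.

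The one point demanding care is the strictness of the hypothesis in Theorem~\ref{th:wellBehavedCollections}, which requires \(\oR(\kappa)>\beta\) rather than merely \(\oR(\kappa)\geq\beta\); this is precisely what the strict inequality \(\beta<\alpha\) combined with \(\oR(\kappa)\geq\alpha\) delivers, so no extra work is needed. I would also remark that the quantifier structure of the statement asks only for the existence of \(W\) for each pair \((A,\beta)\) separately, so we are free to invoke Theorem~\ref{th:wellBehavedCollections} afresh for each \(\beta\) and are never forced to produce a single collection handling all \(\beta\) simultaneously. The strongly Ramsey and super Ramsey cases go through verbatim, invoking the corresponding cases of Corollary~\ref{cor:approxMitchellRank} and Theorem~\ref{th:wellBehavedCollections}. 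There is no genuine obstacle here beyond correctly matching up the strict and non-strict inequalities in the two cited results.
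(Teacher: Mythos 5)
Your proposal is correct and matches the paper's intended argument: the paper states this corollary without proof as the ``sharpened version'' of Corollary~\ref{cor:approxMitchellRank} made possible by Theorem~\ref{th:wellBehavedCollections}, and your derivation --- backward direction by weakening equality to inequality and citing Corollary~\ref{cor:approxMitchellRank}, forward direction by applying Theorem~\ref{th:wellBehavedCollections} to each $\beta<\alpha$ separately and extracting an $A$-good measure from the resulting witness collection --- is exactly that intended argument, including the correct handling of the strict inequality $\oR(\kappa)>\beta$ needed to invoke the theorem.
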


We end the discussion of basic properties of the M-order on witness collections by showing
that strongly Ramsey cardinals have the maximum Ramsey rank, super Ramsey cardinals have the
maximum strongly Ramsey rank and measurable cardinals have the maximum super Ramsey rank.

\begin{theorem}$\,$
\begin{enumerate}
\item If $\kappa$ is strongly Ramsey, then $\oR(\kappa)=\kappa^+$.
\item If $\kappa$ is super Ramsey, then $\oSR(\kappa)=\kappa^+$.
\item If $\kappa$ is measurable, then $\oSuR(\kappa)=\kappa^+$.
\end{enumerate}
\end{theorem}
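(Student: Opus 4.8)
The plan is to prove in each of the three cases that the relevant rank reaches every ordinal below $\kappa^\plus$; since the M-rank of any large cardinal is at most $\kappa^\plus$, this yields equality. I would run a single induction on $\alpha<\kappa^\plus$ driven by the recursive description of the rank in Corollary~\ref{cor:MitchellRank}, where the engine is a \emph{reflection lemma}: a measure of the stronger type, of which the hypothesis supplies an $A$-good instance for every $A$, has an ultrapower $N_U$ that correctly sees $\kappa$ with the weaker property and with the same rank. Concretely I would establish $(\star_\alpha)$: for every $A\of\kappa$ there is an $A$-good measure $U$ of the relevant type with $N_U\models o_\prop(\kappa)\geq\alpha$. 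Granting $(\star_\beta)$ for all $\beta<\alpha$, Corollary~\ref{cor:MitchellRank} immediately gives $o_\prop^V(\kappa)\geq\alpha$, and iterating to $\kappa^\plus$ finishes. The base case in each part is that the stronger hypothesis implies the weaker one (strongly Ramsey $\Rightarrow$ Ramsey, super Ramsey $\Rightarrow$ strongly Ramsey, measurable $\Rightarrow$ super Ramsey), so $o_\prop(\kappa)\geq 1$.

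The easy cases are (2) and (3), where the reflection lemma is pure elementarity. For (3) a measurable $\kappa$ with normal measure $\mu$ yields, for each $A$, an $A$-good super Ramsey measure $U=\mu\cap M$ with $M\prec\her\kappa$ a $\kappa$-model containing $A$; for (2) a super Ramsey $\kappa$ provides $A$-good super Ramsey measures directly. In both cases $M_U\prec\her\kappa$ and $M_U=\her\kappa^{N_U}$. The point is that, by Corollary~\ref{cor:approxMitchellRank}, $o_\prop(\kappa)\geq\alpha$ is computed inside $\her\kappa$ and is therefore $\her\kappa$-absolute; choosing $U$ also $E$-good for a well-order $E$ of $\kappa$ of order-type $\alpha$ (so that $\alpha\in M_U$), elementarity gives
\[
N_U\models o_\prop(\kappa)\geq\alpha\iff M_U\models o_\prop(\kappa)\geq\alpha\iff o_\prop^V(\kappa)\geq\alpha .
\]
Thus $(\star_\alpha)$ follows verbatim from the inductive conclusion $o_\prop^V(\kappa)\geq\alpha$, and the induction closes.

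The hard case is (1), the Ramsey rank of a strongly Ramsey cardinal, precisely because a strong Ramsey measure's model $M_U$ is only a $\kappa$-model and need not be elementary in $\her\kappa$, so the displayed equivalence is unavailable. Here I would exploit two features flagged earlier. First, since $M_U$ is a $\kappa$-model, $N_U$ is closed under $\omega$-sequences, so $N_U$ correctly computes the otherwise non-absolute $\omega_1$-intersecting property (this is exactly the difficulty noted after Lemma~\ref{lem:rankCertifiedMeasures}). Second, certified Ramsey measures are absolute and are coded by a single subset $I\of\kappa$, so the demand that $N_U$ \emph{contain} a certified witness reduces to the modest demand that $M_U$ contain $I$. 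I would then prove $(\star_\alpha)$ by producing an $A$-good strong Ramsey measure $U$ whose model $M_U$ contains, for every $B\in\her\kappa^{M_U}$ and every $\beta<\alpha$, the certifying set $I_{B,\beta}$ of a $B$-good certified Ramsey measure $W_{B,\beta}$ with $N_{W_{B,\beta}}\models\oR(\kappa)\geq\beta$; such $W_{B,\beta}$ exist since $(\star_\beta)$ yields (via Theorem~\ref{th:rankMeasures}(1)) a witness collection of rank $\geq\beta$, which the certified refinement in the proof of Theorem~\ref{th:rankMeasures}(2) converts into $B$-good certified witnesses. By $\omega$-closure and absoluteness of certified measures, $N_U$ recognizes each reconstructed $W_{B,\beta}$ as a genuine $B$-good Ramsey measure with $N_{W_{B,\beta}}\models\oR(\kappa)\geq\beta$, so Corollary~\ref{cor:MitchellRank} applied inside $N_U$ delivers $N_U\models\oR(\kappa)\geq\alpha$.

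The main obstacle is the construction of this single $U$, since it must simultaneously (a) carry a weakly amenable $\omega_1$-intersecting measure and (b) be closed under the assignment $(B,\beta)\mapsto I_{B,\beta}$. These demands conflict: weak amenability of $\overline U\restrict M_U$ requires $M_U$ to capture the restrictions $\overline U\cap x$, while (b) requires $M_U$ to absorb externally chosen witnesses, and—unlike in (2),(3)—strong Ramseyness furnishes no model elementary in $\her\kappa$ to do this for free. I would resolve it by fixing an ambient strong Ramsey measure $\overline U$ on $\overline M$ and carrying out the interleaved $\leqkappa$-length construction of Lemma~\ref{lem:omegaspecial} in the language augmented by a predicate for the amenable $\overline U$ and by the subset-of-$\kappa$-valued witness assignment, taking $M_U=\bigcup_{\xi<\kappa}M_\xi$ as a continuous chain of transitive $\ltkappa$-closed $\kappa$-sized elementary pieces with $\overline U\cap M_\xi\in M_{\xi+1}$ and $I_{B,\beta}\in M_{\xi+1}$ for $B\in M_\xi$. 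The delicate point is guaranteeing that the ambient $\overline M$ already contains the needed $I_{B,\beta}$, so that the pieces can absorb them; this is arranged by choosing $\overline U$ whose model internally reflects the inductive statement (affordable because each witness is a single subset of $\kappa$ and a $\kappa$-model may hold $\kappa$-many), which itself calls for a preliminary fixed-point construction of $\overline M$ together with its measure. Verifying that $M_U$ is genuinely $\ltkappa$-closed, that $U=\overline U\cap M_U$ is a weakly amenable $M_U$-ultrafilter, and that the closure survives passage to $N_U$ is where the real work lies. Finally, assembling the $A$-good measures from $(\star_\alpha)$ into witness collections and invoking Theorem~\ref{thm:RankInUltrapower}(1) shows the ranks are cofinal in $\kappa^\plus$, completing all three parts.
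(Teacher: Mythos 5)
Your parts (2) and (3) are essentially correct and run on the same engine as the paper's proof: since the measure's model satisfies $M_U\prec\her{\kappa}$ and $M_U=\her{\kappa}^{N_U}$, the recursive characterization of the rank (Corollary~\ref{cor:approxMitchellRank}) transfers between $V$, $M_U$, and $N_U$. The paper packages this as a single contradiction (if $o_{\prop}(\kappa)=\alpha<\kappa^\plus$, then every such ultrapower already satisfies $o_{\prop}(\kappa)\geq\alpha$, forcing $o_{\prop}(\kappa)>\alpha$) rather than your bottom-up induction, but that difference is organizational, not substantive.

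The genuine gap is in part (1), at precisely the step you defer as ``a preliminary fixed-point construction of $\overline M$ together with its measure.'' Your plan needs a strong Ramsey measure $\overline U$ whose $\kappa$-model $\overline M$ already contains a rank-$\geq\beta$ witness for every $B\in\pset(\kappa)^{\overline M}$ and every $\beta<\alpha$. (A side issue: the certifying set $I_{B,\beta}$ alone does not code the measure $W_{B,\beta}$, since it does not determine $M_{W_{B,\beta}}$; you would need a code for $W_{B,\beta}$ itself, though that is also a single subset of $\kappa$, so this is repairable.) The unrepairable problem is that strong Ramseyness only supplies, for each \emph{fixed} set $A$ of hereditary size at most $\kappa$, \emph{some} $\kappa$-model containing $A$ with a weakly amenable ultrafilter; it gives no means of demanding that the model be closed under an external assignment $B\mapsto I_{B,\beta}$ defined on the model's own subsets of $\kappa$. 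The natural iteration (get $M^0$ with measure $U^0$; code $M^0$ and witnesses for its sets into $A_1$; get $M^1\ni A_1$; repeat; take the union) fails on both counts: the union of an $\omega$-chain of $\kappa$-models need not be closed even under $\omega$-sequences, and the measures produced at the successive stages are unrelated, so there is no measure on the union. Your chain construction inside a fixed $\overline M$ merely pushes the same closure demand onto $\overline M$, the regress you name but do not break — and unlike part (3), where the ambient normal measure $\mu$ measures \emph{all} subsets of $\kappa$ so that arbitrary witnesses can be absorbed, here $\overline U$ only measures $\pset(\kappa)^{\overline M}$. The paper avoids the fixed point altogether with a different idea: for an arbitrary $\{A,\alpha\}$-good strong Ramsey measure $U$, it shows that $\kappa$ is \emph{weakly super Ramsey} inside $N_U$, by running the construction of Lemma~\ref{lem:omegaspecial} to build, for each $B\in\pset(\kappa)^{N_U}$, a weak $\kappa$-model $M\prec\her{\kappa}^{N_U}=M_U$ with measure $W=U\cap M$; weak amenability of $U$ puts each approximation $U\cap M_n$ into $M_U$, and the $\kappa$-model closure of $N_U$ puts the $\omega$-unions into $N_U$. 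The internal witnesses are thus restrictions of $U$ itself, not externally certified rank-$\beta$ measures, and a separate elementarity/contradiction argument (weakly super Ramsey implies maximal Ramsey rank) then gives $N_U\models\oR(\kappa)=\kappa^\plus$ outright. That ultrapower-internal manufacture of witnesses is the idea missing from your proposal; once it is in place, your induction is no longer needed.
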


\begin{proof}
Let us introduce an intermediate large cardinal property between Ramsey and super Ramsey
cardinals by removing the $\kappa$-model assumption from the definition of super Ramsey
cardinals. Call a cardinal \emph{weakly super Ramsey} if for every $A\subseteq\kappa$, there
is a weak $\kappa$-model $M\prec\her{\kappa}$ containing $A$ for which there is a weakly
amenable $\omega_1$-intersecting $M$-ultrafilter on $\kappa$. We will argue that a weakly
super Ramsey $\kappa$ must have maximum Ramsey rank. Suppose not, meaning that
$\oR(\kappa)=\alpha<\kappa^+$. By Corollary~\ref{cor:MitchellRank} there is in
$\her{\kappa}$, for every $\beta<\alpha$, an $A$-good Ramsey measure $W$ with
$N_W\models\oR(\kappa)=\beta$. Fix $A\subseteq\kappa$ and let $M\prec\her{\kappa}$ be a weak
$\kappa$-model containing $A$ and $\alpha$ for which there is a weakly amenable
$\omega_1$-intersecting $M$-ultrafilter on $\kappa$. Let $N$ be the ultrapower of $M$ by
$U$. If $\beta<\alpha$, then, by elementarity, $M$ satisfies that for every
$B\subseteq\kappa$, there is a Ramsey measure $W$ on $\kappa$ with
$N_W\models\oR(\kappa)=\beta$, and so $N$ must satisfy this as well. But then
Corollary~\ref{cor:MitchellRank} implies $N\models\oR(\kappa)\geq\alpha$. Thus, we have
shown that for every $A\subseteq\kappa$, there is an \(A\)-good Ramsey measure $U$ on
$\kappa$ with $N_U\models\oR(\kappa)=\alpha$, which means that $\oR(\kappa)>\alpha$,
contradicting our assumption.

Let $\kappa$ be strongly Ramsey. We will show that for every $\alpha<\kappa^+$, there is a
witness collection of Ramsey measures on $\kappa$ of rank $\alpha$. Fix $A\subseteq\kappa$
and let $U$ be an $\{A,\alpha\}$-good strong Ramsey measure on $\kappa$. We will now argue
that $\kappa$ is weakly super Ramsey in $N_U$. Fix $B\subseteq\kappa$ in $N_U$. Using the
construction from the proof of Lemma~\ref{lem:omegaspecial}, we obtain a sequence
$\la (M_n,W_n)\mid n<\omega\ra$ such that $B\in M=\Union_{n<\omega}M_n$ is elementary in
$M_U$ and $W=\Union_{n<\omega}W_n$ is a weakly amenable $M$-ultrafilter on $\kappa$. Since
$N_U$ is a $\kappa$-model both $M$ and $W$ are in $N_U$. Thus, we have verified that
$\kappa$ is weakly super Ramsey in $N_U$, and so it follows that
$N_U\models\oR(\kappa)=\kappa^+$, so in particular $N_U\models\oR(\kappa)\geq\alpha$. But
this means that for every $A\subseteq\kappa$, there is an $A$-good Ramsey measure $U$ with
$N_U\models\oR(\kappa)\geq\alpha$, from which it follows that there is a witness collection
of Ramsey measures on $\kappa$ of rank $\alpha$, as required.

To show that strong Ramsey cardinals have maximum strong Ramsey rank, we just mimic the
argument that weakly super Ramsey cardinals have maximum Ramsey rank. To show that
measurable cardinals have maximum super Ramsey rank, we use that measurable cardinals are
super Ramsey and repeat the same argument.
\end{proof}

\noindent Note that we didn't need that $\kappa$ is strongly Ramsey in the argument that
\(\oR(\kappa)=\kappa^+\), but merely that $\kappa$ is $\omega$-closed Ramsey
($M^\omega\subseteq M$), which gives a lower bound on the strength of having maximum Ramsey
rank. In fact, the proof shows that \(\omega\)-closed Ramsey cardinals are stationary limits
of Ramsey cardinals of maximal Ramsey rank.

\section{Extensions with cover and approximation properties cannot increase Ramsey or
Ramsey-like rank}

In \cite{hamkins:coverandapproximations}, Hamkins developed general techniques to show that
if $V\subseteq V'$ has the $\delta$-cover and $\delta$-approximation properties for some
regular cardinal $\delta$ of $V'$, then for most large cardinal properties $V'$ cannot have
new large cardinals of that type above $\delta$. The techniques cannot be applied directly
to Ramsey or Ramsey-like cardinals because, for the smaller large cardinals, they require
embeddings to exist for all transitive models of size $\kappa$ (as in the case of weakly
compact cardinals), and in particular for all $\kappa$-models, which we know is not the case
for Ramsey or Ramsey-like cardinals. Nevertheless, we will be able to adapt the machinery
used in the proofs of theorems in \cite{hamkins:coverandapproximations} to the situation of
our cardinals. We will show that if $V\subseteq V'$ has the $\delta$-cover and
$\delta$-approximation properties (for some regular $\delta$ of $V'$) and $\kappa>\delta$
has Ramsey (or Ramsey-like) rank $\alpha$ in $V'$, then it had at least rank $\alpha$ in
$V$. The significance of the result lies in applying it to forcing extensions to show that
no new Ramsey or Ramsey-like cardinals of any rank were created. Although it is easy to show
that Ramsey cardinals cannot be created by small forcing, it was not previously known
whether the result generalized to all extensions with cover and approximation properties. We
begin by recalling the definition of cover and approximation properties and their connection
to forcing extensions.

\begin{definition}[Hamkins \cite{hamkins:coverandapproximations}]
\label{def:coverandextprop}
Suppose $V\subseteq V'$ are transitive (set or class) models of (some fragment of) $\ZFC$
and $\delta$ is a cardinal in $V'$.
\begin{enumerate}
\item The pair $V\subseteq V'$ satisfies the $\delta$-\emph{cover property} if for every
$X\in V'$ with $X\subseteq V$ and $|X|^{V'}<\delta$, there is $Y\in V$ with $X\subseteq Y$
and $|Y|^V<\delta$.

\item The pair $V\subseteq V'$ satisfies the $\delta$-\emph{approximation property} if
whenever $X\in V'$ with $X\subseteq V$ and $X\cap x\in V$ for every $x$ of size less than
$\delta$ in $V$, then $X\in V$.
\end{enumerate}
\end{definition}

\noindent If $\p$ is a forcing notion of size at most $\delta$, then the pair
$V\subseteq V[G]$, where $G$ is $V$-generic for $\p$, has the $\delta^+$-cover and
$\delta^+$-approximation properties. We say that a poset $\p$ has a \emph{closure point} at
a cardinal $\delta$ if $\p$ factors as $\mathbb R*\dot \q$, where $\mathbb R$ is
nontrivial\footnote{Here, a poset is \emph{nontrivial} if it necessarily adds a new set.}
of size at most $\delta$ and
$\forces_\mathbb R \dot \q\text{ is strategically }\leq\delta\text{-closed}$. We then have:

\begin{theorem}[Hamkins]\label{th:gapcoverapprox}
If $\p$ is a forcing notion with a closure point at $\delta$, then the pair
$V\subseteq V[G]$ satisfies the $\delta^+$-cover and $\delta^+$-approximation properties for
any forcing extension $V[G]$ by $\p$.
\end{theorem}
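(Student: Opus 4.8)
The plan is to exploit the factorization $\p = \mathbb{R} * \dot\q$ guaranteed by the closure point, writing the generic as $G = G_0 * G_1$ and setting $W = V[G_0]$, so that $V \of W \of V[G]$. I would establish the $\delta^+$-cover and $\delta^+$-approximation properties separately for the small first step $V \of W$ and for the strategically closed second step $W \of V[G]$, and then compose them. The composition is where one must be slightly careful, since the two approximation properties are stated relative to different ground models, and bridging them uses the cover property of the first step.

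For the small step, $|\mathbb{R}| \leq \delta$. I would verify the $\delta^+$-cover property by a direct counting argument: given $X \in W$ with $X \of V$ and $|X|^W \leq \delta$, enumerate $X$ as a $\delta$-sequence with a name $\dot f$, and let $Y \in V$ collect all ordinals $\beta$ for which some condition forces $\dot f(\xi) = \beta$ for some $\xi < \delta$; since $\mathbb{R}$ has at most $\delta$ conditions, each $\dot f(\xi)$ has at most $\delta$ possible values, so $|Y|^V \leq \delta$ while $X \of Y$. The $\delta^+$-approximation property for small forcing is the more delicate half; it is a standard lemma of Hamkins \cite{hamkins:coverandapproximations}, proved by assuming a condition forces $\dot X \of \check V$, $\dot X \notin \check V$, and that all small approximations of $\dot X$ lie in $V$, and deriving a contradiction from a disagreement argument among the $\leq\delta$ many conditions. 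I would cite this lemma rather than reprove it.

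For the closed step, the cover property is immediate: a strategically $\leq\delta$-closed forcing adds no new $\delta$-sequences of elements of $W$, so any $X \in V[G]$ with $X \of W$ and $|X|^{V[G]} \leq \delta$ is already in $W$ and covers itself. The $\delta^+$-approximation property for $W \of V[G]$ is again the hard half: assuming some condition forces $\dot X \of \check W$ with all $\delta$-approximations in $W$ but $\dot X \notin \check W$, I would use the winning strategy for the closure game to build a descending sequence of conditions of length $\delta$ that repeatedly splits on the membership of freshly chosen ordinals in $\dot X$, using $\dot X \notin \check W$ to guarantee a genuine disagreement at each stage. The strategy supplies a lower bound sitting below two incompatible continuations that force $\dot X$ to differ on a set $z$ of size $\leq\delta$ lying in $W$, contradicting that $X \cap z \in W$.

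Finally I would compose. Given $X \in V[G]$ with $X \of V$ and $X \cap z \in V$ for every $z \in V$ of size $\leq\delta$, I first check the hypotheses of the closed-step approximation property over $W$: for any $z' \in W$ of size $\leq\delta$, the cover property of $V \of W$ yields $z \in V$ with $z' \of z$ and $|z|^V \leq \delta$, whence $X \cap z' = (X \cap z) \cap z' \in W$, since $X \cap z \in V \of W$ and $z' \in W$. Thus the closed-step approximation property gives $X \in W$, and then the small-step approximation property gives $X \in V$. The cover property of $V \of V[G]$ composes directly and more easily, by pushing a small set from $V[G]$ into $W$ and then covering it in $V$. I expect the main obstacle to be the two approximation lemmas themselves --- the counting argument for small forcing and especially the strategy-based disagreement argument for strategically closed forcing --- since the cover properties and the composition are comparatively routine.
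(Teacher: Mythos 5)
Your decomposition breaks at the closed step, and the break is not repairable: the lemma you need there is false. A strategically $\leqkappa$... rather, a strategically ${\leq}\delta$-closed forcing need \emph{not} have the $\delta^+$-approximation property over its own ground model. The standard counterexample is $\Add(\delta^+,1)$ as computed in $W$ (conditions are partial functions $\delta^+\to 2$ of size $\leq\delta$): this poset is ${\leq}\delta$-closed, hence strategically ${\leq}\delta$-closed, yet the generic $X\of\delta^+$ is \emph{fresh} --- for every $x\in W$ of size $\leq\delta$, the set $x\cap X$ is contained in some $X\cap\alpha$ with $\alpha<\delta^+$, which is decided by a single condition and so lies in $W$, while $X\notin W$. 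This also pinpoints where your sketched argument for the closed step goes wrong: the splitting construction only shows that no single condition \emph{decides} $\dot X$, i.e.\ that $\dot X$ is not forced equal to a fixed element of $W$. Producing two incompatible continuations that force different values of $\dot X\cap\check z$ is no contradiction, because both candidate values are elements of $W$ and the generic filter simply selects one of them; ``undecided'' is not ``outside $W$''. Freshness is exactly the phenomenon this argument cannot detect, and since the closed-step lemma is false, your composition (whose bridging of hypotheses via the cover property is, in itself, done correctly) cannot yield the theorem.

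A structural symptom of the same problem: your proof never uses the nontriviality of $\mathbb{R}$, but the theorem fails without that hypothesis (take $\mathbb{R}$ trivial, or absent, and $\q=\Add(\delta^+,1)$ as above). The paper itself does not prove this theorem --- it is quoted from Hamkins \cite{hamkins:coverandapproximations} --- and in Hamkins' argument the approximation property is established for the two-step iteration $\mathbb{R}*\dot\q$ \emph{as a whole}, not factor by factor. Roughly: one enumerates the $\leq\delta$ many conditions of $\mathbb{R}$ and builds a single play of the strategy $\la\dot q_\alpha\mid\alpha<\delta\ra$ in the second coordinate, arranging at each stage that the membership of a chosen splitting ordinal in $\dot X$ depends on the $\mathbb{R}$-coordinate while the $\q$-coordinate is held fixed along this play; below the strategic lower bound, the approximation $X\cap z$, where $z$ is the (size $\leq\delta$, ground-model) set of splitting ordinals, then decodes the new set added by $\mathbb{R}$ --- this is precisely where nontriviality enters --- contradicting the hypothesis that $X\cap z\in V$. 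Your small-step ingredients (the counting argument for the cover property, and the cited approximation lemma for forcing of size $\leq\delta$) are fine, but the theorem is genuinely about the interaction of the nontrivial small factor with the closed factor, and any proof must treat the iteration jointly rather than composing properties that each factor would have separately.
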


\noindent Thus, we will be able to show that a large class of forcing notions, namely those
with a closure point less than or equal to the first inaccessible cardinal (or in fact much
higher), cannot create new Ramsey or Ramsey-like cardinals of any  rank.

\subsection{Strongly Ramsey and super Ramsey cardinals}
Most of the work in showing that strong Ramsey rank cannot increase in extensions with cover
and approximation properties goes into showing that strongly Ramsey cardinals cannot be
created in such extensions. Once again, to carry out the inductive arguments, we will need
the statements to be formulated in terms of practical models, with the hypothesis that
$\M$ and $\M'$ are practical for \(\kappa\) and that the pair $\M\subseteq \M'$ has the
$\delta$-cover and $\delta$-approximation properties and the same ordinals.

\begin{lemma}
\label{le:ApproxAndCoveringGoesDownInitialSegments}
Suppose the pair \(V\subseteq V'\) satisfies the \(\delta\)-cover and
\(\delta\)-approximation properties and has the same ordinals.
Let \(\lambda\geq\delta\) be a regular cardinal in \(V'\). Then the pair
\(H_\lambda^V\subseteq H_\lambda^{V'}\) also satisfies the \(\delta\)-cover and
\(\delta\)-approximation properties and has the same ordinals.
\end{lemma}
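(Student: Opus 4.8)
The plan is to reduce each of the three assertions for the pair $H_\lambda^V\subseteq H_\lambda^{V'}$ to the corresponding property of the ambient pair $V\subseteq V'$, using two bookkeeping facts about how $H_\lambda$ interacts with the two models. First, since any bijection in $V'$ witnessing a cardinality is also an element of $V$, every cardinal of $V'$ is a cardinal of $V$, and moreover $\mathrm{cf}^{V'}(\lambda)\leq\mathrm{cf}^{V}(\lambda)$; hence $\lambda$, being regular in $V'$, is a regular cardinal in $V$. Second, for $W\in\{V,V'\}$ and any $Z\in H_\lambda^W$, a bijection of $Z$ with $|Z|^W<\lambda$ is itself hereditarily of size $<\lambda$ and so lies in $H_\lambda^W$; thus $H_\lambda^W$ computes the cardinality of its elements correctly, and in particular decides correctly whether a set has size $<\delta$. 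The inclusion $H_\lambda^V\subseteq H_\lambda^{V'}$ is then automatic, since cardinalities can only drop on passing to $V'$, and the claim about ordinals is immediate: in either model the ordinals of $H_\lambda$ are exactly the ordinals below $\lambda$.

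For the cover property, I would take $X\in H_\lambda^{V'}$ with $X\subseteq H_\lambda^V$ and $|X|^{H_\lambda^{V'}}<\delta$. By the remark above $|X|^{V'}<\delta$, and $X\subseteq V$, so the $\delta$-cover property of $V\subseteq V'$ yields some $Y\in V$ with $X\subseteq Y$ and $|Y|^V<\delta$. This $Y$ need not lie in $H_\lambda^V$, but $Y'=Y\cap H_\lambda^V$ does: it is an element of $V$ of size $<\delta$ (hence $<\lambda$, as $\delta\leq\lambda$) all of whose members are hereditarily of size $<\lambda$. Since $X\subseteq H_\lambda^V$ we still have $X\subseteq Y'$, and $|Y'|^{H_\lambda^V}=|Y'|^V<\delta$, as required.

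For the approximation property, suppose $X\in H_\lambda^{V'}$ with $X\subseteq H_\lambda^V$ and $X\cap x\in H_\lambda^V$ for every $x\in H_\lambda^V$ of size $<\delta$. The first goal is to show $X\in V$ by invoking the $\delta$-approximation property of $V\subseteq V'$. The one subtlety — and the step I expect to be the main obstacle — is that the approximation property of the ambient pair quantifies over \emph{all} small $x\in V$, whereas the hypothesis only supplies approximations by small $x\in H_\lambda^V$. I would bridge this gap by noting that for any $x\in V$ with $|x|^V<\delta$ the set $x'=x\cap H_\lambda^V$ is an element of $H_\lambda^V$ of size $<\delta$ and, because $X\subseteq H_\lambda^V$, satisfies $X\cap x=X\cap x'$; the hypothesis then gives $X\cap x=X\cap x'\in H_\lambda^V\subseteq V$. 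Thus $X\cap x\in V$ for every small $x\in V$, and the $\delta$-approximation property of $V\subseteq V'$ delivers $X\in V$.

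It remains to upgrade $X\in V$ to $X\in H_\lambda^V$, and here I would use that $|X|^{V'}<\lambda$. Were $|X|^V\geq\lambda$, restricting a $V$-bijection of $X$ with $|X|^V$ to $\lambda$ would produce an injection $\lambda\to X$ in $V$, hence in $V'$, contradicting $|X|^{V'}<\lambda$; so $|X|^V<\lambda$. Since every element of $X$ already lies in $H_\lambda^V$ and $\lambda$ is regular in $V$, the transitive closure of $X$ has size $<\lambda$ in $V$, whence $X\in H_\lambda^V$. This establishes all three claims, with the matching of quantifiers in the approximation step and the cardinality-injection argument being the only points requiring genuine care.
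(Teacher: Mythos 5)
Your proof is correct and takes essentially the same approach as the paper's: each property is pulled back to the ambient pair $V\subseteq V'$ and the resulting witnesses are intersected with $H_\lambda^V$, using the regularity of $\lambda$ (which descends to $V$) to see that these intersections lie in $H_\lambda^V$; in fact you are more careful than the paper at the two delicate points, namely bridging the quantifier gap in the approximation step (the paper invokes the ambient $\delta$-approximation property without noting that an arbitrary small $x\in V$ must be replaced by $x\cap H_\lambda^V$) and transferring $|X|^{V'}<\lambda$ to $|X|^V<\lambda$ via the injection argument. One wording slip to fix: your opening justification should say that bijections and cofinal maps in $V$ persist to $V'$ (whence cardinals and regularity descend from $V'$ to $V$), not that bijections in $V'$ lie in $V$ --- the facts you derive are correct, but the stated reason is backwards.
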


\begin{proof}
Suppose that $\alpha$ is an
ordinal in $H_\lambda^{V'}$, meaning that $\alpha<\lambda$. It follows that $\alpha$ has
size less than $\lambda$ in $V$ as well (otherwise, $\lambda$ would not be a cardinal in
$V'$), and hence $\alpha\in H_\lambda^V$, so \(H_\lambda^V\) and \(H_\lambda^{V'}\) have the
same ordinals. If $X\subseteq H_\lambda^V$ is a set of size less
than $\delta$ in $H_\lambda^{V'}$, then, by the \(\delta\)-covering property of
\(V\subseteq V'\) there is a $Y$ of size less than $\delta$ in $V$ with
$X\subseteq Y$. Let $\overline Y=Y\cap H_\lambda^V$. Then $\overline Y$ is a set of size
less than $\delta$ each of whose elements has transitive size less than $\lambda$, and so by
regularity of $\lambda$ we get $\overline Y\in H_\lambda^V$, showing that
\(H_\lambda^V\subseteq H_\lambda^{V'}\) satisfies the \(\delta\)-cover property.
Finally, suppose that
$X\subseteq H_\lambda$ is in $H_\lambda^{V'}$ and $x\cap X\in H_\lambda^V$ for all
$x\in H_\lambda^V$ of size less than \(\delta\) in \(V\). It follows from the
\(\delta\)-approximation property of \(V\subseteq V'\) that $X\in V$. Finally, $X$ has size
less than $\lambda$ in $V'$, but then it must have size less than $\lambda$ in $V$ as well,
showing that \(X\in H_\lambda^V\) and that \(H_\lambda^V\subseteq H_\lambda^{V'}\) satisfies
the \(\delta\)-approximation property.
\end{proof}

This lemma shows that we can deduce results about Ramsey and Ramsey-like ranks in
class-sized extensions with cover and approximation properties from corresponding results
about extensions of practical models, since we may always restrict to sufficiently large
\(H_\lambda\).

\begin{theorem}\label{th:noNewStrongOrSuperRamsey}
Suppose that $\M$ and $\M'$ are practical for \(\kappa\) and that
$\M\subseteq \M'$ has the $\delta$-cover and $\delta$-approximation properties for some
regular cardinal $\delta$ of $\M'$ and the same ordinals. If $\kappa>\delta$ is strongly
Ramsey in $\M'$, then $\kappa$ was already strongly Ramsey in $\M$. The same result holds
for super Ramsey cardinals.
\end{theorem}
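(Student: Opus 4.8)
The plan is to adapt Hamkins's machinery to the set-sized embeddings characterizing strongly and super Ramsey cardinals. Working in $\M'$, fix $A\of\kappa$ lying in $\M$; the goal is to produce, inside $\M$, a $\kappa$-model containing $A$ together with a weakly amenable $\omega_1$-intersecting ultrafilter for it. First I would check that $\kappa$ remains inaccessible in $\M$: regularity is preserved because a cofinal map $\gamma\to\kappa$ with $\gamma<\kappa$ in $\M$ would also lie in $\M'$, and inaccessibility follows since $\pset(\gamma)^\M\of\pset(\gamma)^{\M'}$ forces $(2^\gamma)^\M\leq(2^\gamma)^{\M'}<\kappa$; thus $\kappa^{\ltkappa}=\kappa$ in $\M$ and $\kappa$-models exist there. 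Using strong Ramseyness in $\M'$, fix a $\kappa$-model $M'$ with $A\in M'$ and a weakly amenable $M'$-ultrafilter $U'$, with ultrapower $j'\colon M'\to N'$. Note that $U'$, being normal, is $\kappa$-complete for sequences in $M'$, and recall that weak amenability gives $U'\cap Z\in M'$ whenever $Z\in M'$ has size at most $\kappa$ in $M'$.

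The heart of the argument is the following transfer lemma, which is where the approximation property enters: for every family $P\in\M$ with $P\of\pset(\kappa)^{M'}$ one has $U'\cap P\in\M$. I would prove this by the $\delta$-approximation property, checking that every small approximation $U'\cap x$, for $x\in\M$ with $|x|^\M<\delta$ and $x\of P$, already lies in $\M$. Since $x$ is a $\ltkappa$-sized subset of $M'$ and $M'$ is closed under $\ltkappa$-sequences, $x\in M'$, so $U'\cap x\in M'$ by weak amenability. The Boolean algebra generated by $x$ has fewer than $\kappa$ atoms (as $\kappa$ is inaccessible), each of which is a Boolean combination of members of $x$ and hence lies in both $M'$ and $\M$; by $\kappa$-completeness of $U'$ for sequences in $M'$, exactly one such atom $a$ belongs to $U'$, and then $U'\cap x=\{X\in x\mid a\of X\}$. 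As $a,x\in\M$, this set is in $\M$, completing the approximation. This step --- reducing each small fragment of the external ultrafilter to a single atom that $\M$ can recognize --- is the main obstacle, and the place where $\kappa$-completeness is indispensable.

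With the transfer lemma in hand I would assemble the witness in $\M$. I would build a $\kappa$-model $M\in\M$ with $A\in M$, $V_\kappa^M=V_\kappa^\M$, and $\pset(\kappa)^M\of\pset(\kappa)^{M'}$, arranged moreover so that $M\in M'$ (one codes $\pset(\kappa)^M$ and $V_\kappa^\M$ by a subset of $\kappa$ and chooses $M'$ good for that code; the apparent circularity between $M$ and $M'$ is broken by a short bootstrapping, building $M$ as an increasing union $\Union_n M_n$ in $\M$ whose subsets of $\kappa$ are closed under the fragments $U'\cap Z$, each of which lies in $\M$ by the transfer lemma and in $M'$ by weak amenability). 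Setting $U=U'\cap\pset(\kappa)^M$, the transfer lemma gives $U\in\M$; since $\pset(\kappa)^M\of\pset(\kappa)^{M'}$, $U$ is an ultrafilter on $\pset(\kappa)^M$, and it is normal and $\omega_1$-intersecting because these are inherited directly from $U'$. Weak amenability of $U$ to $M$ follows from the synergy of the two properties: for $Z\in M$ of size at most $\kappa$, the fragment $U'\cap Z$ lies in $M'$ by weak amenability of $U'$ and in $\M$ by the transfer lemma, and as $M$ was built closed under such fragments, $U\cap Z\in M$. Thus $(M,U)$ witnesses that $\kappa$ is strongly Ramsey in $\M$.

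For super Ramsey cardinals I would run the same construction one level up. By Lemma~\ref{le:ApproxAndCoveringGoesDownInitialSegments} the pair $\her{\kappa}^\M\of\her{\kappa}^{\M'}$ again has the $\delta$-cover and $\delta$-approximation properties, so I may carry out the argument there; taking $M'\prec\her{\kappa}^{\M'}$ from super Ramseyness and building $M$ by Skolem closure inside $\her{\kappa}^\M$, I would arrange $M\prec\her{\kappa}^\M$ in addition to the above, transferring the elementarity exactly as in the remark that a $\kappa$-model elementary in $\her{\kappa}$ of an ultrapower reflects to one elementary in $\her{\kappa}$. The only delicate point beyond the strongly Ramsey case is ensuring that this elementarity survives the passage through $M'$, which again rests on the transfer lemma.
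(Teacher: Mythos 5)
The transfer lemma at the heart of your proposal is correct, and it is essentially the same argument the paper uses to show that the relevant ultrafilter fragment lies in $\M$ (the paper intersects the fewer-than-$\delta$-many sets using $\kappa$-intersectingness and picks a point $\beta$ in the intersection; you pick an atom of the generated Boolean algebra --- the same idea). The genuine gap is in how you assemble the witness inside $\M$, and it is twofold. First, the model you build, $M=\bigcup_{n<\omega}M_n$, an increasing union of an $\omega$-chain closed under ultrafilter fragments, is at best a weak $\kappa$-model, not a $\kappa$-model: such a union need not be closed even under $\omega$-sequences, let alone $\ltkappa$-sequences, since a sequence meeting infinitely many of the differences $M_{n+1}\setminus M_n$ cannot be an element of any $M_n$ and hence (by transitivity) not of the union. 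Closure under $\ltkappa$-sequences is exactly what separates strongly Ramsey from Ramsey, so this construction cannot witness strong Ramseyness; no short ``bootstrapping'' fixes this, because the defect is structural to countable-length unions.

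Second, and independently, the recursion producing your chain needs $U'$ at every step, so it can only be carried out in $\M'$. Your transfer lemma places each individual fragment $U'\cap Z$ in $\M$, but it does not give $\M$ the function $Z\mapsto U'\cap Z$, and nothing in your argument puts the externally built chain, or its union, into $\M$. This is precisely the obstruction that forces the paper to add the hypothesis $\M^\omega\subseteq\M$ in $\M'$ when treating Ramsey cardinals in Theorem~\ref{th:noNewRamsey} --- a hypothesis absent from the present theorem, which is why your approach cannot be repaired by simply accepting a weak $\kappa$-model. The paper's proof avoids both problems by a device your proposal lacks entirely: choose the measure $W$ to be $\{A,V_\kappa^{\M}\}$-good, apply the ultrapower map $j$ to the pair $V_\kappa^{\M}\subseteq V_\kappa^{\M'}$, obtaining by elementarity a second pair $N=j(V_\kappa^{\M})\subseteq N'=j(V_\kappa^{\M'})$ with the $\delta$-cover and $\delta$-approximation properties inside the ultrapower, and then run Hamkins's bouncing arguments between the two pairs --- arguments that use the $\delta$-cover property essentially, whereas your proof uses only approximation --- to conclude that $N=\M\cap N'$, that $N\in\M$, that $N$ is closed under $\ltkappa$-sequences in $\M$, and that $W\cap N\in\M$. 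Only after all of this does one shrink to a $\kappa$-model, and crucially that shrinking takes place entirely inside $\M$, where $N$ and the fragment $W\cap N$ are already available as elements, so no externally defined recursion needs to be imported into $\M$. The same missing ingredient undermines your super Ramsey paragraph, which rests on the same construction.
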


\noindent The proof adapts techniques developed in \cite{hamkins:coverandapproximations} to
the embeddings characterizing strongly Ramsey cardinals. We will note in the course of the
argument where the constructions occurred in \cite{hamkins:coverandapproximations}.

\begin{proof}
First, suppose that $\kappa>\delta$ is strongly Ramsey in $\M'$. Fix $A\subseteq\kappa$ in
$\M$. We need to show that $\M$ has a $\kappa$-model $M$ containing $A$ and a weakly amenable
$M$-ultrafilter $U$ on $\kappa$. In $\M'$, let $W$ be an $\{A,V_\kappa^{\M}\}$-good strong
Ramsey measure and let $j:M_W\to N_W$ be the ultrapower by
$W$. Note that both $V_\kappa^{\M'}$ and $V_\kappa^{\M}$ are $\ZFC$-models since $\kappa$ is
inaccessible in $\M'$ and therefore also in $\M$.

\begin{claim}\label{cl:VkappaCoverApprox}
The pair $V_\kappa^{\M}\subseteq V_\kappa^{\M'}$ has the $\delta$-cover and $\delta$-approximation properties.
\end{claim}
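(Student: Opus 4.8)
The plan is to derive the claim directly from Lemma~\ref{le:ApproxAndCoveringGoesDownInitialSegments}. The key observation is that since $\kappa$ is inaccessible in $\M'$ (and hence in $\M$, as already noted above), we have $V_\kappa = H_\kappa$ in both models; that is, $V_\kappa^{\M} = H_\kappa^{\M}$ and $V_\kappa^{\M'} = H_\kappa^{\M'}$. Thus it suffices to establish the $\delta$-cover and $\delta$-approximation properties for the pair $H_\kappa^{\M} \subseteq H_\kappa^{\M'}$.

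To apply Lemma~\ref{le:ApproxAndCoveringGoesDownInitialSegments} with $V = \M$, $V' = \M'$, and $\lambda = \kappa$, I would check its hypotheses: the pair $\M \subseteq \M'$ satisfies the $\delta$-cover and $\delta$-approximation properties and has the same ordinals by assumption, and $\kappa$ is a regular cardinal of $\M'$ with $\kappa > \delta$, in particular $\kappa \geq \delta$, since $\kappa$ is inaccessible there. The lemma then yields that $H_\kappa^{\M} \subseteq H_\kappa^{\M'}$ has the $\delta$-cover and $\delta$-approximation properties, which by the identification above is precisely the claim.

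There is essentially no obstacle here beyond verifying the inaccessibility-based identity $V_\kappa = H_\kappa$ in each model; the substantive content was already carried out in the proof of Lemma~\ref{le:ApproxAndCoveringGoesDownInitialSegments}. Should one prefer a self-contained argument, the same reasoning can be repeated in place: given $X \subseteq V_\kappa^{\M}$ of size less than $\delta$ in $V_\kappa^{\M'}$, cover it by some $Y \in \M$ of $\M$-size less than $\delta$ using the $\delta$-cover property of $\M \subseteq \M'$, set $\overline Y = Y \cap V_\kappa^{\M}$, and use the regularity of $\kappa$ to conclude that $\overline Y \in V_\kappa^{\M}$; and for approximation, any $X \subseteq V_\kappa^{\M}$ lying in $V_\kappa^{\M'}$ with all its small approximations in $\M$ lies in $\M$ by the $\delta$-approximation property of $\M \subseteq \M'$, and then lies in $V_\kappa^{\M}$ by a rank computation, again using inaccessibility of $\kappa$.
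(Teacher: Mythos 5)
Your proposal is correct and is essentially identical to the paper's proof, which simply cites Lemma~\ref{le:ApproxAndCoveringGoesDownInitialSegments}; you have merely made explicit the routine identification $V_\kappa^{\M}=H_\kappa^{\M}$ and $V_\kappa^{\M'}=H_\kappa^{\M'}$ (valid since $\kappa$ is inaccessible in $\M'$ and hence in $\M$) that the paper leaves implicit when applying the lemma with $\lambda=\kappa$.
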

\begin{proof}
This is just Lemma~\ref{le:ApproxAndCoveringGoesDownInitialSegments}.
%
\end{proof}

\noindent Thus, by elementarity, $N_W$ satisfies that the pair
$N=j(V_\kappa^{\M})\subseteq j(V_\kappa^{\M'})=N'$ has the $\delta$-cover and
$\delta$-approximation properties, and it is correct about this. Observe that
$N'=V_{j(\kappa)}^{N_W}$, and therefore is a $\kappa$-model in $\M'$ since $N_W$ is a
$\kappa$-model in $\M'$ (it is not difficult to see that the ultrapower of $\kappa$-model is
always a $\kappa$-model). The next several claims have the aim to conclude that $N$ and
$W\cap N$ are in $\M$.

\begin{claim}\label{cl:sameVkappa}
$V_\kappa^N=V_\kappa^{\M}$.
\end{claim}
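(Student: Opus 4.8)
The plan is to exploit that the ultrapower map \(j\colon M_W\to N_W\) has critical point \(\kappa\), together with its elementarity and the fact that \(N=j(V_\kappa^{\M})\). The guiding idea is that the cumulative hierarchy below \(\kappa\) is definable inside \(M_W\) from the single parameter \(V_\kappa^{\M}\), and since everything involved has rank below the critical point, applying \(j\) will transport this computation into \(N\) without moving anything. I would deduce the claim by showing \(V_\alpha^N=V_\alpha^{\M}\) for each \(\alpha<\kappa\) and then taking a union.

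First I would record that for every ordinal \(\alpha<\kappa\) the set \(V_\alpha^{\M}\) is itself an element of \(M_W\). Indeed, \(V_\kappa^{\M}\in M_W\) because \(W\) was chosen \(\{A,V_\kappa^{\M}\}\)-good, and \(V_\alpha^{\M}\) is definable from this parameter and \(\alpha\) as \(\{x\in V_\kappa^{\M}\mid \operatorname{rank}(x)<\alpha\}\), using that rank is absolute for the transitive set \(V_\kappa^{\M}\). Moreover, since \(\operatorname{rank}(V_\alpha^{\M})=\alpha<\kappa=\operatorname{crit}(j)\), the map \(j\) fixes \(V_\alpha^{\M}\) pointwise-wise (it lies in \(V_\kappa^{M_W}\)), and of course \(j(\alpha)=\alpha\).

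Next I would apply \(j\) to the statement ``\(V_\alpha^{\M}=\{x\in V_\kappa^{\M}\mid \operatorname{rank}(x)<\alpha\}\)'', which holds in \(M_W\). By elementarity this gives, in \(N_W\), the statement ``\(j(V_\alpha^{\M})=\{x\in j(V_\kappa^{\M})\mid \operatorname{rank}(x)<j(\alpha)\}\)''. Substituting \(j(V_\kappa^{\M})=N\), \(j(\alpha)=\alpha\), and \(j(V_\alpha^{\M})=V_\alpha^{\M}\), this reads \(V_\alpha^{\M}=V_\alpha^{N}\). Finally, as \(\kappa\) is a limit ordinal with \(\kappa<j(\kappa)\), taking the union over \(\alpha<\kappa\) yields \(V_\kappa^{N}=\bigcup_{\alpha<\kappa}V_\alpha^{N}=\bigcup_{\alpha<\kappa}V_\alpha^{\M}=V_\kappa^{\M}\), as desired.

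The only point demanding care—the nearest thing to an obstacle—is the bookkeeping in this elementarity step: one must verify that \(V_\alpha^{\M}\) is genuinely present as a parameter in \(M_W\) (so that \(j\) may be applied to it) and that it is fixed by \(j\), and in particular must avoid conflating \(V_\alpha^{\M}\) with \(V_\alpha^{M_W}=V_\alpha^{\M'}\), which need not coincide with \(V_\alpha^{\M}\). Once the roles of the critical point and of the good-ness of \(W\) (which is exactly what places \(V_\kappa^{\M}\) into \(M_W\)) are kept distinct, the remaining computation is routine.
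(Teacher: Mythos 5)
Your proof is correct and follows essentially the same route as the paper's: both arguments rest on the critical point fact that $j$ fixes every set of rank below $\kappa$ in $M_W$, together with elementarity and the goodness of $W$ (which is what places $V_\kappa^{\M}$ in $M_W$). The only difference is bookkeeping — the paper argues element-by-element, using $V_\kappa^{N_W}=V_\kappa^{M_W}$ to see that each $X\in V_\kappa^N$ lies in $\dom(j)$, whereas you argue level-by-level via $j(V_\alpha^{\M})=V_\alpha^{\M}$ and take a union.
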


\begin{proof}
If $X\in V_\kappa^{\M}$, then $X=j(X)\in j(V_\kappa^{\M})=N$. So $V_\kappa^{\M}\subseteq N$.
Conversely, if $X\in V_\kappa^N$, then note first that \(X\in M_W=\dom(j)\). This is because
\(X\in V_\kappa^{N_W}=V_\kappa^{M_W}\).
But then $j(X)=X\in N=j(V_\kappa^{\M})$, and so $X\in V_\kappa^{\M}$.
\end{proof}

\begin{claim}\label{cl:mutualCover}
If $X\subseteq \ORD^N$ is a set of size less than $\delta$ in $\M'$, then there is
$Y\in \M\cap N$ of size at most $\delta$ in $N'$ such that $X\subseteq Y$.
\end{claim}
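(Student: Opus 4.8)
The plan is to build $Y$ as the union of an increasing chain of length $\delta$ that alternates between covering inside $\M$ (using the $\delta$-cover property of $\M\of\M'$) and covering inside $N$ (using the $\delta$-cover property of $N\of N'$ established just above, which $N_W$ verifies correctly). A single application of either property will not suffice: a cover of $X$ furnished by $\M\of\M'$ need not lie in $N$, and a cover furnished by $N\of N'$ need not lie in $\M$. Interleaving the two operations and passing to the limit produces a set closed under both covering operations, and this is what lets it land in $\M\cap N$.

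First I record a transfer fact used repeatedly. If $S\of j(\kappa)$ has size less than $\delta$ in $\M'$, then $S\in N'$ and $|S|^{N'}<\delta$. Indeed, since $N_W$ is a $\kappa$-model in $\M'$ and $\delta<\kappa$, any enumeration of $S$ of length less than $\kappa$ lying in $\M'$ is an element of $N_W$, so $S\in N_W$; as $\kappa$ is inaccessible in $N_W$ we get $|S|^{N_W}<\kappa$, and as $j(\kappa)$ is regular in $N_W$ we get $\sup S<j(\kappa)$, whence $S\in V_{j(\kappa)}^{N_W}=N'$. Applying the same argument to a bijection witnessing $|S|^{\M'}<\delta$ shows $|S|^{N'}<\delta$. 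In particular $X\in N'$ with $|X|^{N'}<\delta$. Thus both covering steps below are always available: any set of fewer than $\delta$ ordinals below $j(\kappa)$ can be covered by a set in $\M$ of $\M$-size less than $\delta$ and, being in $N'$ of $N'$-size less than $\delta$, can also be covered by a set in $N$ of $N'$-size less than $\delta$.

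Working in $\M'$, I construct a continuous increasing chain $\langle S_\xi:\xi<\delta\rangle$ of subsets of $j(\kappa)$ with $X\of S_0$: at a successor stage I first cover $S_\xi$ by some $\hat Y_\xi\in\M$ and then cover $\hat Y_\xi$ by some $Y_\xi\in N$, setting $S_{\xi+1}=Y_\xi$, and at limits I take unions. By regularity of $\delta$ each $S_\xi$ has size less than $\delta$, so the construction never stalls. Let $Y=\bigcup_{\xi<\delta}S_\xi$; then $X\of Y$ and $|Y|^{N'}\leq\delta$. To see $Y\in N$, note that $\langle Y_\xi:\xi<\delta\rangle$ is a sequence of length $\delta<\kappa$ of elements of $N$ lying in $\M'$, hence an element of the $\kappa$-model $N_W$; since $N=j(V_\kappa^{\M})$ is closed under sequences of length less than $j(\kappa)$ inside $N_W$ by elementarity, this sequence is in $N$, and therefore so is its union $Y$. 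To see $Y\in\M$, I apply the $\delta$-approximation property of $\M\of\M'$: given $x\in\M$ of $\M$-size less than $\delta$, the sets $\hat Y_\xi\cap x$ form an increasing chain of subsets of $x$, which must stabilize at some $\xi_0<\delta$ since $\delta$ is regular and $|x|<\delta$, so $Y\cap x=\hat Y_{\xi_0}\cap x\in\M$. As every $\delta$-approximation of $Y$ lies in $\M$, we conclude $Y\in\M$.

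The main obstacle is precisely the demand that $Y$ lie in both models simultaneously, which no single invocation of a cover property achieves; the length-$\delta$ interleaving overcomes it by rendering $Y$ closed under both covering operations, after which $N$-membership follows from $N_W$ being a $\kappa$-model together with elementarity of $j$, and $\M$-membership follows from the approximation property via the stabilization of the approximations of $Y$.
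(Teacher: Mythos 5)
Your overall strategy --- the interleaved chain of length $\delta$ bouncing between covers in $\M$ and covers in $N$, followed by an approximation-property argument at the top --- is exactly the paper's construction, and your verification that $Y\in\M$ (stabilization of the approximations $Y\cap x=\hat Y_{\xi_0}\cap x$) is correct and matches the paper, as does your preliminary transfer fact (the paper phrases it as: $N'$ is a $\kappa$-model in $\M'$). The genuine gap is in the step $Y\in N$. You assert that ``$N=j(V_\kappa^{\M})$ is closed under sequences of length less than $j(\kappa)$ inside $N_W$ by elementarity.'' This is false in general. Elementarity would require $M_W\models{}$``$V_\kappa^{\M}$ is closed under ${<}\kappa$-sequences,'' but the sequences available in $M_W$ are not only those of $\M$: since $V_\kappa^{\M'}\in M_W$ and $M_W$ is transitive, we have $V_\kappa^{M_W}=V_\kappa^{\M'}$, so any set of rank less than $\kappa$ lying in $\M'\setminus\M$ --- for instance a new bounded subset of $\kappa$, which exists in the motivating case where $\M'$ is a closure-point forcing extension of $\M$, and which the hypotheses of the theorem do nothing to exclude --- is an element of $M_W$ that is a short sequence of elements of $V_\kappa^{\M}$ not belonging to $V_\kappa^{\M}$. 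Hence $M_W$, and by elementarity $N_W$, satisfies the \emph{negation} of your closure statement. Concretely, by Claim~\ref{cl:sameVkappa} such a new bounded subset of $\kappa$ lies in $N'\setminus N$, yet it is a ${<}j(\kappa)$-sequence of elements of $N$; your principle would wrongly place it in $N$. A further sign the principle is too strong: if it held, the entire claim would be trivial, since a single cover of $X$ taken in $\M$ would automatically lie in $N$ and no interleaving would be needed.

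The repair is the symmetric twin of your own argument for $Y\in\M$, and it is what the paper does. You already arranged cofinally many stages $Y_\xi\in N$, and $Y\in N'$ with $|Y|^{N'}\le\delta$. Given $y\in N$ of size less than $\delta$, regularity of $\delta$ yields a stage $\xi_0<\delta$ with $Y\cap y=Y_{\xi_0}\cap y$, which lies in $N$; then the $\delta$-approximation property of $N\of N'$ (which holds in $N_W$ by elementarity, and about which $N_W$ is correct) gives $Y\in N$. With that single replacement your proof is complete and coincides with the paper's.
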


\begin{proof}
This construction mimics Lemma~3.2 in \cite{hamkins:coverandapproximations}. Let $X_0=X$, and
observe that $X_0\in N'$ since $N'$ is a $\kappa$-model in $\M'$. So, by the $\delta$-cover
property of $N\subseteq N'$, there is $X_1\subseteq\ORD^N$ of size less than $\delta$ in $N$
such that $X_0\subseteq X_1$. Then, by the $\delta$-cover property of $\M\subseteq \M'$,
there is $X_2\subseteq\ORD^N$ of size less than $\delta$ in $\M$ such that $X_1\subseteq X_2$
(this uses that $\M$ and $\M'$ have the same ordinals). The set $X_2$ is in the
$\kappa$-model $N'$, and so, again, there is $X_3$ of size less than $\delta$ in $N$ such
that $X_2\subseteq X_3$. Continue bouncing between $N$ and $\M$ in this way. To get through 	
limit stages, observe that if $\gamma<\delta$ and $\la X_\xi\mid\xi<\gamma\ra$ is a sequence
of sets of size less than $\delta$ in $\M'$, then $X_\gamma=\Union_{\xi<\gamma}X_\gamma$ has
size less than $\delta$ in $\M'$ by the regularity of $\delta$. Thus, after $\delta$-many
steps, we end up with an increasing sequence $\la X_\xi\mid\xi<\delta\ra$ such that cofinally
many elements of it are in $N$ and cofinally many are in $\M$. Let
$Y=\Union_{\xi<\delta}X_\xi$. By closure, $Y\in N'$ and $Y$ has size at most $\delta$ there.
To see that \(Y\in N\) we use the \(\delta\)-approximation property of \(N\subseteq N'\).
Specifically, let \(y\in N\) have size less than \(\delta\). Then there is some
\(\xi<\delta\) such that \(Y\cap y=X_\xi\cap y\) and we may furthermore choose \(\xi\)
so that \(X_\xi\in N\). So then clearly \(Y\cap y\in N\) and we obtain \(Y\in N\)
by the \(\delta\)-approximation property. A similar argument, using the
\(\delta\)-approximation property of \(\M\subseteq\M'\), shows that also \(Y\in\M'\).
\end{proof}

\begin{claim}\label{cl:sameSmallSubsets}
$\M$ and $N$ have the same subsets of $\ORD^N$ of size less than $\delta$ in $\M'$.
\end{claim}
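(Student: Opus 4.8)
The plan is to prove both inclusions of the claimed equality, and to exploit the symmetry of the hypotheses to reduce to a single direction. The pairs $N\subseteq N'$ and $\M\subseteq\M'$ each satisfy the $\delta$-cover and $\delta$-approximation properties (the former holds in $N_W$ and is correct, the latter by assumption), the mutual covering of Claim~\ref{cl:mutualCover} is symmetric in $\M$ and $N$, and $N',\M$ are both submodels of $\M'$. So it suffices to show that if $X\subseteq\ORD^N$ with $|X|^{\M'}<\delta$ and $X\in\M$, then $X\in N$; the reverse is verbatim with the roles of $\M$ and $N$ exchanged. I would first record that $X\in N'$: since $N'$ is a $\kappa$-model in $\M'$ and $|X|^{\M'}<\delta<\kappa$, the set $X$ is a ${<}\kappa$-sequence of elements of $N'$ in $\M'$, hence lies in $N'$. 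The goal is then to establish $X\in N$ by verifying the hypothesis of the $\delta$-approximation property of $N\subseteq N'$, namely that $X\cap a\in N$ for every $a\in N$ with $|a|^N<\delta$.

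To verify this I would, given such an $a$, invoke Claim~\ref{cl:mutualCover} to cover $a$ by some $b\in\M\cap N$ with $a\subseteq b$ and $|b|^{N'}\leq\delta$, so that $X\cap a=(X\cap b)\cap a$. Since $X,b\in\M$ we get $X\cap b\in\M$, and since $a\in N$ it would remain only to transfer $X\cap b$ into $N$, after which $X\cap a=(X\cap b)\cap a\in N$. The base of this transfer comes from Claim~\ref{cl:sameVkappa}: because $V_\kappa^\M=V_\kappa^N$ and $\kappa$ is inaccessible in $\M'$ (so $\operatorname{cf}^{\M'}\kappa=\kappa>\delta$), every small subset of $\kappa$ is bounded below $\kappa$ and therefore lies in $V_\kappa^\M=V_\kappa^N$; in particular $\mathcal P(\delta)^\M=\mathcal P(\delta)^N$ and, more generally, $\M$ and $N$ already agree on all small subsets of $\ORD^N$ all of whose elements lie below $\kappa$. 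Thus the transfer is automatic for sets supported below $\kappa$, and the task reduces to small subsets of $\ORD^N=j(\kappa)$ containing ordinals in the interval $[\kappa,j(\kappa))$.

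The main obstacle, and the genuinely delicate point, is precisely this interval: a test set $a\in N$ of size ${<}\delta$ need not belong to $\M$ (its elements may be unbounded in some $\lambda<j(\kappa)$ with $\operatorname{cf}^{\M'}\lambda<\delta$), so one cannot simply compute $X\cap a$ inside $\M$, and the covering step only produces covers of size $\leq\delta$ rather than ${<}\delta$, which blocks a naive single application of either approximation property. My plan is to resolve this exactly as in Lemma~3.3 of~\cite{hamkins:coverandapproximations}, by interleaving both the $\delta$-cover and $\delta$-approximation properties of the two pairs: one alternately covers the relevant test data in $\M$ and in $N$, computing each intersection in whichever of the two models currently owns the cover set, and transfers the resulting small codes across using the agreement $\mathcal P(\delta)^\M=\mathcal P(\delta)^N$ (equivalently, the agreement of $V_\kappa$) as the bottom of the recursion, organized as an induction on the ordinal heights of the sets involved. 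Once the approximation hypothesis is confirmed in this way, the $\delta$-approximation property of $N\subseteq N'$ yields $X\in N$, and the symmetric argument yields the reverse inclusion, establishing that $\M$ and $N$ have the same subsets of $\ORD^N$ of size less than $\delta$ in $\M'$.
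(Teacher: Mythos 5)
Your scaffolding---reduce to one inclusion by symmetry, use Claim~\ref{cl:mutualCover} for a mutual cover, and use Claim~\ref{cl:sameVkappa} as the bottom of a transfer---is the right raw material, but there is a genuine gap exactly where you flag ``the genuinely delicate point.'' The step ``transfer $X\cap b$ into $N$'' is the original problem all over again: $X\cap b$ is a small set of ordinals in $\M$, possibly with elements in $[\kappa,j(\kappa))$, and nothing you have said gets such a set into $N$. Your proposed fix---interleaving the cover and approximation properties of the two pairs, ``organized as an induction on the ordinal heights of the sets involved''---is not a concrete argument: the approximation properties only accept test sets of size less than $\delta$, your covers have size at most $\delta$, and no amount of alternating covering and approximating by itself converts a subset of $j(\kappa)$ into something ``supported below $\kappa$.'' The missing device, which is the entire point of Lemma~3.3 of \cite{hamkins:coverandapproximations} and of the paper's proof, is this: since the mutual cover $Y\in\M\cap N$ is a \emph{set of ordinals}, its increasing enumeration $Y=\{y_\alpha\mid\alpha<\gamma\}$ by its order type $\gamma$ (where $\gamma<\kappa$, as $|Y|^{N'}\leq\delta$) is absolute, i.e.\ computable inside each of $\M$ and $N$ separately. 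Hence any $Z\subseteq Y$ is interdefinable over either model with $\overline Z=\{\alpha<\gamma\mid y_\alpha\in Z\}$, a subset of $\gamma$ and thus a bounded subset of $\kappa$, which lies in $\M$ if and only if it lies in $N$ by Claim~\ref{cl:sameVkappa}. This one coding step handles precisely the ordinals in $[\kappa,j(\kappa))$ that you correctly identified as the obstacle; no induction and no approximation property are needed.

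Once you have this device, your detour through the $\delta$-approximation property of $N\subseteq N'$ collapses: apply Claim~\ref{cl:mutualCover} directly to $X$ (rather than to the test sets $a$) to get $Y\in\M\cap N$ with $X\subseteq Y$, set $\overline X=\{\alpha<\gamma\mid y_\alpha\in X\}$, and observe that $X\in\M\iff\overline X\in\M\iff\overline X\in N\iff X\in N$. That biconditional is the paper's whole proof, and it delivers both inclusions at once. (Your worry that covers of size $\leq\delta$ rather than ${<}\delta$ ``block'' the approximation property is a symptom of the unnecessary detour: in the correct argument the cover's cardinality is irrelevant---all that matters is that its order type stays below $\kappa$.)
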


\begin{proof}
This argument mimics Lemma~3.3 in \cite{hamkins:coverandapproximations}. Suppose that
$X\subseteq \ORD^N$ has size less than $\delta$ in $\M'$. By Claim~\ref{cl:mutualCover},
there is a set $Y$ of size at most $\delta$ in $N'$ such that $X\subseteq Y$ and
$Y\in N\cap \M$. Let $Y=\{y_\alpha\mid\alpha<\gamma\}$ be the enumeration of $Y$ arising from
its order-type and note that $\gamma<{\delta^+}^{\M'}<\kappa$.
Since the order-type of $Y$ is absolute, the
enumeration is in both $N$ and $\M$. Let $\overline X=\{\alpha<\gamma\mid y_\alpha\in X\}$,
which is a subset of $\gamma$. Now observe that $X$ is in $N$ or $\M$ if and only if
$\overline X$ is there, and $\overline X$ is in $\M$ if and only if it is in $N$, by
Claim~\ref{cl:sameVkappa}, since it is a subset of $\gamma$.
\end{proof}

\begin{claim}\label{cl:expectedIntersection}
$\M\cap N'=N$ and $N\in \M$.
\end{claim}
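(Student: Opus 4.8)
The plan is to prove the two assertions in turn, first establishing the set equality $\M\cap N'=N$ and then upgrading it to membership $N\in\M$. Throughout I would freely reduce questions about arbitrary elements of $N$ or $N'$ to questions about subsets of ordinals below $j(\kappa)=\ORD^N$, coding each set by a well-order of its transitive closure and hence by a subset of some $\gamma<j(\kappa)$ internal to the relevant model; this is harmless since $N$, $N'$, $\M$ and $\M'$ all satisfy enough of $\ZFC$. Before anything else I would record that $N\subseteq N'$: since $W$ is $V_\kappa^\M$-good we have $V_\kappa^\M,V_\kappa^{\M'}\in M_W$ with $V_\kappa^\M\subseteq V_\kappa^{\M'}$, so applying the elementary map $j$ gives $N=j(V_\kappa^\M)\subseteq j(V_\kappa^{\M'})=N'$.

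For the inclusion $\M\cap N'\subseteq N$ I would take $X\in\M\cap N'$, coded as $X\subseteq\ORD^N$, and apply the $\delta$-approximation property of the pair $N\subseteq N'$ (valid by elementarity, as noted just before the claim). The hypothesis to check is that $X\cap y\in N$ for every $y\in N$ of size less than $\delta$ in $N$. Such a $y$ is a small subset of $\ORD^N$, so $y\in\M$ by Claim~\ref{cl:sameSmallSubsets}; hence $X\cap y\in\M$, and, being again a small subset of $\ORD^N$, it lies in $N$ by Claim~\ref{cl:sameSmallSubsets}. Thus $X\in N$. For the reverse inclusion $N\subseteq\M\cap N'$ I already have $N\subseteq N'$, so it remains to see $N\subseteq\M$: taking $z\in N$ coded by some $\bar z\subseteq\gamma<j(\kappa)$ with $\bar z\in N$, I would run the symmetric argument with the $\delta$-approximation property of $\M\subseteq\M'$, verifying that $\bar z\cap y\in\M$ for small $y\in\M$ by first passing $y$ into $N$ (Claim~\ref{cl:sameSmallSubsets}) and then $\bar z\cap y$ back into $\M$. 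This yields $\bar z\in\M$ and hence $z\in\M$. Combining the two inclusions gives $\M\cap N'=N$, and in particular $N\subseteq\M$.

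The delicate part is the final assertion $N\in\M$. The natural move is to apply the $\delta$-approximation property of $\M\subseteq\M'$ directly to the set $N$ itself: we have $N\in\M'$ (as $N\in N_W$) and $N\subseteq\M$ from the previous step, so it suffices to verify that $N\cap y\in\M$ for every $y\in\M$ of size less than $\delta$ in $\M$. Here lies the main obstacle: the approximation property gives no control over sets of size less than $\delta$, so one cannot simply assert that these small pieces descend to $\M$, nor can one code $N$ by a large set of ordinals whose small approximations remain in $N$. The key observation that breaks the impasse is that $N$, being the ultrapower of the $\kappa$-model $V_\kappa^\M$ by $W$, is itself a $\kappa$-model in $\M'$ and hence closed under $\ltkappa$-sequences there. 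Since $y\subseteq\M$ gives $N\cap y=N'\cap y$, this intersection is a subset of $N$ of size less than $\delta\leq\kappa$ lying in $\M'$, so $\ltkappa$-closure of $N$ forces $N\cap y\in N$; as $N\subseteq\M$, we conclude $N\cap y\in\M$. With the approximation hypothesis verified, the $\delta$-approximation property of $\M\subseteq\M'$ delivers $N\in\M$, completing the claim.
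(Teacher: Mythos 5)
Your first inclusion, $N\of\M$, is correct and is essentially the paper's argument: code an element of $N$ by a set of ordinals chosen \emph{in $N$}, bounce small approximations back and forth via Claim~\ref{cl:sameSmallSubsets}, and apply the $\delta$-approximation property of $\M\of\M'$. The other two parts, however, each contain a genuine gap.

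The serious one is in your proof of $N\in\M$. Your key step asserts that $N=j(V_\kappa^{\M})$ is a $\kappa$-model in $\M'$, hence closed under $\ltkappa$-sequences there. That is false in general; it is $N'=V_{j(\kappa)}^{N_W}$, an initial segment of the $\kappa$-model $N_W$, that has this closure, not $N$. Note first that $V_\kappa^{\M}$ is a $\kappa$-model in $\M$ but not in $\M'$, so the general fact about ultrapowers of $\kappa$-models gives you nothing inside $\M'$. Concretely, suppose $\M'$ is a Cohen real extension of $\M$ (such a pair satisfies all the hypotheses with $\delta=\omega_1$). The characteristic function of the Cohen real is an $\omega$-sequence of elements of $N$ lying in $\M'$; if it were in $N$, the Cohen real itself would lie in $N\of\M$ (using the inclusion $N\of\M$ you just proved), a contradiction. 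More generally, nothing in the hypotheses prevents $\M'$ from having new $\omega$-sequences of ordinals, and any such sequence below $j(\kappa)$ witnesses the failure of even $\omega$-closure of $N$ in $\M'$. The paper's proof routes through $N'$ instead: for small $x\in\M$, the set $N\cap x$ is a subset of $N'$ of size less than $\delta$ in $\M'$, hence $N\cap x\in N'$ by the closure of $N'$; then $N\cap x\in V_\beta^{N'}$ for some $\beta$, so $N\cap x\of V_\beta^{N'}\cap N=V_\beta^N$, and therefore $N\cap x=V_\beta^N\cap x\in\M$, because $V_\beta^N\in N\of\M$ and $x\in\M$. Your desired conclusion is true, but only via this detour; as written, your justification does not stand.

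The second gap is the coding reduction in $\M\cap N'\of N$. Your argument is fine for sets of ordinals, but for an arbitrary $X\in\M\cap N'$ you need a single code $\bar X\of\ORD^N$ that lies simultaneously in $\M$ (to verify $\bar X\cap y\in\M$) and in $N'$ (to be permitted to apply the $\delta$-approximation property of $N\of N'$ to it), and that decodes to $X$ inside $N$. A well-order of the transitive closure of $X$ chosen in $\M$ need not belong to $N'$, and one chosen in $N'$ need not belong to $\M$ (in the Cohen example, $N'$ contains the Cohen real, so it has codes that $\M$ lacks); so the blanket ``coding is harmless'' preamble fails exactly at this point. This is the hole the paper's $\in$-induction fills: assuming inductively that every element of $X$ is already in $N$, one gets $X\of V_\beta^N$ for some $\beta$; enumerating $V_\beta^N$ \emph{inside $N$} produces an enumeration lying in $N\of\M\cap N'$, so the induced set of ordinals $\{\alpha\mid y_\alpha\in X\}$ is in both $\M$ and $N'$, and your set-of-ordinals argument then applies to it.
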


\begin{proof}
This argument mimics Lemma~3.4 in \cite{hamkins:coverandapproximations}. First, we show
that $N\subseteq \M$. It suffices to verify that all sets of ordinals in $N$ are elements
of $\M$. Suppose that $X\subseteq \ORD^N$ is in $N$. Fix a set $x\subseteq\ORD^N$ of size
less than $\delta$ in $\M$. By Claim~\ref{cl:sameSmallSubsets} we have $x\in N$ and so
$X\cap x\in N$ as well. But $X\cap x$ is a set of ordinals in $N$ of size less than
$\delta$ in $\M'$, and therefore by Claim~\ref{cl:sameSmallSubsets} again, $X\cap x\in \M$.
So $X\in \M$ by the $\delta$-approximation property of $\M\subseteq \M'$.

Next, we verify that
$\M\cap N'\subseteq N$. Initially, we show that every set of ordinals in $\M\cap N'$ is in
$N$. So suppose that $X$ is a set of ordinals in $\M\cap N'$. Let $x$ be a set of ordinals of
size less than $\delta$ in $N$. Then $x\in \M$, by Claim~\ref{cl:sameSmallSubsets}, and so
$x\cap X$ in $\M$. By Claim~\ref{cl:sameSmallSubsets} again, $x\cap X\in N$. So, by the
$\delta$-approximation property of $N\subseteq N'$, we obtain $X\in N$.
Now suppose that $X$ is any set
in $\M\cap N'$. By $\in$-induction, suppose that every element of $X$ is in $N$. Since
$X\in N'$ and $N'$ is a model of $\ZFC$, there must be some ordinal $\beta$ in $N'$ such that
$X\in V_\beta^{N'}$ and thus $X\subseteq V_\beta^N=Y$. Enumerate
$Y=\{y_\alpha\mid \alpha<\gamma\}$ in $N$, and note that since $N\subseteq \M$, the
enumeration exists in $\M$ as well. Let $\overline Y=\{\alpha<\gamma\mid y_\alpha\in X\}$.
The set $\overline Y$ is in $\M$ and also in $N'$. So by what we already argued for sets of
ordinals, $\overline Y$ is in $N$, and hence so is $X$. This completes the argument that
$N=\M\cap N'$.

We will use the $\delta$-approximation property of $\M\subseteq \M'$ to argue that $N\in \M$.
Fix a set $x$ of size less than $\delta$ in $\M$. The intersection $N\cap x$ is in the 
$\kappa$-model $N'$, therefore there is some \(\beta\) such that \(N\cap x\in V_\beta^{N'}\).
It now follows, since obviously \(N\cap x\subseteq N\), that \(N\cap x\subseteq V_\beta^N\). Hence $N\cap x=V_\beta^N\cap x$ is in $\M$ since both $x$ and $V_\beta^N$ are there.
%
\end{proof}

Let $\overline U=N\cap W$.

\begin{claim}
$\overline U\in \M$.
\end{claim}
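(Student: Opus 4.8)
The goal is to show that $\overline U=N\cap W\in\M$, where $W$ is the strong Ramsey measure in $\M'$ and $N=j(V_\kappa^\M)=\M\cap N'$. Since $\overline U$ is a subset of $N$ (indeed of $\pset(\kappa)^N$) living in $\M'$, and since we have just established in Claim~\ref{cl:expectedIntersection} that $\M\subseteq\M'$ enjoys the $\delta$-approximation property, the natural strategy is to verify the approximation hypothesis: it suffices to show that $\overline U\cap x\in\M$ for every set $x$ of size less than $\delta$ in $\M$, and then conclude $\overline U\in\M$ by approximation. (We should also note $\overline U\subseteq N\subseteq\M$ by Claim~\ref{cl:expectedIntersection}, so $\overline U$ is genuinely a subset of $\M$, as the approximation property requires.)

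The plan is therefore to fix an $x\in\M$ of size less than $\delta$ in $\M$ and analyze $\overline U\cap x=N\cap W\cap x$. First I would reduce to the case where $x\subseteq\pset(\kappa)^N$, since elements of $\overline U$ are subsets of $\kappa$ in $N$; intersecting with $N$ and with $W$ only keeps such sets. The key point is that $W$ is weakly amenable to $M_W$, and $N=V_\kappa^{N_W}$-style considerations (via Claim~\ref{cl:sameVkappa}) tie the subsets of $\kappa$ in $N$ to those in $M_W$. The intent is to leverage weak amenability: for any collection $x$ of size at most $\kappa$ (in particular size less than $\delta<\kappa$) consisting of subsets of $\kappa$ that lie in $M_W$, the intersection $x\cap W$ is an element of $M_W$, hence of $N_W$. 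I would then argue this witnesses that $W\cap x$ is captured inside the relevant inner model and, combined with $N=\M\cap N'$, descends into $\M$.

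The main obstacle, which I would address carefully, is that $x$ is a set in $\M$ but its elements being subsets of $\kappa$ in $N$ does not immediately place $x$ itself inside $M_W$ or $N_W$ in a usable way; weak amenability applies to sets of size $\leq\kappa$ that are \emph{elements} of the model $M_W$, so I need $x$ (or a suitable superset of it of size less than $\delta$) to be in $M_W$. Here I would invoke the covering and the earlier structural claims: using Claim~\ref{cl:mutualCover} and Claim~\ref{cl:sameSmallSubsets}, small subsets of $\ORD^N$ in $\M'$ lie in $N$, and $N$-subsets of $\kappa$ coincide with $M_W$-subsets of $\kappa$ by Claim~\ref{cl:sameVkappa} together with $\pset(\kappa)^{M_W}=\pset(\kappa)^{N_W}$ from weak amenability. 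So $x\cap\pset(\kappa)^N$ can be seen to be a small subset of $M_W$, whence weak amenability gives $W\cap x\in M_W\subseteq N_W$, and thus $\overline U\cap x=N\cap(W\cap x)\in N\subseteq\M$.

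Finally, having shown $\overline U\cap x\in\M$ for all small $x\in\M$, the $\delta$-approximation property of $\M\subseteq\M'$ (established in Claim~\ref{cl:expectedIntersection}) yields $\overline U\in\M$, completing the claim. The crux is the interplay between weak amenability of $W$ and the cover/approximation machinery, so I would isolate the weak-amenability step as the heart of the argument and treat the approximation invocation as routine.
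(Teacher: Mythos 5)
Your overall frame---verifying $\overline U\cap x\in\M$ for every $x\in\M$ of size less than $\delta$ and then invoking the $\delta$-approximation property of $\M\subseteq\M'$ (which, incidentally, is a hypothesis of the theorem, not something proved in Claim~\ref{cl:expectedIntersection})---matches the paper, and your preliminary maneuvers are fine: one can indeed get $x\in N'$ by the $\ltkappa$-closure of $N'$ in $\M'$, hence $x\in\M\cap N'=N$, and then $x\in\her{\kappa}^{N_W}=M_W$, so weak amenability does give $W\cap x\in M_W$. The genuine gap is the final inference ``$\overline U\cap x=N\cap(W\cap x)\in N\subseteq\M$.'' Since $x\subseteq\pset(\kappa)^N$, we have $N\cap(W\cap x)=W\cap x$, so what you need is precisely $W\cap x\in N$. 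But weak amenability only places $W\cap x$ in $M_W\subseteq N'$, and there is no inclusion of $M_W$ (or $N_W$, or $N'$) into $\M$ or $N$: these models all live on the $\M'$ side, and a $\kappa$-sized fragment of $W$ has no a priori reason to belong to $\M$. The only tool for descending from $N'$ to $N$ is $N=\M\cap N'$, and applying it requires already knowing $W\cap x\in\M$---which is exactly the statement being proved. So the argument is circular at its crucial step. (Tellingly, the \emph{next} claim in the paper runs your kind of combination legitimately: weak amenability puts $X\cap\overline U$ into $N'$, and membership in $\M$ comes from the already-established fact $\overline U\in\M$. That is precisely why the present claim must be proved first, and by a different method.)

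The missing idea is the seed argument of Theorem~10 of \cite{hamkins:coverandapproximations}. Besides restricting to $x\subseteq\pset(\kappa)^N$, close $x$ under complements in $\kappa$. Since $M_W$ is a $\kappa$-model in $\M'$, the $M_W$-ultrafilter $W$ is $\kappa$-intersecting in $\M'$, so the fewer than $\delta<\kappa$ many sets in $W\cap x$ have a common element $\beta<\kappa$. Then for $B\in x$: if $B\in W$ then $\beta\in B$ by choice of $\beta$, and if $B\notin W$ then $B^c\in x\cap W$, so $\beta\in B^c$, i.e.\ $\beta\notin B$. Hence
\[
\overline U\cap x=W\cap x=\{B\in x\mid\beta\in B\},
\]
which is definable from $x\in\M$ and the ordinal $\beta$ by an absolute condition, and is therefore in $\M$. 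This single ordinal $\beta$ does the work you were asking weak amenability to do, and note that it uses the $\kappa$-model hypothesis on $M_W$ (to get the $\kappa$-intersecting property)---which is exactly where the strongly Ramsey assumption enters this argument.
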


\begin{proof}
We will use the $\delta$-approximation property of $\M\subseteq \M'$. This construction
mimics Theorem~10 in \cite{hamkins:coverandapproximations}. Suppose that $x$ is a set of size
less than $\delta$ in $\M$. We can assume that $x\subseteq P(\kappa)^N$ and also that
whenever some $B\subseteq\kappa$ is in $x$, then so is the complement of $B$ in $\kappa$.
Since $W$ is an $M_W$-ultrafilter and $M_W$ is a $\kappa$-model in $\M'$, it follows that $W$
is $\kappa$-intersecting in $\M'$. So working in $\M'$, we consider the intersection of all
$B\in W\cap x$, which is non-empty, and hence must contain some element $\beta$. We will
argue that, for $B\in x$, we have $\beta\in B$ precisely when $B\in W$. By definition of
$\beta$, if $B\in W\cap x$, then $\beta\in B$. If $B\notin W$, then its complement $B^c$
is in $x\cap W$, and so $\beta\in B^c$, which means that $\beta\notin B$.
Thus, $\overline U\cap x=W\cap x=\{B\in x\mid \beta\in B\}$, which is clearly in $\M$.
\end{proof}

\begin{claim}
$N$ is closed under $\ltkappa$-sequences in $\M$, we have $A\in N$, and $\overline U$ is a
weakly amenable $\omega_1$-intersecting $N$-ultrafilter.
\end{claim}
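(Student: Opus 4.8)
The plan is to verify the three assertions in turn, in each case exploiting the identity $N=\M\cap N'$ together with $N\in\M$ (both from Claim~\ref{cl:expectedIntersection}), the fact that $N'$ is a $\kappa$-model in $\M'$, and the inclusion $M_W\subseteq N'$. The latter holds because $M_W=\her{\kappa}^{N_W}\subseteq V_{j(\kappa)}^{N_W}=N'$, using that $j(\kappa)$ is inaccessible in $N_W$, so every set of hereditary size at most $\kappa$ has rank below $j(\kappa)$.

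For closure under $\ltkappa$-sequences, I would take a sequence $s=\la x_\xi\mid\xi<\gamma\ra\in\M$ with $\gamma<\kappa$ and each $x_\xi\in N$. Since $s\in\M\subseteq\M'$ and each $x_\xi\in N\subseteq N'$, the closure of $N'$ under $\ltkappa$-sequences in $\M'$ yields $s\in N'$, whence $s\in\M\cap N'=N$. For $A\in N$, note that $A\in\M$ by choice, while $A\in M_W\subseteq N'$ because $W$ is $A$-good; thus $A\in\M\cap N'=N$.

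The bulk of the work is showing that $\overline U=N\cap W$ is a weakly amenable, $\omega_1$-intersecting $N$-ultrafilter. That it is an $N$-ultrafilter follows once I observe that $\pset(\kappa)^N\subseteq\pset(\kappa)^{M_W}$: indeed $N\subseteq N_W$, and $M_W=\her{\kappa}^{N_W}$ contains every subset of $\kappa$ lying in $N_W$. Hence $W$ decides each $B\in\pset(\kappa)^N$, making $\overline U$ an ultrafilter on $\pset(\kappa)^N$, and normality for sequences in $N$ transfers from $W$: a regressive $f\in N$ has hereditary size at most $\kappa$, so $f\in M_W$, and is constant with some value $\beta$ on a set $C\in W$; since $C=\{\xi<\kappa\mid f(\xi)=\beta\}$ is definable in $N$ from $f$ and $\beta$, we get $C\in N\cap W=\overline U$. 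The $\omega_1$-intersecting property is immediate: $\overline U\subseteq W$, which is $\omega_1$-intersecting in $\M'$, and nonemptiness of a countable intersection is absolute between $\M$ and $\M'$.

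I expect weak amenability to be the main obstacle, since it is the step that simultaneously draws on all of the surrounding machinery. Given $X\in N$ of size at most $\kappa$ in $N$, which I may assume consists of subsets of $\kappa$, I have $X\cap\overline U=X\cap W$. This set lies in $\M$ because $X\in N\subseteq\M$ and $\overline U\in\M$ by the preceding claim, and it lies in $N'$ because $X\in M_W$ and weak amenability of $W$ gives $X\cap W\in M_W\subseteq N'$; therefore $X\cap\overline U\in\M\cap N'=N$, as required. Assembling the three verifications shows that $N$ is a $\kappa$-model of $\M$ containing $A$ (it is a $\ZFC$-model by elementarity of $j$ applied to $V_\kappa^\M$) carrying the weakly amenable $\omega_1$-intersecting $N$-ultrafilter $\overline U$, which is exactly what witnesses that $\kappa$ remains strongly Ramsey in $\M$.
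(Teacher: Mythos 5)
Your proof is correct and takes essentially the same approach as the paper: each of the three assertions is reduced to membership in both $\M$ and $N'$ followed by an appeal to $N=\M\cap N'$ from Claim~\ref{cl:expectedIntersection}, with the $\ltkappa$-closure of $N'$ in $\M'$ handling sequences, $\overline U\in\M$ (the preceding claim) together with the weak amenability of $W$ handling amenability, and absoluteness handling the $\omega_1$-intersecting property, so your write-up just fills in details the paper declares ``clear.'' One correction to your closing remark, though: the claim does not yet witness that $\kappa$ is strongly Ramsey in $\M$, because $N$ need not be a $\kappa$-model \emph{in $\M$} --- it is closed under $\ltkappa$-sequences there, but the bijection of $N$ with $\kappa$ lives in $\M'$ and may be missing from $\M$, which is exactly why the paper follows this claim by cutting down to an elementary substructure $M\prec\her{\kappa}^N$ of size $\kappa$ with $U=\overline U\cap M$ before drawing that conclusion.
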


\begin{proof}
Since $A\in N'=V_{j(\kappa)}^{N_W}$ and also $A\in \M$, we get \(A\in N\) by
Claim~\ref{cl:expectedIntersection}. If $\vec x=\la x_\xi\mid\xi<\gamma\ra$, for
some $\gamma<\kappa$, is a sequence of elements of $N$ in $\M$, then $\vec x$ is in both $\M$
and $N'$, and so $\vec x\in N$, again by Claim~\ref{cl:expectedIntersection}.

It is clear that $\overline U$ is an $\omega_1$-intersecting $N$-ultrafilter. It remains to
show that $\overline U$ is weakly amenable to $N$. Consider $X\cap \overline U$, where $X$ is
a set of size $\kappa$ in $N$. The set $X\cap \overline U$ is in $\M$ and also in $N'$ by the
weak amenability of $U$. Hence $X\in N$ by Claim~\ref{cl:expectedIntersection}.
\end{proof}

We return to the proof of the theorem. We now have, in \(\M\), a model \(N\), closed under
\(\ltkappa\)-sequences, with \(A\in N\) and a weakly amenable \(\omega_1\)-intersecting
\(N\)-ultrafilter. The only obstacle to seeing that \(\kappa\) is strongly Ramsey in
\(\M\) is that \(N\) might be too large in cardinality.
But this is easily fixed by building an elementary substructure $M$ of $\her{\kappa}^N$
containing $A$ in $\kappa$-many steps so that $M$ is a $\kappa$-model and
$U=\overline U\cap M$ is a weakly amenable $M$-ultrafilter.

To prove the same result for super Ramsey cardinals, we start with
$M_W\prec\her{\kappa}^{\M'}$. We will be done if we can argue that
$\her{\kappa}^N\prec \her{\kappa}^{\M}$. First, observe that we can use the
$\delta$-approximation property of $\M\subseteq \M'$ to define $\pset(\kappa)^{\M}$ in
$\her{\kappa}^{\M'}$, using the parameter $a=V_\kappa^{\M}$, as the collection of all
$X\subseteq\kappa$ such that for all $x\in a$, $x\cap X\in a$. Let's see that the same
formula defines $\pset(\kappa)^N$ in $\her{\kappa}^{N'}$. Recall that $a=V_\kappa^N\in N$ by
Claim~\ref{cl:VkappaCoverApprox}. If $X\subseteq\kappa$ is in $N$, then $x\cap X\in a$ for
every $x\in a$, and if $X\subseteq\kappa$ is in $N'$ and $x\cap X\in a$ for all $x\in a$, then $X\in N$ by the $\delta$-approximation property of $N\subseteq N'$.
Thus, using
the usual Mostowski coding, there is a formula $\psi(x,a)$ which defines $\her{\kappa}^{\M}$
in $\her{\kappa}^{\M'}$ and the same $\psi(x,a)$ also defines $\her{\kappa}^N$ in
$\her{\kappa}^{N'}$. Now suppose that
$\her{\kappa}^{\M}\models\exists x\,\varphi(x,b)$ for some $b\in \her{\kappa}^N$. So
$\her{\kappa}^{\M'}$ satisfies that $\exists x\,\varphi(x,b)$ holds in the collection defined
by $\psi(x,a)$. Since $\her{\kappa}^{N'}=M_W\prec\her{\kappa}^{\M'}$, it satisfies the same
statement, which gives that $\her{\kappa}^N\models\exists x\,\varphi(x,b)$.
\end{proof}

Next, we show that extensions with cover and approximation properties cannot increase the
rank of a strongly Ramsey or super Ramsey cardinal.

\begin{theorem}\label{th:rankDecreasesStrongRamsey}
Suppose that $\M$ and $\M'$ are practical for \(\kappa\) and that $\M\subseteq \M'$
has the $\delta$-cover and $\delta$-approximation properties for some regular cardinal
$\delta$ of $\M'$ and the same ordinals. If $\kappa>\delta$, then
$\oSR(\kappa)^{\M'}\leq \oSR(\kappa)^\M$. The same result holds for super Ramsey cardinals.
\end{theorem}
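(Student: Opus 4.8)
The plan is to prove the statement by induction on $\alpha$, establishing for every pair $\M\subseteq\M'$ as in the hypothesis that $\oSR(\kappa)^{\M'}\geq\alpha$ implies $\oSR(\kappa)^{\M}\geq\alpha$; the theorem follows by taking $\alpha=\oSR(\kappa)^{\M'}$. Throughout I would use the inductive description of the rank from Corollary~\ref{cor:approxMitchellRank}, which lets me replace $\oSR(\kappa)\geq\alpha$ by the assertion that for every $A\subseteq\kappa$ and every $\beta<\alpha$ there is an $A$-good strong Ramsey measure $W$ with $N_W\models\oSR(\kappa)\geq\beta$. This makes the induction uniform in $\alpha$ (limits and successors are handled together), and its base, the instance $\beta=0$, is exactly Theorem~\ref{th:noNewStrongOrSuperRamsey}.

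For the inductive step, fix $A\subseteq\kappa$ in $\M$ and $\beta<\alpha$, and fix a well-order $E$ of $\kappa$ of order type $\beta$. Working in $\M'$ via Corollary~\ref{cor:approxMitchellRank}, I would choose an $\{A,V_\kappa^{\M},E\}$-good strong Ramsey measure $W$ with ultrapower $j\colon M_W\to N_W$ satisfying $N_W\models\oSR(\kappa)\geq\beta$. I then replay the construction from the proof of Theorem~\ref{th:noNewStrongOrSuperRamsey}, producing in $\M$ the model $N=\M\cap N'$, where $N'=V_{j(\kappa)}^{N_W}=j(V_\kappa^{\M'})$, together with the weakly amenable $\omega_1$-intersecting $N$-ultrafilter $\overline U=N\cap W\in\M$, recording (as in Claim~\ref{cl:VkappaCoverApprox} transported through $j$) that $N\subseteq N'$ again has the $\delta$-cover and $\delta$-approximation properties and the same ordinals, with both models practical for $\kappa$.

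The heart of the argument is to transfer the inequality $N_W\models\oSR(\kappa)\geq\beta$ to the ultrapower of $N$ by $\overline U$. Since the M-rank is computed inside $\her{\kappa}$ and $H_{\kappa^{++}}^{N_W}\subseteq N'=V_{j(\kappa)}^{N_W}$, the rank statement reflects to give $N'\models\oSR(\kappa)\geq\beta$, and hence $\tilde N':=\Ult(N',W)\models\oSR(\kappa)\geq\beta$ as well, using that $N'=M_W^*$ is the $\ZFC$-model coded by $W$ and $N'\prec\tilde N'$. I would then show that $\tilde N:=\Ult(N,\overline U)$, formed in $\M$, satisfies $\tilde N=\M\cap\tilde N'$ and that the pair $\tilde N\subseteq\tilde N'$ again enjoys the $\delta$-cover and $\delta$-approximation properties and the same ordinals, both practical for $\kappa$. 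This is the assertion that the covering/approximation analysis of Theorem~\ref{th:noNewStrongOrSuperRamsey} is preserved when one restricts $W$ to $N$ and passes to ultrapowers, and I expect this preservation statement to be the main obstacle: the factor embedding $\tilde N\to\tilde N'$ need not be elementary, so the identity $\tilde N=\M\cap\tilde N'$ must be re-established from scratch by replaying Claims~\ref{cl:mutualCover}--\ref{cl:expectedIntersection} one level up, now using the weak amenability of $W$ to control $\pset(\kappa)$ and the fact that $\kappa>\delta$ lies above the critical point. Granting this, the inductive hypothesis applied to the pair $\tilde N\subseteq\tilde N'$ and the ordinal $\beta<\alpha$ yields $\tilde N\models\oSR(\kappa)\geq\beta$.

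Finally I would convert this into an honest strong Ramsey measure of $\M$. Exactly as at the end of the proof of Theorem~\ref{th:noNewStrongOrSuperRamsey}, I build an elementary substructure $M\prec\her{\kappa}^{N}$ of size $\kappa$ in $\kappa$-many steps with $A,E,V_\kappa\in M$, and set $U=\overline U\cap M$. By the representing-function mechanism of the proof of Theorem~\ref{th:rankMeasures}, the set $X=\{\xi<\kappa\mid\oSR(\xi)\geq g^E(\xi)\}$ lies in $\overline U$ (since $\tilde N\models\oSR(\kappa)\geq\beta=[g^E]_{\overline U}$) and, being definable over $V_\kappa$ from $E$, lies in $M$, so $X\in U$ and $[g^E]_U=\beta$ forces $N_U\models\oSR(\kappa)\geq\beta$. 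Collecting one such $A$-good $U$ for every $A\subseteq\kappa$ and applying Corollary~\ref{cor:approxMitchellRank} in $\M$ gives $\oSR(\kappa)^{\M}\geq\alpha$, completing the induction. The super Ramsey case is analogous and cleaner: starting from $M_W\prec\her{\kappa}^{\M'}$, the previous theorem already delivers $\her{\kappa}^{N}\prec\her{\kappa}^{\M}$, so the reflection of the rank through the cut-down is immediate from elementarity and no representing-function argument is needed.
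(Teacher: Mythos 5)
Your setup is the same as the paper's: induct on $\alpha$ using the Corollary~\ref{cor:approxMitchellRank} description of the rank, pick in $\M'$ a suitable strong Ramsey measure $W$ whose ultrapower has rank at least $\beta$, and run the machinery of Theorem~\ref{th:noNewStrongOrSuperRamsey} to get $N=\M\cap N'$, $N'=V_{j(\kappa)}^{N_W}$, and $\overline U=N\cap W\in\M$. But the step you yourself flag as the main obstacle is a genuine gap, not a routine verification. You apply the induction hypothesis to the pair of ultrapowers $\tilde N=\Ult(N,\overline U)\subseteq\tilde N'=\Ult(N',W)$, and for that you need this pair to satisfy all the hypotheses: the $\delta$-cover and $\delta$-approximation properties, the same ordinals, and practicality. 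None of this is available. In Theorem~\ref{th:noNewStrongOrSuperRamsey}, the pair $N\subseteq N'$ inherited cover and approximation for free because it is the pointwise image under the single elementary map $j$ of the pair $V_\kappa^{\M}\subseteq V_\kappa^{\M'}$ (Claim~\ref{cl:VkappaCoverApprox}), and $N_W$ is correct about these properties. There is no analogous elementary map carrying the pair $N\subseteq N'$ to the pair $\tilde N\subseteq\tilde N'$: the two ultrapowers are taken with different ultrafilters over different models, and the factor map $[f]_{\overline U}\mapsto[f]_W$, while $\in$-preserving, is not elementary and its range need not be transitive, so even the basic containment $\tilde N\subseteq\tilde N'$ (let alone $\tilde N=\M\cap\tilde N'$ or equality of ordinal heights --- functions in $N'\setminus N$ may well produce ordinals of $\tilde N'$ above the image of $\tilde N$) is unestablished. ``Replaying Claims~\ref{cl:mutualCover}--\ref{cl:expectedIntersection} one level up'' does not go through, because those claims also used that $N'$ is a $\kappa$-model inside the ambient pair $\M\subseteq\M'$ with its own cover and approximation properties; for $\tilde N\subseteq\tilde N'$ there is no such ambient structure.

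The detour through ultrapowers is also unnecessary, and avoiding it is exactly how the paper proceeds. Apply the induction hypothesis directly to the pair $N\subseteq N'$: since $N'=V_{j(\kappa)}^{N_W}$, we have $N'\models\oSR(\kappa)\geq\beta$ (the paper even arranges $N'\models\oSR(\kappa)=\beta$ via Corollary~\ref{cor:MitchellRank}, as its induction statement is phrased with equality in the larger model), and this pair provably satisfies all hypotheses, so $N\models\oSR(\kappa)\geq\beta$. Then no representing-function argument is needed to push the rank into a size-$\kappa$ object: by Corollary~\ref{cor:approxMitchellRank} the statement ``for every $B\subseteq\kappa$ and $\xi<\beta$ there is a $B$-good strong Ramsey measure whose ultrapower has rank at least $\xi$'' holds in $\her{\kappa}^N$; it passes to the $\kappa$-model $M\prec\her{\kappa}^N$ by elementarity, and then to the ultrapower $N_U$ of $M$ by $U=\overline U\cap M$ because weak amenability gives $\pset(\kappa)^M=\pset(\kappa)^{N_U}$ (and $\kappa$-models are correct about being a strong Ramsey measure), whence $N_U\models\oSR(\kappa)\geq\beta$. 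Your closing remarks about the super Ramsey case and about the final Łoś/representing-function computation are fine in themselves, but as written the proof rests on an unproven --- and quite possibly false --- preservation claim for the ultrapower pair, so it does not constitute a proof.
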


\begin{proof}
We will argue by induction on $\alpha$ that if $\M$ and $\M'$ are as in the
hypothesis and $\oSR(\kappa)=\alpha$ in $\M'$, then $\oSR(\kappa)\geq\alpha$ in $\M$. So
suppose inductively that the statement holds for all $\beta<\alpha$. Fix some pair
$\M$ and $\M'$ as in the hypothesis and suppose that $\oSR(\kappa)=\alpha$
in $\M'$. By Corollary~\ref{cor:approxMitchellRank} we have to show that
$\M$ has a strong Ramsey measure $U$ with $N_U\models\oSR(\kappa)\geq\beta$ for every
$\beta<\alpha$ and $A\subseteq\kappa$ in $\M$. So fix $\beta<\alpha$ and $A\subseteq\kappa$
in $\M$. In $\M'$ there is a strong Ramsey measure $W$ such that $M_W$ contains $A$ and
$V_\kappa^{\M}$ and $N_W\models\oSR(\kappa)=\beta$. In what follows we use the notation from
the proof of Theorem~\ref{th:noNewStrongOrSuperRamsey}. Construct $N$ and $N'$. Clearly
$N'\models\oSR(\kappa)=\beta$, since $N'=V_{j(\kappa)}^{N_W}$.
Since \(N\subseteq N'\) has the \(\delta\)-cover and \(\delta\)-approximation properties,
we may apply the induction hypothesis to this pair and conclude that
$N\models\oSR(\kappa)\geq\beta$. Corollary~\ref{cor:approxMitchellRank} now implies that
$\her{\kappa}^N$ satisfies that for every $B\subseteq\kappa$ and $\xi<\beta$ there is a
strong Ramsey measure $U_B$ with $N_{U_B}\models\oSR(\kappa)\geq\xi$ and so
$M\prec \her{\kappa}^N$ must satisfy this statement as well. It follows that the ultrapower
of $M$ by $U$ must, by weak amenability, satisfy the same statement, meaning that
$\oSR(\kappa)\geq\beta$ there, which is precisely what we set out to establish.
\end{proof}

\subsection{Ramsey cardinals}
The arguments presented in the previous section do not generalize directly to Ramsey
cardinals because we can no longer work with $\kappa$-models, whose properties were used
crucially in several places in the proof of Theorem~\ref{th:noNewStrongOrSuperRamsey} to pass
between $\M'$ and $N'$. Nevertheless, we can modify the proof to work for Ramsey cardinals
with the extra assumption that $\M'$ doesn't have new countable sequences of elements of
$\M$.

\begin{theorem}\label{th:noNewRamsey}
Suppose that $\M$ and $\M'$ are practical for \(\kappa\) and that
$\M\subseteq \M'$ has the $\delta$-cover and $\delta$-approximation properties for some
regular cardinal $\delta$ of $\M'$, they have the same ordinals and
$\M^\omega\subseteq \M$ in $\M'$.
If $\kappa>\delta$ is Ramsey in $\M'$, then $\kappa$ was already Ramsey in $\M$.
\end{theorem}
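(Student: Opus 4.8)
The plan is to mirror the proof of Theorem~\ref{th:noNewStrongOrSuperRamsey}, but to work throughout with \emph{certified} Ramsey measures and to replace every appeal to the $\kappa$-model closure of $N'$---which is unavailable now that $N'$ is only a weak $\kappa$-model---by a combination of the $\omega$-special structure of the ultrapower and the new hypothesis $\M^\omega\of\M$. Certified measures enter for two reasons: they are downward absolute between transitive models, and, being $\kappa$-intersecting, they let us run the single-witness argument of the claim ``$\overline U\in\M$'' without any closure of $M_W$. So, fixing $A\of\kappa$ in $\M$, I would work in $\M'$ and, using the certified-measure construction together with Lemma~\ref{lem:omegaspecial}, choose a certified Ramsey measure $W$ with $A,V_\kappa^\M\in M_W$ and with $M_W$ $\omega$-special. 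Let $j\colon M_W\to N_W$ be the ultrapower, put $N=j(V_\kappa^\M)$ and $N'=V_{j(\kappa)}^{N_W}$, and note that $N'$ (and $N_W$) are now $\omega$-special rather than $\ltkappa$-closed.

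The first two claims survive unchanged: the pair $V_\kappa^\M\of V_\kappa^{\M'}$ still has the $\delta$-cover and $\delta$-approximation properties (Claim~\ref{cl:VkappaCoverApprox}), so by elementarity $N\of N'$ has them inside $N_W$, and $V_\kappa^N=V_\kappa^\M$ exactly as in Claim~\ref{cl:sameVkappa}, since that argument never used closure. The heart of the proof is then the analogue of Claims~\ref{cl:mutualCover}--\ref{cl:expectedIntersection}, namely that $\M$ and $N$ have the same small subsets of $\ORD^N$ and that $N=\M\cap N'\in\M$. Once this is in hand the remaining steps are easy: $\overline U=W\cap N$ lies in $\M$ by the $\kappa$-intersecting single-$\beta$ argument (which needs only $N\in\M$), and $\overline U$ is a weakly amenable $N$-ultrafilter whose weak amenability and normality are absolute to $\M$ because functions of $N$ lie in $N'\of N_W$. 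Crucially, $\overline U$ remains $\omega_1$-intersecting \emph{in} $\M$: a countable subfamily witnessing a failure would be an $\omega$-sequence of elements of $\M$, hence already present in $\M'$ by $\M^\omega\of\M$, where its intersection is nonempty; so well-foundedness of the ultrapower is preserved. Finally I would cut $N$ down to a genuine weak $\kappa$-model $M\prec\her{\kappa}^N$ in $\M$ with $A\in M$ and set $U=\overline U\cap M$, exhibiting a Ramsey witness for $A$ in $\M$.

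The main obstacle is precisely the same-small-subsets step, where the strong-Ramsey proof used that $N'$, being a $\kappa$-model, contains every $\M'$-small subset of $\ORD^N$ and can therefore cover it. With $N'$ only $\omega$-special this fails, so the bouncing construction of Claim~\ref{cl:mutualCover} must be rebuilt. The approach I would pursue is to exploit the tower $N'=\Union_{n<\omega}Q_n$ of size-$\kappa$ pieces $Q_n\in N'$ furnished by $\omega$-specialness: a small subset of $\ORD^N$ is reduced, via a cover $Y\in N\cap\M$, to a subset of the ordinal $\mathrm{ot}(Y)<\delta<\kappa$, on which $N$ and $\M$ must agree because $V_\kappa^N=V_\kappa^\M$; the covers themselves are produced as $\omega$-unions along the tower, which land in $\M$ by $\M^\omega\of\M$. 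The delicate point is that when $\delta>\omega_1$ the extension $\M'$ may contain genuinely new small subsets of $\kappa$ absent from $\M$, so one cannot simply assert that small sets descend; the argument must instead show that any such new set also avoids $N$ (again by $V_\kappa^N=V_\kappa^\M$), so that the coincidence of small subsets of $\ORD^N$ in $N$ and in $\M$ is recovered level by level along the tower. Making this covering precise, and checking that the resulting $\omega$-unions are simultaneously in $N$ and in $\M$, is where the bulk of the work will lie.
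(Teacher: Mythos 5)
Your proposal has a genuine gap at its central step, and it is precisely the step that the paper's proof is designed to avoid. You propose to recover the full analogues of Claims~\ref{cl:mutualCover}--\ref{cl:expectedIntersection}, i.e.\ that $\M$ and $N$ have the same $\M'$-small subsets of $\ORD^N$ and that $N=\M\cap N'$ is an \emph{element} of $\M$. The paper explicitly concedes that this is out of reach: ``We cannot prove that $N$ or even $\her{\kappa}^N$ is an element of $\M$.'' Your sketch for rebuilding the bouncing construction does not close this hole. The $\delta$-cover and $\delta$-approximation properties of the pair $N\subseteq N'$ can only be applied to sets that are \emph{elements of} $N'$; in the strongly Ramsey case it was the $\ltkappa$-closure of the $\kappa$-model $N'$ that placed the initial set $X$, the $\M$-side covers $X_{2n}$, the limit-stage unions, and the final union $Y$ inside $N'$. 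When $N'$ is merely $\omega$-special none of these memberships is available: $\omega$-specialness is a structural property (a tower of size-$\kappa$ elementary pieces, each an element of the model) used for the diagonalization criterion, not a closure property, and the hypothesis $\M^\omega\subseteq\M$ puts countable unions of elements of $\M$ into $\M$ --- it does nothing to put them into $N'$. So the ``$\omega$-unions along the tower'' you describe land in $\M$ but there is no mechanism to get them into $N'$, and without that the approximation property of $N\subseteq N'$ cannot be invoked. You yourself flag this as ``where the bulk of the work will lie,'' which is to say the proposal defers the core difficulty rather than resolving it.

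The missing idea is the paper's workaround: give up on $N\in\M$ entirely. One proves only the much weaker Claim~\ref{cl:sameSubsetsOfKappa}, that $\pset(\kappa)^N=\M\cap\pset(\kappa)^{N'}$ (and hence $\her{\kappa}^N\subseteq\M$ \emph{as a subset}), which needs no bouncing at all: a subset of $\kappa$ of size less than $\delta$ in either $\M$ or $N$ is bounded in $\kappa$ (by inaccessibility), hence lies in $V_\kappa^\M=V_\kappa^N$, and then a single application of the relevant approximation property finishes each inclusion. Next one shows $\overline W=W\cap N$ is weakly amenable to $N$ (your single-$\beta$ argument is fine here, and it needs no certification --- the relevant small families lie in $N_W$, where normality of $W$ supplies the common element $\beta$, so ordinary Ramsey measures suffice). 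Finally --- and this is where $\M^\omega\subseteq\M$ is actually used --- one works in $\M'$ and builds, as in Lemma~\ref{lem:omegaspecial}, an $\omega$-chain $\la(M_n,U_n)\mid n<\omega\ra$ of size-$\kappa$ elementary submodels of $\her{\kappa}^N$ with $U_n=M_n\cap W$; each $M_n$ and $U_n$ is an element of $\her{\kappa}^N\subseteq\M$, so the countable-closure hypothesis places $M=\Union_n M_n$ and $U=\Union_n U_n$ in $\M$, yielding the Ramsey witness for $A$ there. (Incidentally, the transfer of $\omega_1$-intersection to $\M$ does not need the closure hypothesis at all: any countable subfamily of $U$ in $\M$ is a countable subfamily of $W$ in $\M'$, where it has nonempty intersection, and nonemptiness is absolute.) Your proposal shares the endpoints of this argument but routes the middle through a claim that the available hypotheses cannot support.
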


\begin{proof}
Fix some $A\subseteq\kappa$ in $\M$. In $\M'$, let $W$ be an $\{A,V_\kappa^{\M}\}$-good
Ramsey measure and let $j:M_W\to N_W$ be the ultrapower by $W$. We proceed at first as in the
proof of Theorem~\ref{th:noNewStrongOrSuperRamsey}. Let $N=j(V_\kappa^{\M})$ and
$N'=j(V_\kappa^{\M'})$. As before, the pair $N\subseteq N'$ has the $\delta$-cover and
$\delta$-approximation properties, but $N'$ is no longer a $\kappa$-model. Also, as before,
$V_\kappa^N=V_\kappa^{\M}$.

\begin{claim}\label{cl:sameSubsetsOfKappa}
$\pset^N(\kappa)=\M\cap \pset^{N'}(\kappa)$ and hence $\her{\kappa}^N\subseteq \M$.
\end{claim}

\begin{proof}
First, we show that $\pset^N(\kappa)\subseteq \M$ using the $\delta$-approximation property
of $\M\subseteq\M'$. So suppose that $B\subseteq \kappa$ is in $N$. Fix a set
$x\subseteq\kappa$ of size less than $\delta$ in $\M$, and note that
$x\in V_\kappa^{\M}\subseteq N$. Thus, $x\cap B\in V_\kappa^N\subseteq \M$ and the
\(\delta\)-approximation property gives \(B\in\M\).

Next, we show that subsets of $\kappa$ in the intersection of $\M$ and $N'$ must be in $N$
using the $\delta$-approximation property of $N\subseteq N'$. So suppose that
$B\subseteq\kappa$ and $B\in \M\cap N'$. Let $x\subseteq\kappa$ be a set of size less than
$\delta$ in $N$, meaning that $x\in V_\beta^{\M}$ for some $\beta<\kappa$. So $x\cap B$ is
also in $V_\beta^{\M}$, and hence is in $N$.

Finally, any \(X\in\her{\kappa}^N\) is coded by a subset of \(\kappa\) via Mostowski coding,
and this coding can be undone in \(\M\).
\end{proof}

We cannot prove that $N$ or even $\her{\kappa}^N$ is an element of $\M$. So instead we will
find a weak $\kappa$-model $M\prec \her{\kappa}^N$ for which $U=M\cap W$ is a weakly amenable
$M$-ultrafilter so that both $M$ and $U$ are in $\M$.

First, we argue that $\overline W=W\cap N$ is weakly amenable to $N$. Let $S$ be a subset of
$\pset(\kappa)$ of size $\kappa$ in $N$. By weak amenability $\overline S=S\cap W$ is in
$N'$. Now we will use the $\delta$-approximation property of $N\subseteq N'$ to get
$\overline S$ into $N$. Let $x\subseteq S$ be a set of size less than $\delta$ in $N$. We can
assume that whenever $B\subseteq\kappa$ is in $x$, then so is the complement of $B$ in
$\kappa$. Since $W$ is an $N_W$-ultrafilter and $\overline S\cap x$ is in $N_W$, it follows
that there is some $\beta$ that is an element of every $B\in \overline S\cap x$. Thus, the
sets in $\overline S\cap x$ are precisely the sets in $x$ having $\beta$ as an element, and
so $\overline S\cap x$ is in $N$.

Now we build $M\prec\her{\kappa}^N$, working in $\M'$, as in the proof of
Lemma~\ref{lem:omegaspecial} from the sequence $\la (M_n,U_n)\mid n<\omega\ra$, so that
$M=\Union_{n<\omega}M_n$ and $U=\Union_{n<\omega} U_n\subseteq W$ is a weakly amenable
$\omega_1$-intersecting $M$-ultrafilter. Since each $M_n$ and $U_n$ are in
$\her{\kappa}^N\subseteq \M$, it follows by our closure assumption that $M$ and $U$ are in
$\M$.
\end{proof}

\begin{theorem}\label{th:rankDecreasesRamsey}
Suppose that $\M$ and $\M'$ are practical for \(\kappa\) and that $\M\subseteq \M'$
has the $\delta$-cover and $\delta$-approximation properties for some regular cardinal
$\delta$ of $\M'$, they have the same ordinals and $\M^\omega\subseteq \M$ in \(\M'\). If
$\kappa>\delta$, then $\oR(\kappa)^{\M'}\leq \oR(\kappa)^\M$.
\end{theorem}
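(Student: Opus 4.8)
The plan is to argue by induction on $\alpha$ that, for every pair $\M\subseteq\M'$ satisfying the hypotheses of the theorem, $\oR(\kappa)^{\M'}\geq\alpha$ implies $\oR(\kappa)^{\M}\geq\alpha$; ranging over all $\alpha$ this yields the stated inequality. The skeleton follows the proof of Theorem~\ref{th:rankDecreasesStrongRamsey}, but since we can no longer hope that $N$ (or even $\her{\kappa}^N$) is an element of $\M$, I would graft the model-building device from the proof of Theorem~\ref{th:noNewRamsey} onto the rank induction.

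Assuming the claim for all $\beta<\alpha$, fix a pair $\M\subseteq\M'$ as in the hypothesis with $\oR(\kappa)^{\M'}\geq\alpha$ and work toward $\oR(\kappa)^{\M}\geq\alpha$. By Corollary~\ref{cor:approxMitchellRank} it suffices to produce, for each $A\of\kappa$ in $\M$ and each $\beta<\alpha$, an $A$-good Ramsey measure $U\in\M$ with $N_U\models\oR(\kappa)\geq\beta$. Fixing $A$ and $\beta$, Corollary~\ref{cor:approxMitchellRank} supplies in $\M'$ an $\{A,V_\kappa^{\M}\}$-good Ramsey measure $W$ with $N_W\models\oR(\kappa)\geq\beta$. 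I then set up $j\colon M_W\to N_W$, $N=j(V_\kappa^{\M})$ and $N'=j(V_\kappa^{\M'})=V_{j(\kappa)}^{N_W}$ exactly as in the proof of Theorem~\ref{th:noNewRamsey}. As recorded there, $N\subseteq N'$ inherits the $\delta$-cover and $\delta$-approximation properties (for the same regular $\delta=j(\delta)<\kappa$), has the same ordinals (both have height $j(\kappa)$), consists of models that are practical for $\kappa$, satisfies $V_\kappa^N=V_\kappa^{\M}$, and has $\her{\kappa}^N\of\M$ by Claim~\ref{cl:sameSubsetsOfKappa}; moreover $N'\models\oR(\kappa)\geq\beta$, since the rank is computed inside $\her{\kappa}^{N'}=\her{\kappa}^{N_W}=M_W$.

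The one genuinely new ingredient, and the step I expect to be the main obstacle, is verifying that the pair $N\subseteq N'$ also satisfies $N^\omega\of N$ in $N'$, so that the induction hypothesis applies to it and delivers $N\models\oR(\kappa)\geq\beta$. This is where the global hypothesis $\M^\omega\of\M$ must be married to the $\omega_1$-intersecting property of $W$, which was unnecessary in the strongly Ramsey case. Given an $\omega$-sequence $\vec s=[g]_W\in N'$ with all entries in $N$, I would consider the sets $D_n=\{\xi<\kappa\mid g(\xi)(n)\in V_\kappa^{\M}\}$, each of which lies in $W$ by \L o\'s's theorem, together with the set $E=\{\xi<\kappa\mid g(\xi)\in V_\kappa^{\M}\}$, which lies in $M_W$ by definability. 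Applying $\M^\omega\of\M$ to the sequences $g(\xi)\in M_W\of\M'$ whose entries lie in $V_\kappa^{\M}\of\M$ — each such $g(\xi)$ is then in $\M$ and, having rank below $\kappa$, in $V_\kappa^{\M}$ — one sees that $E=\bigcap_n D_n$. Were $\kappa\setminus E$ in $W$, the countably many sets $\kappa\setminus E,D_0,D_1,\dots$ of $W$ would have empty intersection, contradicting that $W$ is $\omega_1$-intersecting; hence $E\in W$ and \L o\'s's theorem yields $\vec s\in j(V_\kappa^{\M})=N$.

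With $N\models\oR(\kappa)\geq\beta$ secured, I would close the argument as in Theorem~\ref{th:noNewRamsey}. Since $N$ is practical, Corollary~\ref{cor:approxMitchellRank} shows the rank bound is witnessed inside $\her{\kappa}^N$, so $\her{\kappa}^N$ satisfies ``$\oR(\kappa)\geq\beta$'' in the $\her{\kappa}$-internal sense. Building the $\omega$-special weak $\kappa$-model $M\prec\her{\kappa}^N$ with $A\in M$ and the weakly amenable $\omega_1$-intersecting $M$-ultrafilter $U=M\cap W$ precisely as in that proof, the closure assumption $\M^\omega\of\M$ places both $M$ and $U$ in $\M$. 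Finally, because ``$\oR(\kappa)\geq\beta$'' is, by Corollary~\ref{cor:approxMitchellRank}, a property of $\her{\kappa}$ alone, it reflects from $\her{\kappa}^N$ to $M\prec\her{\kappa}^N$, and weak amenability of $U$ gives $\her{\kappa}^{N_U}=\her{\kappa}^M$, whence $N_U\models\oR(\kappa)\geq\beta$. This is the required $A$-good Ramsey measure in $\M$, completing the induction.
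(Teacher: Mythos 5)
Your proposal is correct and is essentially the paper's own proof: the paper disposes of this theorem with the single remark that the argument is identical to Theorem~\ref{th:rankDecreasesStrongRamsey}, which amounts exactly to your plan of running that rank induction with the $N\subseteq N'$ machinery of Theorem~\ref{th:noNewRamsey} in place of Theorem~\ref{th:noNewStrongOrSuperRamsey}, and then extracting the measure $U=M\cap W$ on an $\omega$-special $M\prec\her{\kappa}^N$ lying in $\M$ by the countable-closure hypothesis. The only cosmetic difference is your hands-on \L o\'s/$\omega_1$-intersecting verification that $N^\omega\subseteq N$ in $N'$; this also follows immediately by elementarity of $j$ (as the cover and approximation properties do), since the pair $V_\kappa^{\M}\subseteq V_\kappa^{\M'}$ satisfies the corresponding absolute closure statement by the hypothesis $\M^\omega\subseteq\M$ in $\M'$ together with the inaccessibility of $\kappa$.
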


\noindent The proof is identical to Theorem~\ref{th:rankDecreasesStrongRamsey}.

\begin{question}
Can we remove the assumption that $\M^\omega\subseteq \M$ in $\M'$ from
Theorem~\ref{th:rankDecreasesRamsey}?
\end{question}

\subsection{$\alpha$-iterable cardinals}
For completeness, we will argue here that extensions with cover and approximation properties
cannot create new $\alpha$-iterable cardinals provided that the extension has no new
countable sequences from the old model.

\begin{theorem}\label{th:noNewIterable}
Suppose that \(\M\) and \(\M'\) are practical for \(\kappa\) and that
\(\M\subseteq\M'\) has the \(\delta\)-cover and \(\delta\)-approximation properties
for some regular cardinal \(\delta\) of \(\M'\), they have the same ordinals and
\(\M^\omega\subseteq\M\) in \(\M'\). If \(\kappa>\delta\) is \(\alpha\)-iterable in
\(\M'\), then it was already \(\alpha\)-iterable in \(\M\).
\end{theorem}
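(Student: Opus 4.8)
The plan is to mirror the proof of Theorem~\ref{th:noNewRamsey} almost verbatim through the construction of the model and its ultrafilter, and then to supply a single new ingredient handling iterability. First I would fix $A\subseteq\kappa$ in $\M$ and, using that $\kappa$ is $\alpha$-iterable in $\M'$, obtain an $\{A,V_\kappa^{\M}\}$-good weakly amenable $\alpha$-iterable $M_W$-ultrafilter $W$ with ultrapower $j:M_W\to N_W$. Setting $N=j(V_\kappa^{\M})$ and $N'=j(V_\kappa^{\M'})$, the arguments from Theorem~\ref{th:noNewRamsey} apply unchanged, since they use only the cover and approximation properties and not iterability: the pair $N\subseteq N'$ inherits the $\delta$-cover and $\delta$-approximation properties, we have $V_\kappa^N=V_\kappa^{\M}$, and $\her{\kappa}^N\subseteq \M$ by the analogue of Claim~\ref{cl:sameSubsetsOfKappa}. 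Exactly as before, $\overline W=W\cap N$ is weakly amenable to $N$, and building an $\omega$-special $M\prec\her{\kappa}^N$ as in Lemma~\ref{lem:omegaspecial} yields a weakly amenable $M$-ultrafilter $U=\Union_{n<\omega}U_n=M\cap W$; since each $M_n,U_n\in\her{\kappa}^N\subseteq\M$, the closure assumption $\M^\omega\subseteq\M$ places both $M$ and $U$ into $\M$.

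The one point at which the Ramsey proof fails to transfer is the final iterability claim: there $\omega_1$-intersectedness of $U$ was inherited trivially from $W$, whereas here $W$ is merely $\alpha$-iterable, and $\alpha$-iterability is not visibly preserved under restriction. To recover it I would use a copying (factor) argument. Since $M\prec\her{\kappa}^N\subseteq\her{\kappa}^{N_W}=M_W$, we have $M\subseteq M_W$ and $\pset(\kappa)^M\subseteq\pset(\kappa)^{M_W}$ with $U=W\cap M$, so that $k_0([f]_U)=[f]_W$ for $f\in M$ is a well-defined elementary embedding $\Ult(M,U)\to\Ult(M_W,W)$ commuting with the ultrapower maps. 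As both $U$ and $W$ are weakly amenable, this factor map copies up the entire iteration: one defines embeddings $k_\xi$ between the $\xi$-th iterated ultrapowers by recursion, using the images of $U$ and $W$ at successor stages and taking direct limits at limits, so that each $k_\xi$ is elementary and commutes with the iteration maps. Consequently, whenever the $\xi$-th iterate of $W$ is well-founded, $k_\xi$ embeds the $\xi$-th iterate of $U$ into a transitive structure and thereby forces it to be well-founded. Since the first $\alpha$-many iterated ultrapowers of $W$ are well-founded, the maps $k_\xi$ transfer this to the corresponding iterates of $U$, so that $U$ is $\alpha$-iterable.

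Finally I would transfer this conclusion from $\M'$ down to $\M$. The iterated ultrapowers of $U$ are computed by a procedure definable from the parameters $M$ and $U$, both of which lie in $\M$, and since $\M$ and $\M'$ have the same ordinals they compute the same transitive iterates; well-foundedness is downward absolute from $\M'$ to $\M\subseteq\M'$, so $U$ remains $\alpha$-iterable as computed in $\M$. As $A$ was arbitrary, $\kappa$ is $\alpha$-iterable in $\M$. I expect the main obstacle to lie in the copying construction of the second paragraph, specifically in verifying that the factor maps $k_\xi$ are well-defined and commute with the iteration maps through limit stages; by comparison, the containment $M\subseteq M_W$ that makes $[f]_W$ meaningful for $f\in M$ and the final absoluteness bookkeeping are routine.
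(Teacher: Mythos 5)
Your construction is exactly the paper's: the paper likewise fixes (in $\M'$) a good weak $\kappa$-model $M_W$ with an $\alpha$-iterable $M_W$-ultrafilter $W$, forms the pair $N\subseteq N'$, proves $\pset^N(\kappa)=\M\cap\pset^{N'}(\kappa)$ and the weak amenability of $W\cap N$ by the arguments of Theorem~\ref{th:noNewRamsey}, builds the sequence $\la (M_n,U_n)\mid n<\omega\ra$ inside $\M'$, and uses the closure assumption $\M^\omega\subseteq\M$ to place $M$ and $U$ in $\M$. The only divergence is at the final step: where you prove the transfer of $\alpha$-iterability from $W$ to $U=W\cap M$ by a copying argument, the paper simply cites Lemma~3.8 of \cite{gitman:welch}, which is precisely this transfer statement. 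So your proposal is essentially the paper's proof with that citation unpacked; what this buys is self-containment, at the cost of redoing known machinery about iterating weakly amenable $M$-ultrafilters.

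Two points of care about your second paragraph. First, a minor one: for $N'=j(V_\kappa^{\M'})$ to make sense you need $V_\kappa^{\M'}\in M_W=\dom(j)$, so the measure should be chosen $\{A,V_\kappa^{\M},V_\kappa^{\M'}\}$-good, as the paper does, not merely $\{A,V_\kappa^{\M}\}$-good. Second, and more substantive: your claim that $k_0([f]_U)=[f]_W$ is \emph{elementary} is stronger than what is available. Full elementarity would require the \L{}o\'{s} sets $\{\xi<\kappa\mid M\models\varphi(f_1(\xi),\dots,f_n(\xi))\}$ and $\{\xi<\kappa\mid M_W\models\varphi(f_1(\xi),\dots,f_n(\xi))\}$ to agree modulo the ultrafilters, and there is no reason for that: the inclusion $M\subseteq M_W$ need not be elementary, since $\her{\kappa}^N$ need not be elementary in $\her{\kappa}^{N_W}$ (cover and approximation properties do not yield this). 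What you do have, because $M$ and $M_W$ are both transitive and $U=W\cap M$, is that the inclusion is $\Sigma_0$-elementary as a map of structures $(M,\in,U)\to (M_W,\in,W)$ respecting the ultrafilter predicates, and hence so are the induced factor maps. This weaker amount of elementarity, together with weak amenability on both sides, is enough both to propagate the copying recursion through successor and limit stages and to transfer well-foundedness downward, for which mere $\in$-preservation suffices. So your argument survives, but the recursion must be run at this level rather than with fully elementary maps; as stated, the elementarity claim is a gap, albeit a repairable one. Your closing absoluteness step, transferring $\alpha$-iterability of $U$ from $\M'$ down to $\M$, is routine and is left implicit in the paper.
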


\begin{proof}
Fix $A\subseteq\kappa$ in $V$. Working in $V'$, find an
\(\{A,V_\kappa^{\M},V_\kappa^{\M'}\}\)-good weak $\kappa$-model $M_W$ for which there is an
$\alpha$-iterable $M_W$-ultrafilter $W$. We follow the proof of Theorem~\ref{th:noNewRamsey}
exactly by considering the pair $N\subseteq N'$. We can show that
\(\mathcal{P}^N(\kappa)=\M\cap\mathcal{P}^{N'}(\kappa)\)
and that \(\overline{W}=W\cap N\) is weakly amenable to \(N\) exactly as there.
Working in \(\M'\) we then build the sequence $\la(M_n, U_n)\mid n<\omega\ra$ of elementary
submodels of \(\her{\kappa}^N\) and filters, and this sequence must be in $V$ by our
assumption. Thus, $M=\Union_{n<\omega}M_n$ and $U=\Union_{n<\omega}U_n$ are in $V$.
Finally, Lemma~3.8 of \cite{gitman:welch} implies that $U$ is $\alpha$-iterable.
\end{proof}

\begin{question}
Can we remove the assumption that $\M^\omega\subseteq \M$ in $\M'$ from
Theorem~\ref{th:noNewIterable}?
\end{question}

\section{Killing the M-rank softly}
We can use forcing to softly kill the rank of a Ramsey or Ramsey-like cardinal, meaning that,
if $\kappa$ has rank $\alpha$ and $\beta<\alpha$, then there is a cofinality preserving
forcing extension in which $\kappa$ has rank $\beta$. Let's consider the case of Ramsey
cardinals. We will obtain the desired forcing extension by carefully adding a club through
ordinals $\delta$ with $\oR(\delta)< g^E(\delta)$ (where $g^E$ is a representing function for
$\alpha$), while preserving $\oR(\kappa)\geq\beta$. The result will follow because no weak
$\kappa$-model containing such a club can have its ultrapower satisfy $\oR(\kappa)=\alpha$.

Recall that, if \(U\) is a Ramsey or Ramsey-like measure on \(M_U\) with the ultrapower
map \(j\colon M_U\to N_U\), then \(M_U^*=V_{j(\kappa)}^{N_U}\).

\begin{lemma}\label{le:orderOmegaSpecial}
Suppose $\oR(\kappa)=\alpha>0$. Then for every $A\subseteq\kappa$ and $\beta<\alpha$, there
is an $A$-good Ramsey measure $U$ such that $M_U$ is $\omega$-special and
$N_U\models\oR(\kappa)=\beta$, and hence $M_U^*$ is $\omega$-special and
$M_U^*\models\oR(\kappa)=\beta$.
\end{lemma}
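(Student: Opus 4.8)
The plan is to combine the representing-function technique from the proof of Theorem~\ref{th:rankMeasures} with the $\omega$-special construction of Lemma~\ref{lem:omegaspecial}. The representing function lets me encode the statement ``$\oR(\kappa)=\beta$ in the ultrapower'' as the membership of a single set in the ultrafilter, and membership of sets is exactly what survives when one thins a measure down to an $\omega$-special submodel.

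First I would fix $A\of\kappa$ and $\beta<\alpha$ and, assuming $\beta\geq\kappa$ (the case $\beta<\kappa$ being handled by a constant function in place of $g^E$), fix a well-order $E$ of $\kappa$ of order type $\beta$ together with its representing function $g^E$. Applying Corollary~\ref{cor:MitchellRank} to $\oR(\kappa)=\alpha>\beta$, I obtain an $\{A,E,V_\kappa\}$-good Ramsey measure $W$ with $N_W\models\oR(\kappa)=\beta$, and I may assume $\kappa$ is the largest cardinal of $M_W$, so that $M_W=\her{\kappa}^{N_W}\in N_W$. Because $V_\kappa\in M_W$ and the rank of any $\xi<\kappa$ is computed inside $\her{\xi}\of V_\kappa$ (Corollary~\ref{cor:approxMitchellRank}), the set $X=\{\xi<\kappa\mid\oR(\xi)=g^E(\xi)\}$ is computed absolutely in $M_W$, and the {\L}o\'{s} theorem for $W$, together with $[g^E]_W=\beta$ and $[\mathrm{id}]_W=\kappa$, gives $X\in W$.

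Next I would run the construction of Lemma~\ref{lem:omegaspecial} inside $N_W$: build a chain of transitive $M_n\prec\her{\kappa}^{N_W}=M_W$ of size $\kappa$ with $A,E,V_\kappa\in M_0$ and $M_n,U_n\in M_{n+1}$, where $U_n=M_n\cap W$, and set $M_U=\Union_n M_n$ and $U=\Union_n U_n=M_U\cap W$. As in that lemma, $M_U$ is $\omega$-special and $U$ is a weakly amenable $\omega_1$-intersecting $M_U$-ultrafilter, i.e.\ an $A$-good Ramsey measure, with $A,E,V_\kappa\in M_U$. To see $N_U\models\oR(\kappa)=\beta$, note that $X$ is definable over $V_\kappa$ from $E$, so $X\in M_U$; since $X\in W$ and $U=M_U\cap W$, this gives $X\in U$. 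Applying {\L}o\'{s} to the ultrapower $j\colon M_U\to N_U$, using that $E\in M_U$ forces $[g^E]_U=\beta$ while $M_U$ computes $\oR(\xi)$ for $\xi<\kappa$ exactly as $V$ does (again because $V_\kappa\in M_U$), the membership $X\in U$ yields $N_U\models\oR(\kappa)=\beta$. The final ``hence'' clause is then immediate: $M_U^*$ is $\omega$-special whenever $M_U$ is, and since $M_U^*=V_{j(\kappa)}^{N_U}$ shares both its $\her{\kappa}$ (namely $M_U$) and its $V_{\kappa+3}$ with $N_U$, Corollary~\ref{cor:approxMitchellRank} shows it computes the same rank, so $M_U^*\models\oR(\kappa)=\beta$.

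The main obstacle, and the reason the naive approach of merely quoting Corollary~\ref{cor:MitchellRank} fails, is that replacing $W$ by the thinned $\omega$-special measure $U=M_U\cap W$ replaces the ultrapower $N_W$ by an entirely different ultrapower $N_U$, so I must argue that the rank value $\beta$ in the ultrapower is preserved. The representing-function encoding is exactly what makes this tractable: the rank statement becomes the single set $X$, and $X$ passes from $W$ into $U$ precisely because it lands in the submodel $M_U$. The only delicate point is the absoluteness of the rank function $\xi\mapsto\oR(\xi)$ for $\xi<\kappa$ across $V$, $M_W$, $M_U$ and the relevant ultrapowers, which holds because each of these models contains $V_\kappa$ and the rank of $\xi$ depends only on $\her{\xi}\of V_\kappa$, by Corollary~\ref{cor:approxMitchellRank}.
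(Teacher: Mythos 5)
Your proof is correct, but the mechanism you use to carry the rank value through the thinning is genuinely different from the paper's. Both arguments share the same skeleton: use Corollary~\ref{cor:MitchellRank} to pick a Ramsey measure whose ultrapower satisfies $\oR(\kappa)=\beta$, then run the construction of Lemma~\ref{lem:omegaspecial} inside that ultrapower to shrink to an $\omega$-special $M_U$ with $U=M_U\cap W$; the issue in both cases is why the \emph{new} ultrapower $N_U$ still computes rank exactly $\beta$. You resolve this with the representing-function trick: encode the rank statement as the single set $X=\{\xi<\kappa\mid\oR(\xi)=g^E(\xi)\}$, get $X\in W$ by {\L}o\'{s} together with absoluteness of $\oR(\xi)$ for $\xi<\kappa$ (legitimate, since $V_\kappa\in M_W$ puts all of $\her{\xi}$, including every countable sequence relevant to being a Ramsey measure on $\xi$, inside $M_W$), observe that $X$ is definable from $E$ and $V_\kappa$ and hence lands in $M_U$ and so in $U$, and apply {\L}o\'{s} again in $N_U$. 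This is exactly the device the paper uses in the proof of Theorem~\ref{th:rankMeasures}~(2) to build certified measures, redeployed here with equality in place of $\geq$. The paper's own proof of this lemma instead argues by elementarity: by Corollary~\ref{cor:approxMitchellRank} and weak amenability, the statement $N_{\overline U}\models\oR(\kappa)=\beta$ is equivalent to a first-order assertion about $M_{\overline U}=\her{\kappa}^{N_{\overline U}}$ with parameter $\beta$ (for every $B\subseteq\kappa$ and $\xi<\beta$ there is a $B$-good Ramsey measure whose ultrapower has rank at least $\xi$, and $\beta$ is largest with this property); arranging $\beta\in M_U\prec M_{\overline U}$, this assertion transfers down by elementarity and is then lifted back to $N_U$ by Corollary~\ref{cor:approxMitchellRank} again. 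The trade-off: the paper's route needs no well-order $E$, no representing function, and no absoluteness analysis of the small ranks---only the parameter $\beta$ in the submodel and the $\her{\kappa}$-characterization of rank---so it is leaner given that Corollaries~\ref{cor:approxMitchellRank} and \ref{cor:MitchellRank} are already in hand; your route is more concrete, reducing rank preservation to the membership of one set in the ultrafilter, which visibly survives thinning, at the cost of carrying $E$ and $V_\kappa$ through the construction and justifying the absoluteness of $\oR(\xi)$ below $\kappa$ (a fact the paper itself relies on in Theorem~\ref{th:rankMeasures}~(2), so nothing extra is assumed). Your treatment of the final ``hence'' clause matches the paper's.
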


\begin{proof}
Fix $A\subseteq\kappa$ and let $\overline U$ be any $A$-good Ramsey measure with
$N_{\overline U}\models\oR(\kappa)=\beta$. Following the proof of
Lemma~\ref{lem:omegaspecial}, we construct, in $\omega$-many steps, an $A$-good Ramsey
measure $U$ such that $M_U\prec M_{\overline U}$ is $\omega$-special and $\beta\in M_U$. By
Corollary~\ref{cor:approxMitchellRank} $M_{\overline U}$ has a $B$-good Ramsey measure $W$
with $N_W\models\oR(\kappa)\geq\xi$ for every $\xi<\beta$ and $B\subseteq \kappa$ in
$M_{\overline U}$ and \(\beta\) is the largest ordinal for which this is true. Thus, by
elementarity, $M_U$ has a $B$-good Ramsey measure $W$ with $N_W\models\oR(\kappa)\geq\xi$ for
every $\xi<\beta$ and $B\subseteq \kappa$ in $M_U$ and \(\beta\) is still the largest ordinal
for which this is true. It follows that $N_U\models\oR(\kappa)=\beta$.

Recall from Section~\ref{sec:prelim} that whenever $M_U$ is $\omega$-special, then so is
$M_U^*$. Since $M_U^*=V_{j(\kappa)}^{N_U}$, where $j$ is the ultrapower map by $U$, it
satisfies $\oR(\kappa)=\beta$.
\end{proof}

\begin{theorem}
If $\oR(\kappa)=\alpha$ and $\beta<\alpha$ is any ordinal, then there is a cofinality
preserving forcing extension in which $\oR(\kappa)=\beta$. The same result holds for strongly
Ramsey and super Ramsey cardinals.
\end{theorem}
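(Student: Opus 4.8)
The plan is to realize the strategy sketched just before the statement: force to add a club \(C\) through the set \(S=\{\delta<\kappa\mid\oR(\delta)<g(\delta)\}\), where \(g\) is a representing function for \(\beta\) (the constant function with value \(\beta\) if \(\beta<\kappa\), and \(g=g^E\) for a well-order \(E\) of \(\kappa\) of order-type \(\beta\) if \(\kappa\le\beta<\kappa^+\)). Conditions are closed bounded subsets of \(S\), ordered by end-extension. First I would record that the high-rank set \(H=\kappa\setminus S\) consists of inaccessible cardinals, since \(\oR(\delta)\ge g(\delta)\ge 1\) forces \(\delta\) to be Ramsey and hence inaccessible; limits of cofinality \(\omega\) (indeed all singular limits) therefore lie in \(S\). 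Consequently \(S\) is fat stationary and \(\mathbb{P}\) is \(\ltkappa\)-distributive, countably closed, and cofinality preserving (the closure-point sups one must adjoin are singular and hence automatically in \(S\)). Let \(G\) be \(V\)-generic and \(C=\bigcup G\).

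For the upper bound \(\oR(\kappa)\le\beta\) in \(V[G]\), I would argue exactly as advertised. Suppose \(U\) is any Ramsey measure in \(V[G]\) with \(C,E\in M_U\) and let \(j\colon M_U\to N_U\) be its ultrapower. Since \(C\) is a club in \(\kappa\), the point \(\kappa\) is a limit point of \(C\), so \(\kappa\in j(C)\subseteq j(S)\); by the representing-function property \(j(g)(\kappa)=[g]_U=\beta\), whence \(N_U\models\oR(\kappa)<\beta\). As this holds for every \(U\) seeing \(C\) and \(E\), fix an \(A_0\subseteq\kappa\) coding \(\{C,E,V_\kappa\}\): no \(A_0\)-good Ramsey measure can have \(N_U\models\oR(\kappa)\ge\beta\), so by Corollary~\ref{cor:approxMitchellRank} we cannot have \(\oR(\kappa)\ge\beta+1\), i.e.\ \(\oR(\kappa)\le\beta\).

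The lower bound \(\oR(\kappa)\ge\beta\) is where the real work lies, and I would prove the needed preservation statement by induction on \(\gamma\le\beta\): \emph{if \(\M\) is practical for \(\kappa\), \(\oR(\kappa)^\M\ge\gamma\), and the club forcing is shot through \(\{\delta<\kappa\mid\oR(\delta)<h(\delta)\}\) for a threshold function \(h\) arising from a representing function for some \(\beta\ge\gamma\), then forcing preserves \(\oR(\kappa)\ge\gamma\).} By Corollary~\ref{cor:approxMitchellRank} it suffices, for each \(A\in\M[G]\) and \(\gamma'<\gamma\), to produce an \(A\)-good Ramsey measure \(W\) in \(\M[G]\) with \(N_W\models\oR(\kappa)\ge\gamma'\). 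Using Lemma~\ref{le:orderOmegaSpecial} I would pick in \(\M\) an \(\{\dot A,E\}\)-good Ramsey measure \(U\) with \(M_U\) (hence \(M_U^*\)) \(\omega\)-special and \(N_U\models\oR(\kappa)=\gamma'\); the diagonalization criterion (Lemma~\ref{le:diag2}) together with the high distributivity of \(j(\mathbb{P})\) inside \(N_U\) lets me build an \(N_U\)-generic \(j(G)\) for \(j(\mathbb{P})\) with \(j\image G\subseteq j(G)\), by threading \(C\cup\{\kappa\}\) as the initial segment of the new club (legitimate because \(N_U\models\oR(\kappa)=\gamma'<\beta=j(g)(\kappa)\), so \(\kappa\) is an admissible point of the club). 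The lifted map \(j\colon M_U[G]\to N_U[j(G)]\) is the ultrapower by a weakly amenable and (by Lemma~\ref{le:ramseycountablyclosed}, using countable closure of \(\mathbb{P}\)) \(\omega_1\)-intersecting \(M_U[G]\)-ultrafilter \(W\), with \(A\in M_W\). It then remains to see \(N_W=N_U[j(G)]\models\oR(\kappa)\ge\gamma'\): the part of \(j(\mathbb{P})\) above \(\kappa\) is \(\kappa^+\)-closed in \(N_U\) and adds no new subsets of \(\kappa\), so \(\her{\kappa}^{N_W}\) is computed from the restriction of \(j(G)\) to \(\kappa\), and the inductive hypothesis applied inside \(N_U\) (where \(\oR(\kappa)=\gamma'\) and the threshold at \(\kappa\) is \(\beta>\gamma'\)) yields \(N_W\models\oR(\kappa)\ge\gamma'\), as required.

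I expect the main obstacle to be exactly this rank-preservation induction: the delicate bookkeeping needed to confirm that the behavior of the lifted forcing \(j(\mathbb{P})\) at the critical point \(\kappa\) is governed entirely by its restriction below \(\kappa\) (the highly closed tail being inert on \(\her{\kappa}\)), that the admissibility of \(\kappa\) as a club point—guaranteed by \(\gamma'<\beta\)—feeds cleanly into the inductive hypothesis, and that the representing function \(g\) transforms under \(j\) so as to supply precisely the threshold \(\beta\) at \(\kappa\). A secondary but essential point is the fatness of \(S\), for which the observation that high-rank points are inaccessible is the crux. Finally, for strongly Ramsey and super Ramsey cardinals the same construction works and is in fact simpler: one replaces weak \(\kappa\)-models by \(\kappa\)-models (respectively \(\kappa\)-models elementary in \(\her{\kappa}\)), dispenses with certified measures and the \(\omega_1\)-intersecting bookkeeping, and builds the \(N_U\)-generic directly, so the upper- and lower-bound arguments transfer verbatim with \(\oSR\) or \(\oSuR\) in place of \(\oR\).
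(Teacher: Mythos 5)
Your upper bound and your remarks about fatness, distributivity, and cofinality preservation are essentially right and mirror the paper, but the heart of your lower-bound argument—the lifting—fails, and it fails for a reason that dictates the actual shape of the paper's proof. You force with a \emph{single} club shooting \(\mathbb{P}\) at \(\kappa\) and propose to lift \(j\colon M_U\to N_U\) by ``threading \(C\cup\{\kappa\}\) as the initial segment of the new club,'' using Lemma~\ref{le:diag2} to build an \(N_U\)-generic filter for \(j(\mathbb{P})\) containing \(j\image G\). But \(j(\mathbb{P})\) is a poset \emph{of} \(N_U\): its conditions are closed bounded subsets of \(j(\kappa)\) that are \emph{elements of} \(N_U\), and \(C\cup\{\kappa\}\notin N_U\), because \(C=\bigcup G\) is generic over \(V\supseteq N_U\) (since \(\oR(\kappa)>\beta\) in \(V\), a reflection argument shows the complement \(\{\delta<\kappa\mid\oR(\delta)\geq g(\delta)\}\) is stationary in \(V\), so every club through \(S\) is genuinely new). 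Worse, no \(N_U\)-generic filter \(H\) for \(j(\mathbb{P})\) can contain \(j\image G=G\) at all: by elementarity the set of conditions with supremum above \(\kappa\) is a dense subset of \(j(\mathbb{P})\) lying in \(N_U\); any two conditions in a filter are comparable under end-extension, so such an \(H\) contains some \(p\) with \(\sup p>\kappa\), which freezes \(\bigl(\bigcup H\bigr)\cap\kappa=p\cap\kappa\in N_U\), while containing \(G\) forces \(\bigl(\bigcup H\bigr)\cap\kappa=C\notin V\). So there is not a single condition of \(j(\mathbb{P})\) compatible with your would-be master condition, and the diagonalization machinery never gets off the ground.

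This obstruction is precisely why the paper does not force with one club but with the Easton-support iteration \(\p=\p_\kappa*\dot\q_\kappa\), shooting such clubs at every inaccessible \(\gamma\leq\kappa\). Then \(h(\p_\kappa)\cong\p_\kappa*\dot\q_\kappa*\dot\p_\tail\), the \(V\)-generic \(G*g\) is reused for the first two factors, Lemma~\ref{le:diag2} handles the \(\lesseq\kappa\)-distributive tail, and only \emph{then}, in the model \(N[h(G)]\)—which now contains \(C=\bigcup g\) by construction—does \(\overline{C}=C\cup\{\kappa\}\) make sense as a master condition for the final step \(h(\q_\kappa)\). Two further ingredients, absent from your sketch, then become essential. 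First, \(\overline{C}\) is a condition only if \(\oR(\kappa)^{N[h(G)]}<\beta\): the rank must be checked in the \emph{prepared extension} of \(N\), not in \(N\) itself (your check ``\(N_U\models\oR(\kappa)=\gamma'<\beta\)'' is in the wrong model once a preparation is present, and forcing could a priori raise the rank). The paper settles this with Theorem~\ref{th:rankDecreasesRamsey}—the entire cover-and-approximation section exists for this step—using that \(h(\p_\kappa)\) has a closure point at the first inaccessible and is countably closed. Second, the strongly and super Ramsey cases need the same iteration together with Theorem~\ref{th:rankDecreasesStrongRamsey}; your plan to ``build the \(N_U\)-generic directly'' there runs into the identical obstruction.
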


\begin{proof}
Suppose $\oR(\kappa)=\alpha$ and fix $\beta<\alpha$.
Since $\beta<\kappa^+$, we can fix some well-ordering $E$ of $\kappa$ in
order-type $\beta$ and let $g^E:\kappa\to \kappa$ be a representing function for
$\beta$ (see the discussion preceding Theorem~\ref{th:rankMeasures}; if \(\beta<\kappa\) we
can let \(\beta\) be represented by a constant function and omit \(E\) and \(g^E\) from the
following argument).

Let $\p_\kappa$ be the $\kappa$-length Easton support iteration, forcing at each inaccessible
$\gamma$ with $\q_\gamma$ to shoot a club, by closed initial segments, through the set of
cardinals $\delta<\gamma$ with $\oR(\delta)<g^E(\delta)$, and using trivial forcing
everywhere else. It is easy to see that each $\q_\gamma$ is $\lt\gamma$-strategically closed.
Fixing $\beta<\gamma$, the strategy to ensure that the union of a $\beta$-sequence of
conditions in $\q_\gamma$ with the supremum added on is itself a condition in $\q_\gamma$ is
to make sure that the supremum gets above $\beta$. This ensures that the supremum is not
inaccessible and so trivially has the property $\oR(\delta)<g^E(\delta)$. The forcing we
shall use to achieve our goal is $\p=\p_\kappa*\dot\q_\kappa$. This poset preserves all
cardinals and cofinalities, since each $\dot\q_\gamma$ is $\lt\gamma$-strategically closed in
$V^{\p_\gamma}$. Let $G*g\subseteq \p$ be $V$-generic.

The iteration $\p_\kappa$ has size $\kappa$ and the $\kappa$-cc
(cf.~\cite{cummings:handbook}) and elements of $\dot\q_\kappa$ are names for bounded subsets
of $\kappa$. Since each such name can be associated with a bounded subset of $\kappa$ by a
nice-name argument, we can assume that $\p\subseteq V_\kappa$. This means in particular that
every $A\subseteq\kappa$ in $V[G][g]$ has a $\p$-name $\dot A$ in $\her{\kappa}$ and so $A$
is an element of every model $M[G][g]$ where $M$ is a weak $\kappa$-model in $V$ containing
$\p$ and $\dot A$. The following claim will show that the rank of $\kappa$ in $V[G][g]$ is
still at least $\beta$.

\begin{claim}
If $\M\models\ZFC$ is a weak $\kappa$-model such that $V_\kappa,\beta,E\in \M$ and $\oR(\kappa)^{\M}<\beta$, then $\oR(\kappa)^{\M[G][g]}\geq\oR(\kappa)^{\M}$.
\end{claim}

\begin{proof}
Note that $\p\in \M$ since it is definable from $V_\kappa$ and $E$. We shall argue by
induction on $\xi<\beta$ that if $\M$ is as in the hypothesis and $\oR(\kappa)^{\M}=\xi$,
then $\oR(\kappa)^{{\M}[G][g]}\geq \xi$. So suppose inductively that the statement holds for
all $\eta<\xi$. Fix some $\M$ as in the hypothesis and suppose that $\oR(\kappa)^{\M}=\xi$.
For $A\subseteq\kappa$ in $\M[G][g]$ and $\eta<\xi$, we need to produce an $A$-good Ramsey
measure $W$ such that $N_W\models\oR(\kappa)\geq\eta$.

Let $A\in \M[G][g]$ be a subset of $\kappa$ and choose a $\p$-name $\dot{A}\in \M$ for it.
Fix $\eta<\xi$. We work in $\M$. By Lemma~\ref{le:orderOmegaSpecial} we can find an
\(\{\dot{A},V_\kappa,\beta,E\}\)-good Ramsey measure $U$ such that $M_U$ is $\omega$-special
and $N_U\models\oR(\kappa)=\eta$. Let $M=M_U^*$ and $h:M\to N$ be the ultrapower by $U$. Note
that $M\models\oR(\kappa)=\eta$, and so the inductive assumption applied to $M$ gives that
$M[G][g]\models\oR(\kappa)\geq\eta$. We shall lift $h$ to $M[G][g]$ in $\M[G][g]$ and argue
that the $M[G][g]$-ultrafilter by which the lift is the ultrapower is the desired $W$.

First, we lift $h$ to $M[G]$. To do this we need to find an $N$-generic filter for
$h(\p_\kappa)\cong\p_\kappa*\dot \q_\kappa*\dot\p_\tail$ containing $h\image G=G$. We will
use the filter $G*g$ for the $\p_\kappa*\dot\q_\kappa$ part of $h(\p_\kappa)$. Note that
$\p_\tail=(\dot\p_\tail)_{G*g}$ is $\lesseq\kappa$-strategically closed and hence
$\lesseq\kappa$-distributive in $N[G][g]$. Thus, by Lemma~\ref{le:diag2}, $\M[G][g]$ has an
$N[G][g]$-generic for $G_\tail$ for $\p_\tail$, and so we can lift $h$ to
$h:M[G]\to N[h(G)]$, where $h(G)=G*g*G_\tail$.

Next, we lift $h$ fully to $M[G][g]$ by finding an $N[h(G)]$-generic filter for
$h(\q_\kappa)$, where $\q_\kappa=(\dot\q_\kappa)_G$, containing $h\image g$. Let $C=\Union g$
and $\overline C=C\union \{\kappa\}$, which is in $N[h(G)]$ by our choice of $h(G)$. Note
that $\overline C$ is a closed bounded subset of $h(\kappa)$. If we can show that
$\overline C$ is a condition in $h(\q_\kappa)$, then we can use it as a master condition for
the lift and use Lemma~\ref{le:diag2} to find an $N[h(G)]$-generic filter $g^*$ for
$h(\q_\kappa)$ containing $\overline C$. The only reason it might not be the case that
$\overline C$ is an element of $\q_\kappa$ is that $\oR(\delta)^{N[h(G)]}\geq g^E(\delta)$
for some \(\delta\in C\) or $\oR(\kappa)^{N[h(G)]}\geq h(g^E)(\kappa)=\beta$.

The first option cannot occur, since otherwise we would get
$\oR(\delta)^{M[G]}\geq g^E(\delta)$ by elementarity. This contradicts the construction of
\(C\) which is a club of ordinals \(\delta\) satisfying \(\oR(\delta)^{M[G]}<g^E(\delta)\).

To see that the second option above also cannot occur, observe that the forcing
$h(\p_\kappa)$ has a closure point at the first inaccessible cardinal $\delta_0$: the first
non-trivial forcing happens at stage $\delta_0$ and has size $\delta_0$ and each
$\dot\q_\delta$ for $\delta>\delta_0$ is $\lesseq\delta_0$-strategically closed in
$V^{\p_\delta}$, from which it will follow that the remainder of the iteration is
$\lesseq\delta_0$-strategically closed. By Theorem~\ref{th:gapcoverapprox} the pair
$N\subseteq N[h(G)]$ has the $\delta_0^+$-cover and $\delta_0^+$-approximation properties.
Also $h(\p_\kappa)$ is clearly countably closed, meaning that $N^\omega\subseteq N$ in
$N[h(G)]$. Following our assumptions, $N\models\oR(\kappa)=\eta$ and hence
Theorem~\ref{th:rankDecreasesRamsey} yields that $N[h(G)]\models\oR(\kappa)\leq \eta<\beta$. This
completes the argument that \(\overline C\) is a condition in \(h(\q_\kappa)\), allowing us
to lift $h$ to $h:M[G][g]\to N[h(G)][g^*]$, where $g^*$ is obtained using
Lemma~\ref{le:diag2} below the master condition $\overline C$.

Next, we argue that the lift $h$ is the ultrapower by a weakly amenable
$\omega_1$-intersecting $M[G][g]$-ultrafilter $W$ in $\mathcal M[G][g]$. By
Lemma~\ref{le:ramseycountablyclosed}, since $\p$ is countably closed, $W$ is
$\omega_1$-intersecting. To conclude that $W$ is weakly amenable, we verify that $M[G][g]$
and $N[h(G)][g^*]$ have the same subsets of $\kappa$. Suppose $B$ is a subset of $\kappa$ in
$N[h(G)][g^*]$. Since $\p_\tail*h(\dot{\q}_\kappa)$ is $\leq\kappa$-distributive,
$B\in N[G][g]$ and so $B$ has a $\p$-name $\dot B\in N$, which we can take to be an element
of $\her{\kappa}^N=M$. So finally, $B\in M[G][g]$.

Recall that $M[G][g]\models\oR(\kappa)\geq\eta$, from which it follows that
$N_W\models\oR(\kappa)\geq\eta$ as well. This finishes the inductive argument and allows us
to conclude that $\M[G][g]\models\oR(\kappa)\geq \xi$.
\end{proof}

To see that $\oR(\kappa)^{V[G][g]}\leq\beta$, recall that
$C=\bigcup g$ is a club in $\kappa$, consisting of cardinals $\delta$
with $\oR(\delta)^{V[G]}<g^E(\delta)$. But since \(\q_\kappa\) is \(\ltkappa\)-distributive,
it also follows that \(\oR(\delta)^{V[G][g]}<g^E(\delta)\) for all \(\delta\in C\).
This means that in $V[G][g]$ there cannot be a
Ramsey measure $U$ with $C,E\in M_U$ and $N_U\models \oR(\kappa)\geq j(g^E)(\kappa)=\beta$.

Exactly the same argument would work to get the result for strongly and super Ramsey
cardinals, except that we would rely on Theorem~\ref{th:rankDecreasesStrongRamsey}.
\end{proof}

\bibliography{web}
\bibliographystyle{alpha}
\end{document}